\newcommand{\cellboxt}[1]{\begin{varwidth}[t]{0.5\textwidth}#1\end{varwidth}}
\newcommand{\xquad}{\hspace*{1em}\relax}
\newcommand{\xqquad}{\hspace*{2em}\relax}
\newcommand{\xqqquad}{\hspace*{3em}\relax}
\theoremstyle{theorem}
\newtheorem{theorem}{Theorem}[section]
\newtheorem{proposition}[theorem]{Proposition}
\numberwithin{equation}{section}
\begin{document}
\title
[Mathematical classification of square tiles]{A contribution for a mathematical \\
classification of square tiles}
\author{Jorge Rezende}
\address{Grupo de F\'{i}sica-Matem\'{a}tica da Universidade de Lisboa,
Av. Prof. Gama Pinto 2, 1649-003 Lisboa, Portugal,
and Departamento de Matem\'{a}tica,
Faculdade de Ci\^{e}ncias da Universidade de Lisboa}
\email{rezende@cii.fc.ul.pt}
\thanks
{The Mathematical Physics Group is supported by the (Portuguese) 
Foundation for Science and Technology (FCT)}
\thanks
{This paper is in final form and no version of it will be submitted for
publication elsewhere.}
\subjclass[2010]{05B45, 52C20, 00A66}
\maketitle

\section{Introduction}

\noindent
In this article we shall study some geometric properties of a non-trivial
square tile (a non-trivial square tile is a non-constant function on a
square).

Consider infinitely many copies of this single square tile and cover the
plane with them, without gaps and without overlaps (a tiling of the plane),
with the vertices making a square point lattice. The question we ask
ourselves in this article is the following: if there is a rotation center of
order $4$ what kind of geometric properties has the drawing in the tile?

\subsection{Trivial patterns}

In this article we represent rotation centers of order $4$ by small black
squares; those of order $2$ by small circles; reflections are represented by
red lines. We do not represent glide reflections. The border of fundamental
regions are represented by yellow lines.

\begin{figure}[h]
  \centering
  \includegraphics[width=3.3131in]{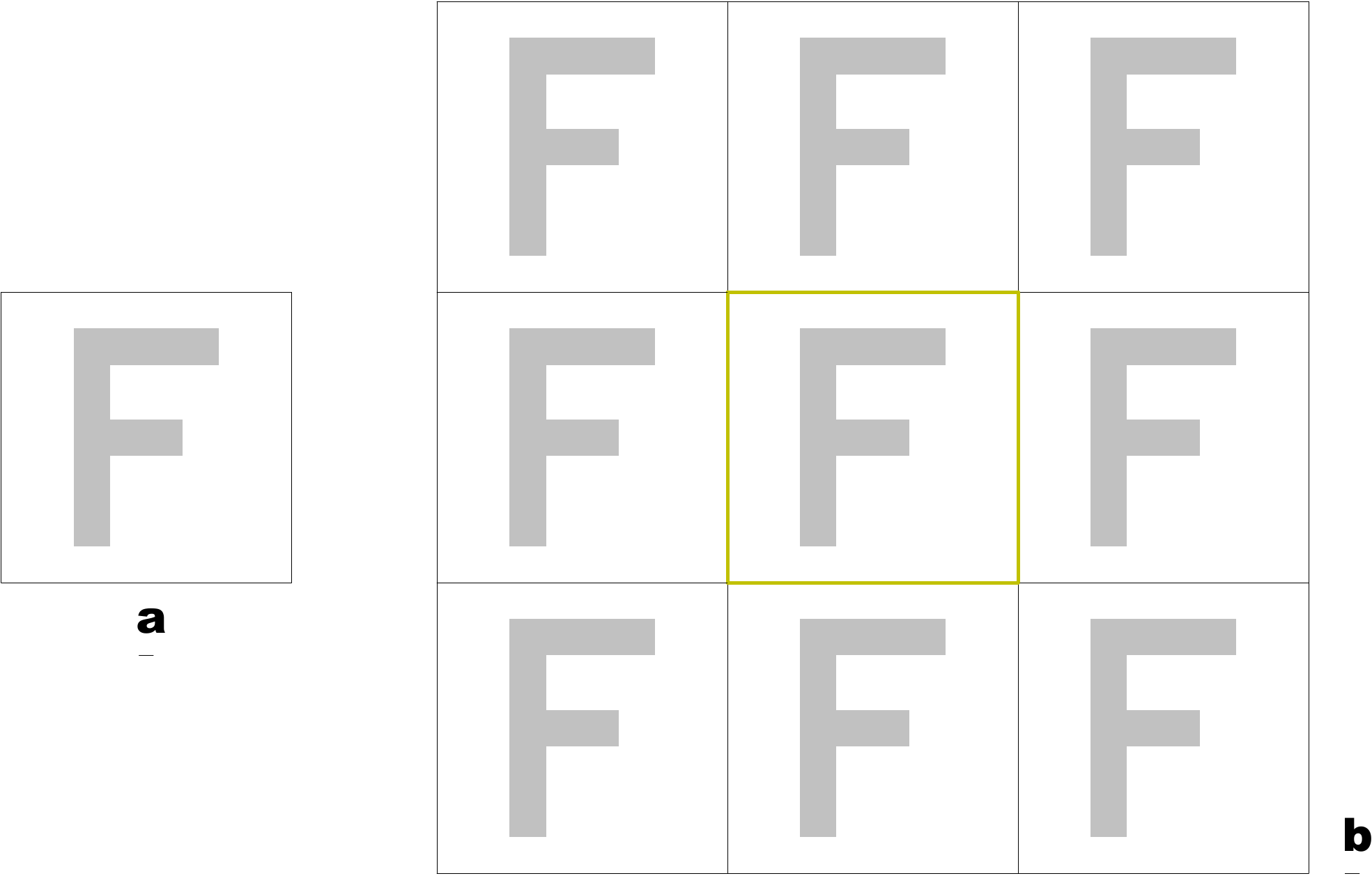}  
  \caption{}
  \label{fig:01}
\end{figure}

\begin{figure}[h]
  \centering
  \includegraphics[width=3.6789in]{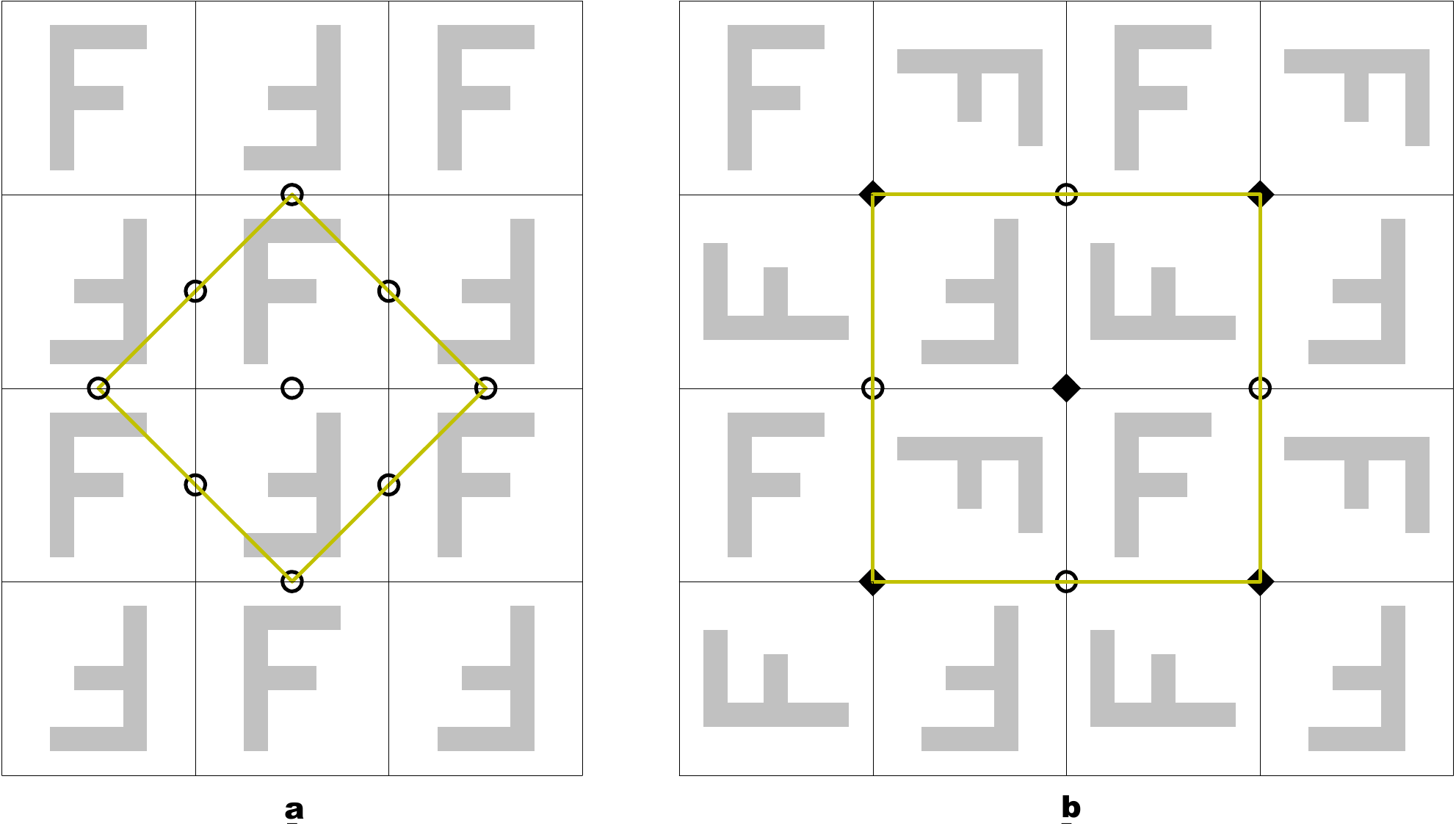}  
  \caption{}
  \label{fig:02}
\end{figure}

Take a square tile like the one that is shown in Figure \ref{fig:01}a. If one
translates it (Figure \ref{fig:01}b), rotates it around a vertex 
(as in Figure \ref{fig:02}b) or around the center of one edge 
(as in Figure \ref{fig:02}a), one can obtain different
patterns. These are the trivial ones and these operations (translations,
rotations around a vertex or, of order $2$, around the center of one edge)
are also called trivial.

In the trivial patterns the rotation centers of order $4$ can only be in the
vertices, and the rotation centers of order $2$ can only be in the vertices
or in the center of the edges; these are the trivial centers.

All compositions with a 1968 tile by Querubim Lapa (see page 31, in
\cite{Almeida}) have rotation centers of order $2$ and $4$ that are
obtained in this way.

However, a 1966 Eduardo Nery tile 
(Figures \ref{fig:03} and \ref{fig:28}--\ref{fig:30}) 
can help us to see
that there are tilings of the plane with infinitely many copies of a single
and ``simple'' tile that are not trivial.

\begin{figure}[h]
  \centering
  \includegraphics[width=4.139in]{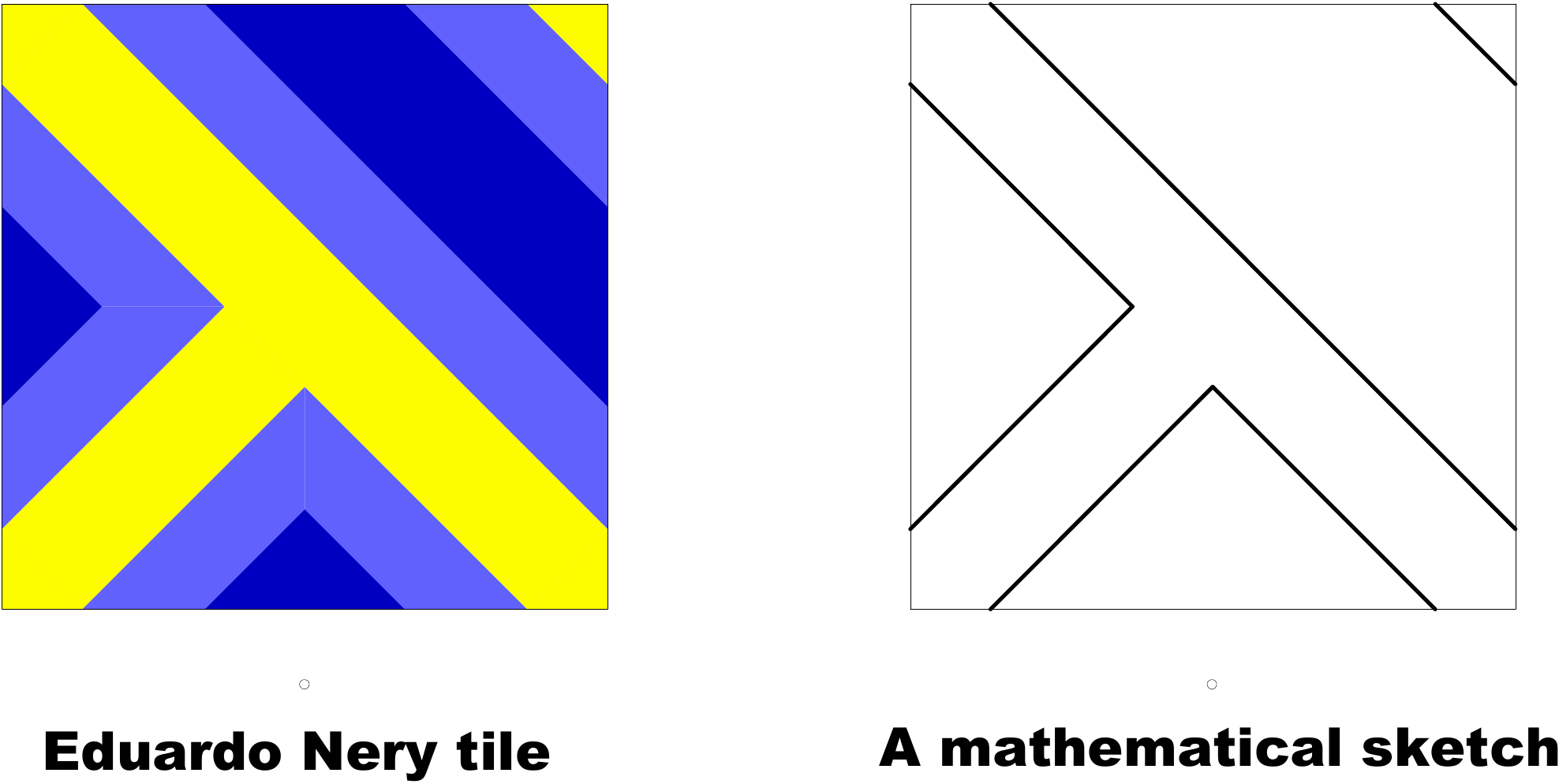}
  \caption{}
  \label{fig:03}
\end{figure}

\subsection{A Eduardo Nery tile}

In 1966, the Portuguese artist Eduardo Nery designed a pattern tile,
with remarkable properties (see Figures \ref{fig:03} and
\ref{fig:28}--\ref{fig:30}).  It can be seen in several locations in
Portugal (see References \cite{Almeida}, \cite{Nery1} and \cite
{Nery2}) such as:

\begin{compactenum}[a)]
\item the agency of the former Nacional Ultramarino Bank, Torres
  Vedras (1971--1972);
\item the M\'{e}rtola Health Centre (1981);
\item the Contumil Railway Station, Oporto (65 panels in three passenger
platforms, 1992--1994).
\end{compactenum}

\begin{figure}[b]
  \centering
  \includegraphics[width=4.6622in]{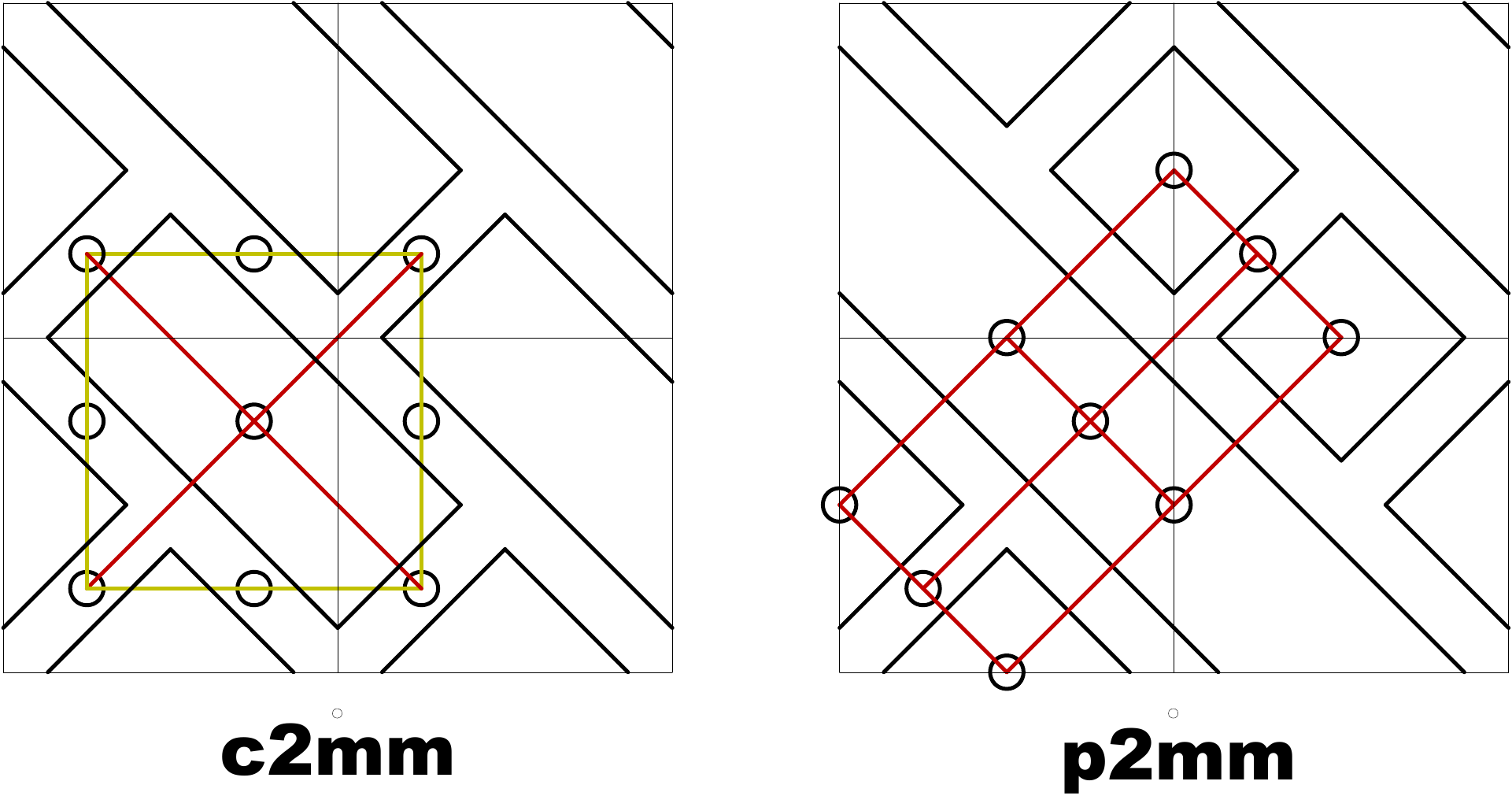}
  \vspace*{-18pt}
  \caption{}
  \label{fig:04}
\end{figure}
\begin{figure}[b]
  \centering
  \vspace*{2pt}
  \includegraphics[width=3.8372in]{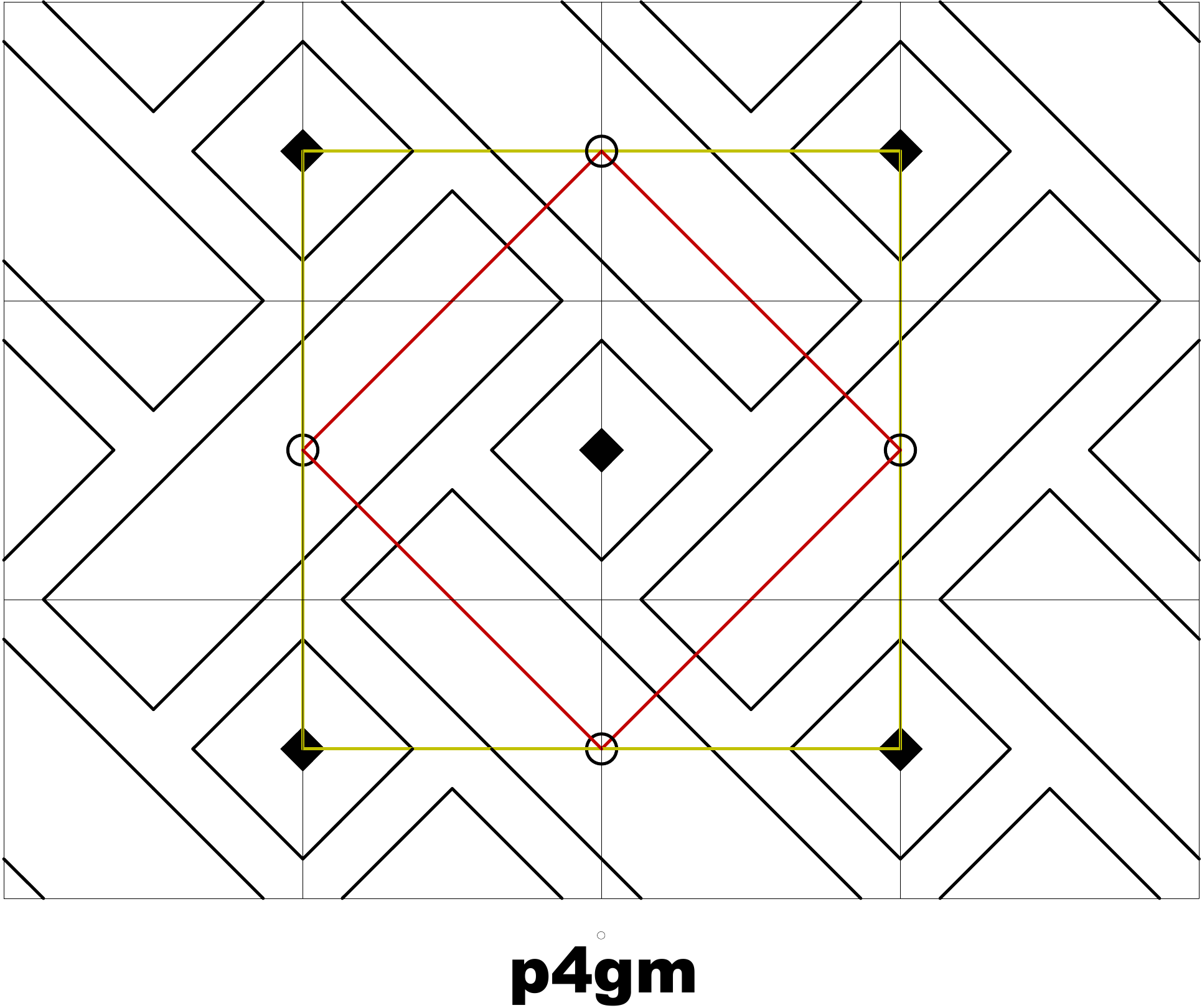}  
  \vspace*{-8pt}
  \caption{}
  \label{fig:05}
\end{figure}

If one assembles four of these tiles around a corner making a square and
then translates it parallelly to the edges, covering the plane, one obtains
eleven ($11$) different tilings with symmetry by reflexion and twelve 
$\left( 12=6\times 2\right) $ without it. In the eleven ($11$) tilings are
represented eight ($8$) wallpaper groups: $pm$ ($1$), $p2mm$ ($1$), $p2mg$ 
($1$), $p2gg$ ($1$), $cm$ ($3$), $c2mm$ ($1$), $p4mm$ ($1$) and $p4gm$ ($2$).
In the twelve ($12$) tilings without symmetry by reflexion are represented
two wallpaper groups: $p1$ ($10$) and $p2$ ($2$). The meaning of these
notations on wallpaper groups can be seen in References \cite{Armstrong}, 
\cite{rezende1}, \cite{Schattschneider}.

Three of these tilings are represented in Figures \ref{fig:04} and \ref{fig:05}.
They show
rotations centers of order $2$ in the interior of the tiles and rotation
centers of order $4$ in the middle of some edges. This means that they are
not trivial. In the Appendix, Figures \ref{fig:28}--\ref{fig:30} 
show the same patterns in coloured tilings.

Figure \ref{fig:06} represents all the symmetries of this Eduardo Nery tile.
\vspace*{-6pt}
\begin{figure}[H]
  \centering
  \includegraphics[width=2.5036in]{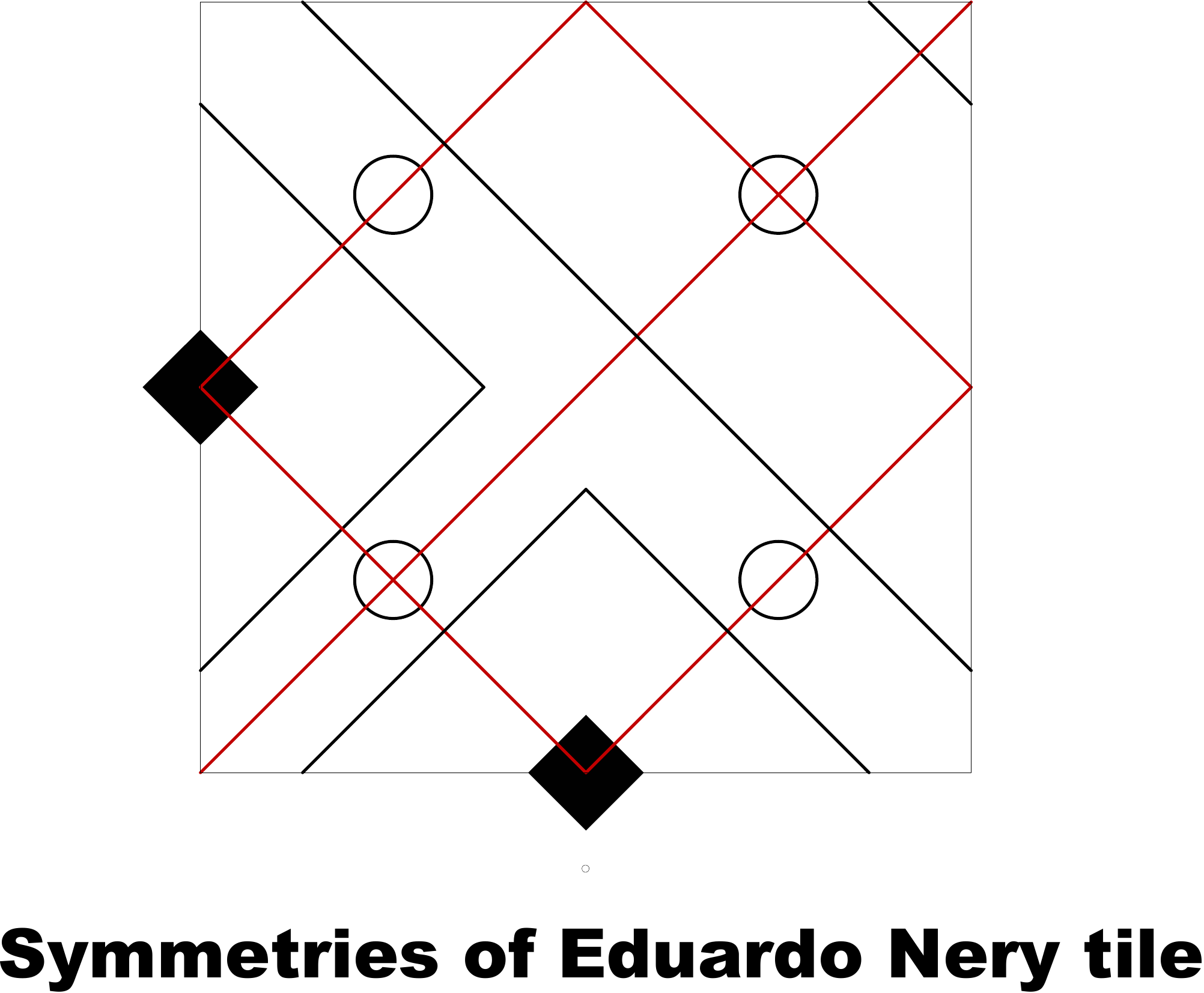}
  \vspace*{-12pt}
  \caption{}
  \label{fig:06}
\end{figure}

These interesting three examples of non trivial tilings of the plane 
(in particular, the one of Figures \ref{fig:05} and \ref{fig:30})
encouraged this search for an
answer to the question that we asked ourselves in the beginning: if there is
a rotation center of order $4$ what kind of geometric properties has the
drawing in the tile?

\section{The general rules and their exceptions}

In the following, remember that we represent rotation centers of order $4$
by small black squares; those of order $2$ by small circles; reflections are
represented by red lines. We do not represent glide reflections. The border
of fundamental regions are represented by yellow lines.

Consider a $p4$ pattern on the plane ($\mathbb{R}^2$, the referential of the
pattern) with rotation centers of order $4$ located in the points $\left(
r,s\right) \in \mathbb{Z}^2$ ($r$ and $s$ are integer numbers).

Let $\left( p,q\right) ,\left( r,s\right) \in \mathbb{Z}^2$. Consider the
coordinate system defined by the origin $\left( 0,0\right) $ and the vectors 
$u=\left( p,q\right) $, $v=\left( -q,p\right) $. Then
\begin{align*}
\left( 1,0\right) &=\frac 1{p^2+q^2}\left( pu-qv\right) \text{,}\displaybreak[0]\\
\left( 0,1\right) &=\frac 1{p^2+q^2}\left( qu+pv\right) \text{.}
\end{align*}

Hence, in the referential with $\left( 0,0\right) $ as origin and $u$ and $v$
as vectors (the referential of the tile; see Figure \ref{fig:07}), 
the point $\left(
r,s\right) \in \mathbb{Z}^2$, in the referential of the pattern, has
coordinates 
\begin{equation*}
\left( r,s\right) \rightarrow \frac 1{p^2+q^2}\left( rp+sq,sp-rq\right) 
\text{.}
\end{equation*}
\begin{figure}[H]
  \centering
  \includegraphics[width=2.4855in]{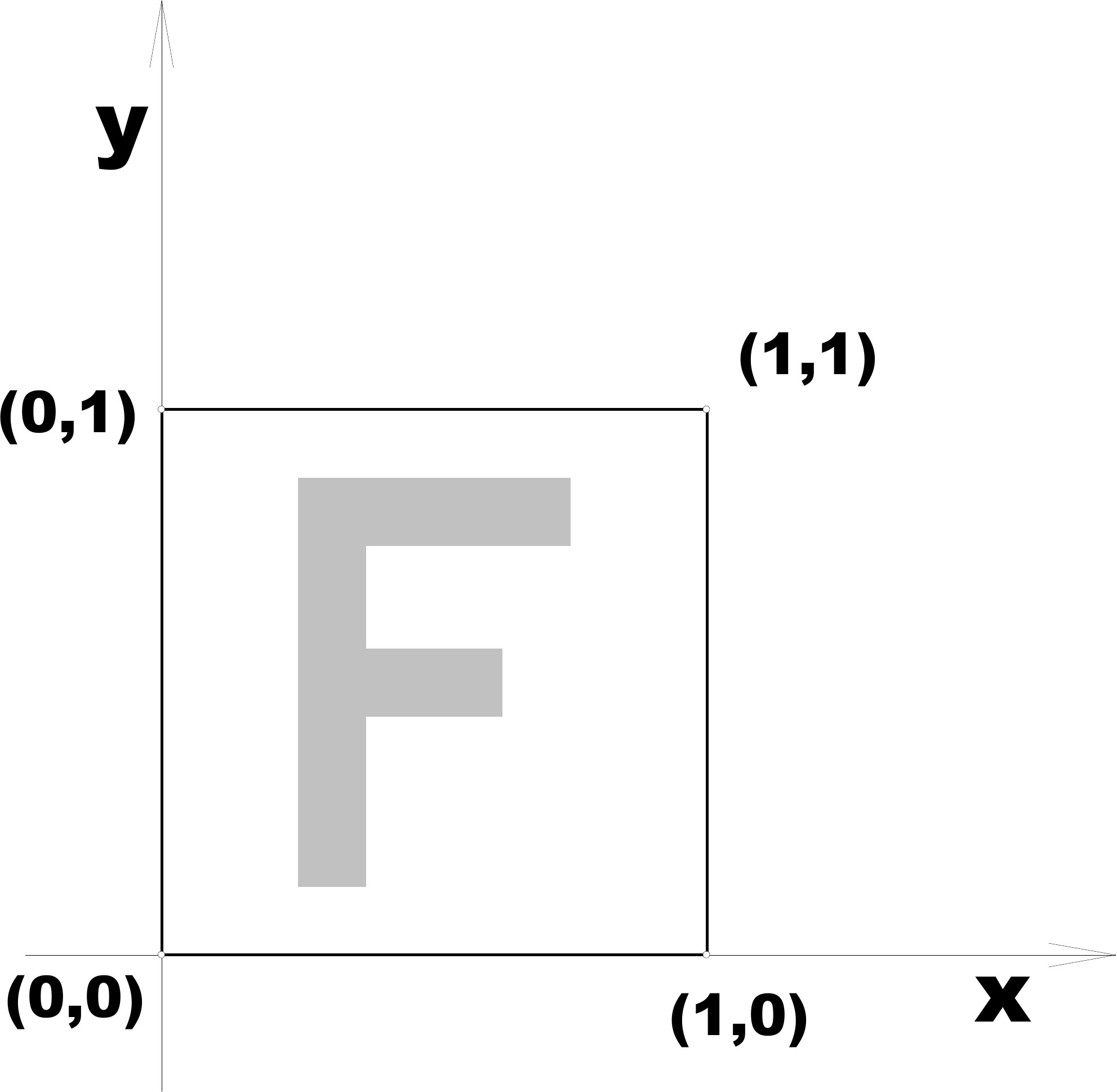}  
  \caption{}
  \label{fig:07}
\end{figure}


\textbf{In the following we shall always represent the domain of a tile as
the square }$\left[ 0,1\right]^2$ (see Figure \ref{fig:07}). 
In a tiling of the plane,
with infinitely many copies of this square tile, each tile is located in a
region 
$\left[ p,p+1\right] \times \left[ q,q+1\right] $, with $p,q\in \mathbb{Z}$. 
If no confusion is possible we use the domain to denote each tile.

The majority of the tiles that one can see are of one of \textbf{four types}. 
However there are \textbf{eight exceptions} to these general rules.

\subsection{The first type of general tiles}

Take $p$ and $q$ both even or both odd numbers ($p+q$ is even) and the
square with vertices in $\left( 0,0\right) $, $\left( p,q\right) $, $\left(
-q,p\right) $, $\left( p-q,p+q\right) $, in the referential of the pattern,
or \textit{any translation} of this square. A tile of this type is obtained
by intersecting this square with the pattern in the plane.

The rotation centers of order $4$ are located, in the referential of the
tile, at 
\begin{equation*}
\left( a_0,b_0\right) +\frac 1{p^2+q^2}\left( rp+sq,sp-rq\right) \text{,}
\end{equation*}
for some $\left( a_0,b_0\right) \in \left[ 0,1\right]^2$, and with $r,s\in 
\mathbb{Z}$.

Define $p_1=\frac{p-q}2,q_1=\frac{p+q}2$; then $p=p_1+q_1$ and $q=q_1-p_1$.
Notice that for any $p_1,q_1\in \mathbb{Z}$, $p_1+q_1$ and $q_1-p_1$ are both
even or both odd numbers. Then, the rotation centers of order $4$ are
located, in the referential of the tile, at 
\begin{equation*}
\left( a_0,b_0\right) +\frac 1{2\left( p_1^2+q_1^2\right) }\left( r\left(
p_1+q_1\right) +s\left( q_1-p_1\right) ,s\left( p_1+q_1\right) +r\left(
p_1-q_1\right) \right) \text{,}
\end{equation*}
for some $\left( a_0,b_0\right) \in \left[ 0,1\right]^2$, and with $r,s\in 
\mathbb{Z}$.

The number of such rotation centers is an even number, $p^2+q^2=2\left(
p_1^2+q_1^2\right) $.

\begin{figure}[h]
  \centering
  \includegraphics[width=4.0136in]{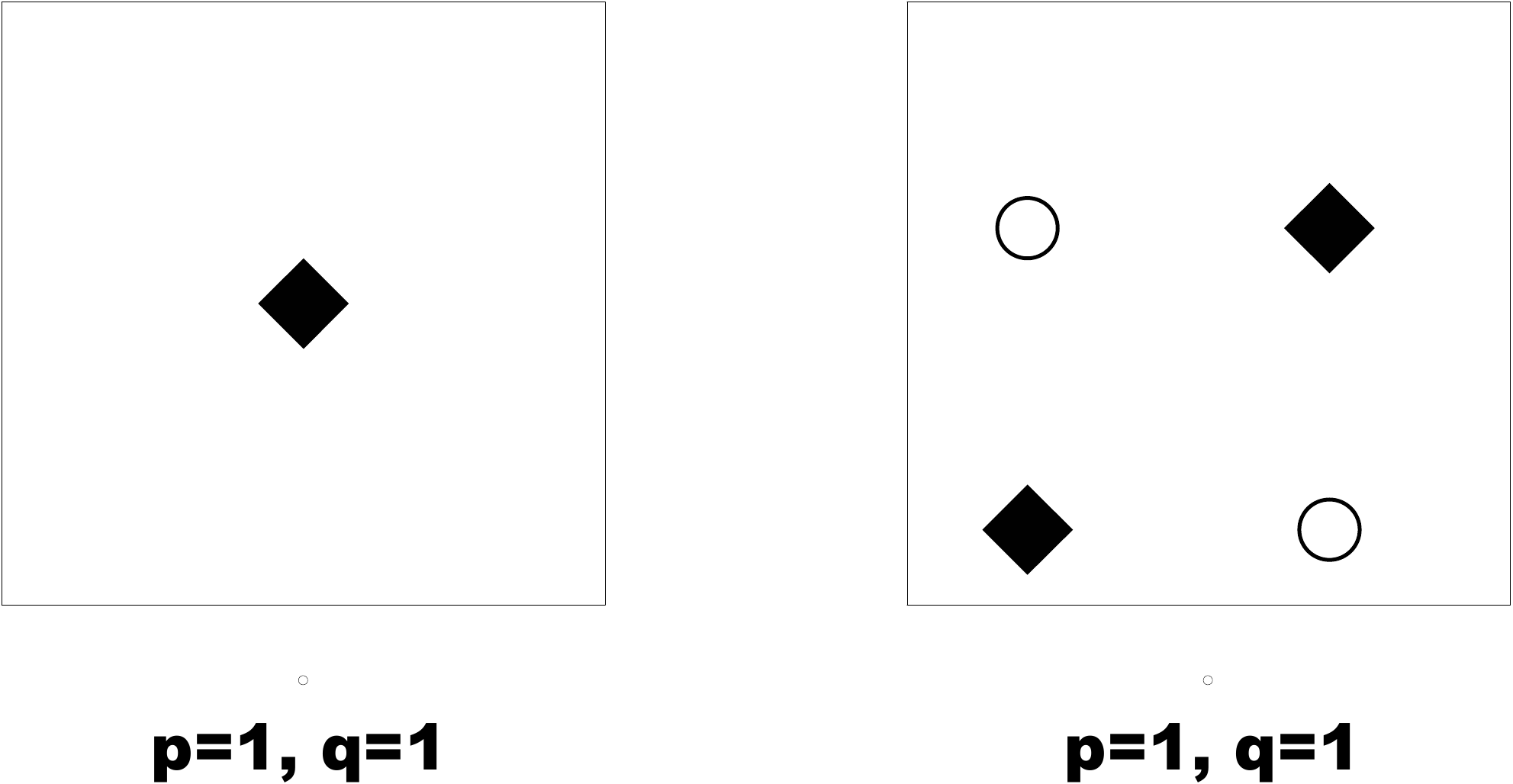}  
  \caption{}
  \label{fig:08}
\end{figure}

In Figure \ref{fig:08} we consider $p=q=1$ and represent two tiles. 
In the left one
there is a center at $\left( \sfrac 12,\sfrac 12\right) $ and the other
centers are trivial, located in the vertices. In the right one there is a
center at $\left( a_0,b_0\right) $ and the other at 
$\left( a_0+\sfrac 12,b_0+\sfrac 12\right) $. We only represent $p4$ versions of the tiles;
there are also $p4mm$ versions and $p4gm$ versions.

\begin{figure}[h]
  \centering
  \includegraphics[width=4.139in]{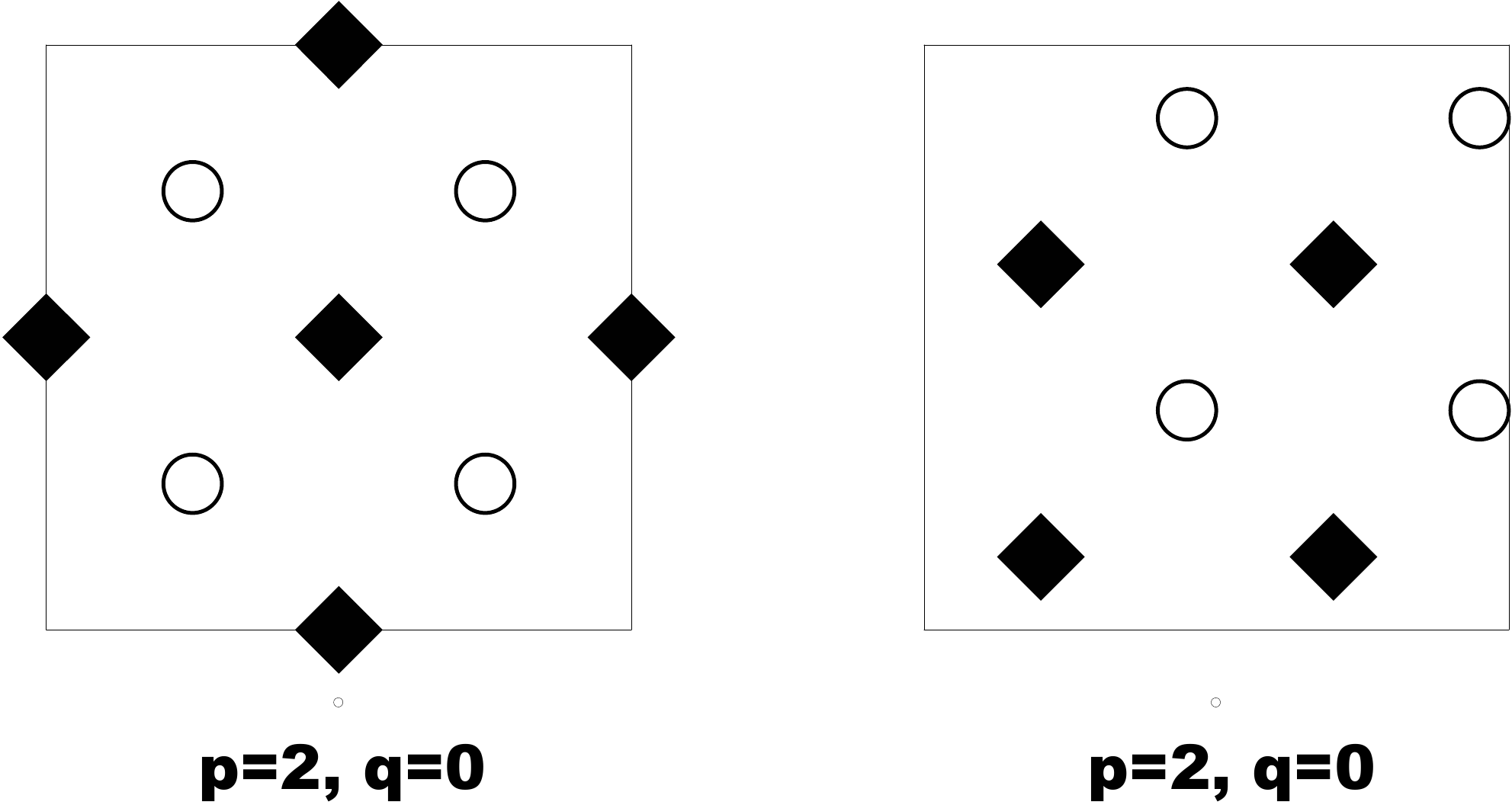}  
  \caption{}
  \label{fig:09}
\end{figure}

In Figure \ref{fig:09} we consider $p=2$ and $q=0$ and represent two tiles. 
In the left
one there are centers at $\left( \sfrac 12,0\right) $, $\left( 0,\sfrac
12\right) $, $\left( \sfrac 12,\sfrac 12\right) $, $\left( 1,\sfrac 12\right)$,
$\left( \sfrac 12,1\right) $ and the others are located in the vertices. In
the right one there is a center at $\left( a_0,b_0\right) $ and the others
at $\left( a_0+\sfrac 12,b_0\right) $, 
$\left( a_0+\sfrac 12,b_0+\sfrac 12\right) $ and $\left( a_0,b_0+\sfrac 12\right) $.
We only represent $p4$ versions of the tiles; there are also $p4mm$
versions and $p4gm$ versions.

\subsection{The second type of general tiles}

Take $p$ and $q$ such that if one of them is even the other is odd ($p+q$ is
odd) and take the square with vertices in $\left( 0,0\right) $, $\left(
p,q\right) $, $\left( -q,p\right) $, $\left( p-q,p+q\right) $, in the
referential of the pattern. A tile of this second type is obtained by
intersecting this square with the pattern in the plane.

The rotation centers of order $4$ are located, in the referential of the
tile, at 
\begin{equation*}
\frac 1{p^2+q^2}\left( rp+sq,sp-rq\right) \text{,}
\end{equation*}
with $r,s\in \mathbb{Z}$.

The number of such rotation centers is a odd number, $p^2+q^2$.

\begin{figure}[h]
  \centering
  \includegraphics[width=4.0136in]{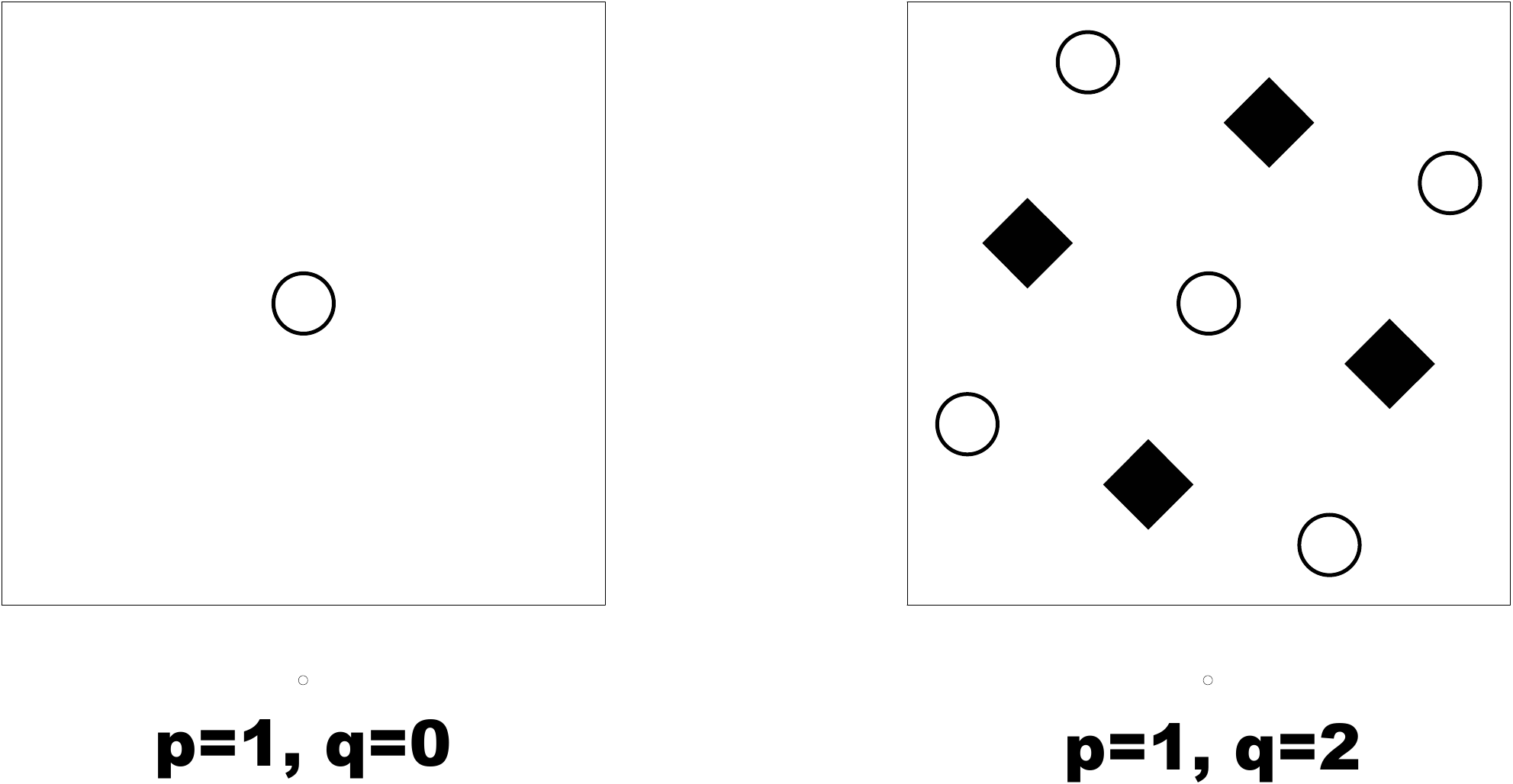}  
  \caption{}
  \label{fig:10}
\end{figure}

In Figure \ref{fig:10} we represent two tiles. 
In the left we consider $p=1$ and $q=0$; 
there are only centers located in the vertices. In the right we consider 
$p=1$ and $q=2$; there are centers at $\left( \sfrac 25,\sfrac 15\right) $, 
$\left( \sfrac 45,\sfrac 25\right) $, $\left( \sfrac 35,\sfrac 45\right) $, 
$\left( \sfrac 15,\sfrac 35\right) $, and the others are located in the
vertices. We only represent $p4$ versions of the tiles; there are also $p4mm$
versions and $p4gm$ versions. We can also consider interchanging $p$ and $q$; 
the figures are the same but seen as in a mirror.

\subsection{The third type of general tiles}

Take $p$ and $q$ such that if one of them is even the other is odd ($p+q$ is
odd) and take the square with vertices in $\left( -\sfrac p2,-\sfrac q2\right) 
$, $\left( \sfrac p2,\sfrac q2\right) $, $\left( \sfrac p2-q,p+\sfrac q2\right) $,
$\left( -\sfrac p2-q,p-\sfrac q2\right) $, in the referential of the
pattern. A tile of this third type is obtained by intersecting this square
with the pattern in the plane.

The rotation centers of order $4$ are located, in the referential of the
tile, at 
\begin{equation*}
\left( a,b\right) +\frac 1{p^2+q^2}\left( rp+sq,sp-rq\right) \text{,}
\end{equation*}
with $r,s\in \mathbb{Z}$, $\left( a,b\right) =\left( \frac 12,0\right) $ or 
$\left( a,b\right) =\left( 0,\frac 12\right) $.

The number of such rotation centers is a odd number, $p^2+q^2$.

\begin{figure}[h]
  \centering
  \includegraphics[width=4.0136in]{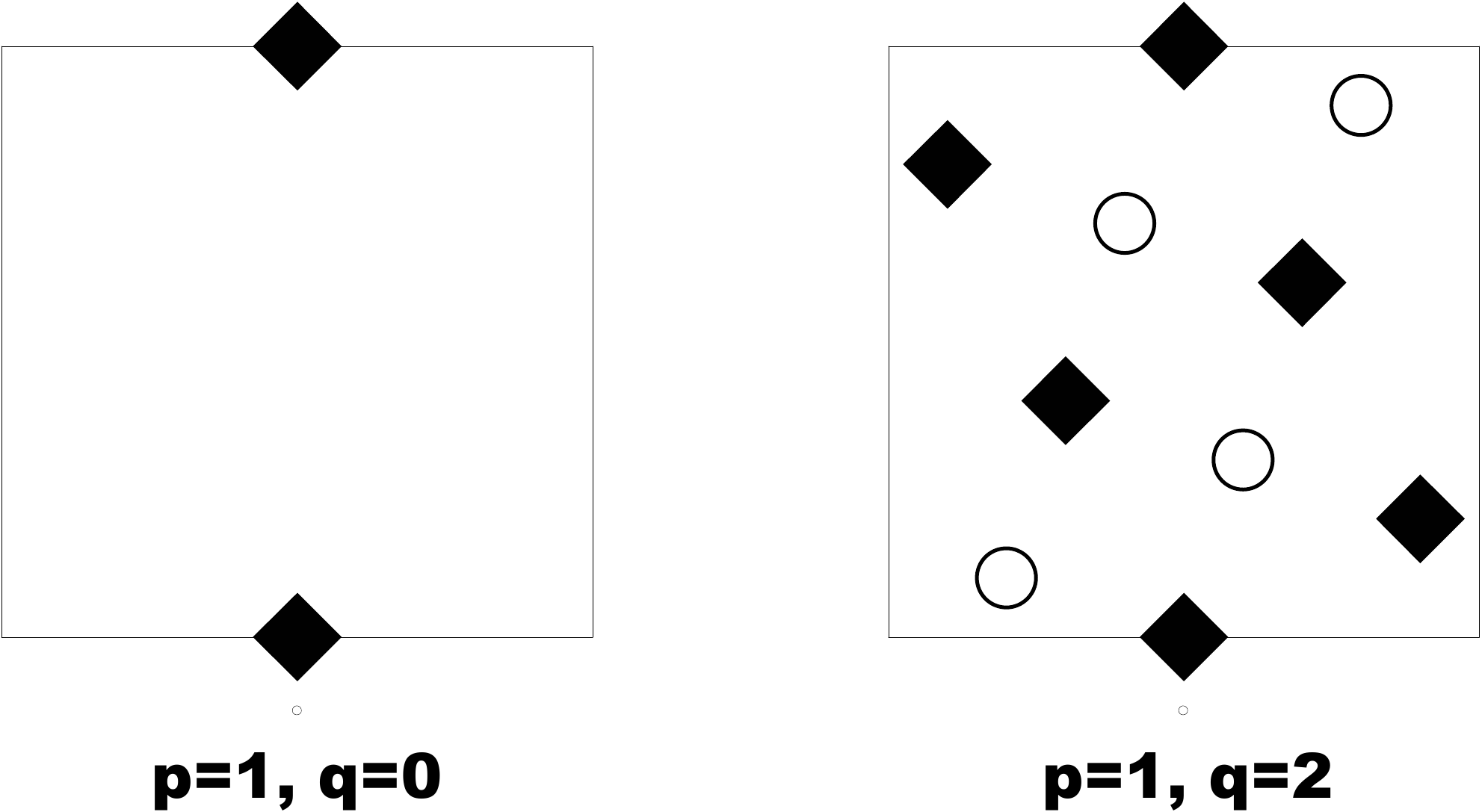}  
  \caption{}
  \label{fig:11}
\end{figure}

In Figure \ref{fig:11} we represent two tiles. 
In the left we consider $p=1$ and $q=0$; 
there are centers located at $\left( \sfrac 12,0\right) $ and 
$\left( \sfrac 12,1\right) $. In the right we consider $p=1$ and $q=2$; there are centers
at $\left( \sfrac 12,0\right) $, $\left( \sfrac 12,1\right) $, 
$\left( \sfrac 3{10},\sfrac 25\right) $, $\left( \sfrac 9{10},\sfrac 15\right) $,
$\left(\sfrac 7{10},\sfrac 35\right) $ and $\left( \sfrac 3{10},\sfrac 25\right) $. We
only represent $p4$ versions of the tiles; there are also $p4mm$ versions
and $p4gm$ versions. We can also consider interchanging $p$ and $q$; the
figures are the same but seen as in a mirror.

\subsection{The fourth type of general tiles}

Take $p$ and $q$ both odd numbers and the square with vertices in $\left(
0,0\right) $, $\frac 12\left( p,q\right) $, $\frac 12\left( p-q,p+q\right) $, 
$\frac 12\left( -q,p\right) $, in the referential of the pattern. A tile
of this type is obtained by intersecting this square with the pattern in the
plane.

The rotation centers of order $4$ are located, in the referential of the
tile, at 
\begin{equation*}
\left( a,b\right) +\frac 2{p^2+q^2}\left( rp+sq,sp-rq\right) \text{,}
\end{equation*}
with $r,s\in \mathbb{Z}$, $\left( a,b\right) =\left( 0,0\right) $ or $\left(
a,b\right) =\left( 1,0\right) $, indifferently. Notice that if there is a
rotation center at $\left( 0,0\right) $, then there is another one at 
$\left( 1,1\right) $; the same happens with $\left( 1,0\right) $ and $\left(
0,1\right) $.

The number of such rotation centers is not an integer; it is 
$\sfrac{(p^2+q^2)}4 $, where $p^2+q^2$ is even.

\begin{figure}[h]
  \centering
  \includegraphics[width=4.0136in]{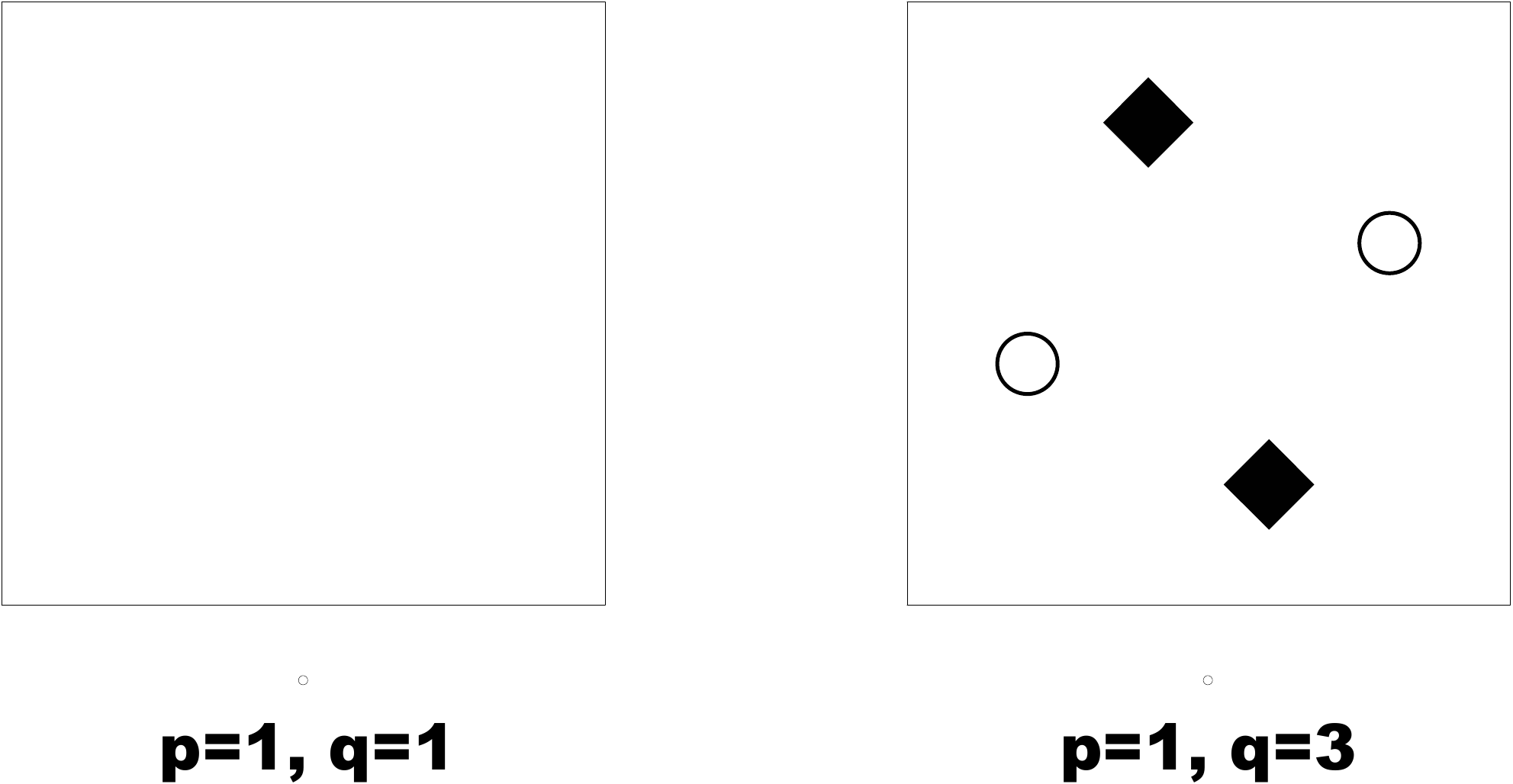}  
  \caption{}
  \label{fig:12}
\end{figure}

In Figure \ref{fig:12} we represent two tiles. 
In the left we consider $p=q=1$; it is
a tile without symmetries. In the right we consider $p=1$ and $q=3$; there
are centers at $\left( \sfrac 35,\sfrac 15\right) $,
$\left( \sfrac 25,\sfrac 45\right) $, 
and the others are located in the vertices
$\left(0,0\right) $ and $\left( 1,1\right) $. We only represent $p4$ versions of
the tiles; there are also $p4mm$ versions and $p4gm$ versions. We can also
consider interchanging $p$ and $q$; the figures are the same but seen as in
a mirror.

\subsection{The exceptions}

There are eight exceptions to these four general rules. All of them have two
rotation centers of order $4$ in the middle of two edges.

In the first type of these tiles these two edges have a common vertex.
Assume that the rotation centers of order $4$ are at $\left( \sfrac
12,0\right) $ and $\left( 0,\sfrac 12\right) $. Then, there are also rotation
centers of order $2$ at $\left( \sfrac 14,\sfrac 14\right) $, $\left( \sfrac
34,\sfrac 14\right) $,$\left( \sfrac 34,\sfrac 34\right) $ and $\left( \sfrac
14,\sfrac 34\right) $. This tile with reflections, has the diagonal that
contains $\left( 0,0\right) $ a reflection line ($p4gm$). This is exactly
the case of Eduardo Nery tile. See Figure \ref{fig:13}.

\begin{figure}[h]
  \centering
  \includegraphics[width=4.0975in]{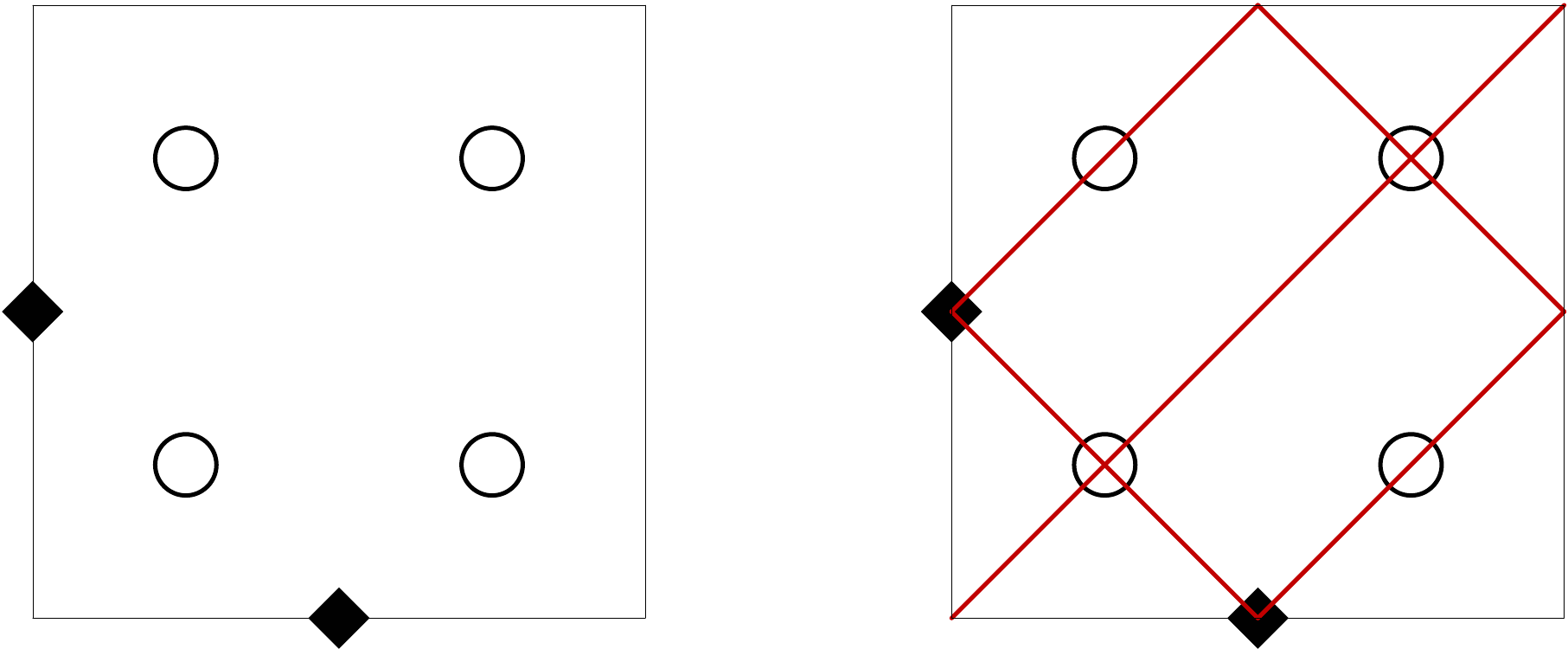}  
  \caption{}
  \label{fig:13}
\end{figure}

In the second and third types of these tiles these two edges have no common
vertex. Assume that the rotation centers of order $4$ are at $\left( \sfrac
12,0\right) $ and $\left( \sfrac 12,1\right) $.

In the second type there is translation invariance by the vectors 
$\left(\sfrac 12,\sfrac 12\right) $ and $\left( \sfrac 12,-\sfrac 12\right) $. This
tile with reflections, has a reflection line containing the points $\left(
\sfrac 12,0\right) $ and $\left( \sfrac 12,1\right) $ ($p4mm$), or a
reflection line containing the points $\left( 0,\sfrac 12\right) $ and 
$\left( 1,\sfrac 12\right) $ ($p4gm$). See Figure \ref{fig:14}.

\begin{figure}[h]
  \centering
  \includegraphics[width=5.5971in]{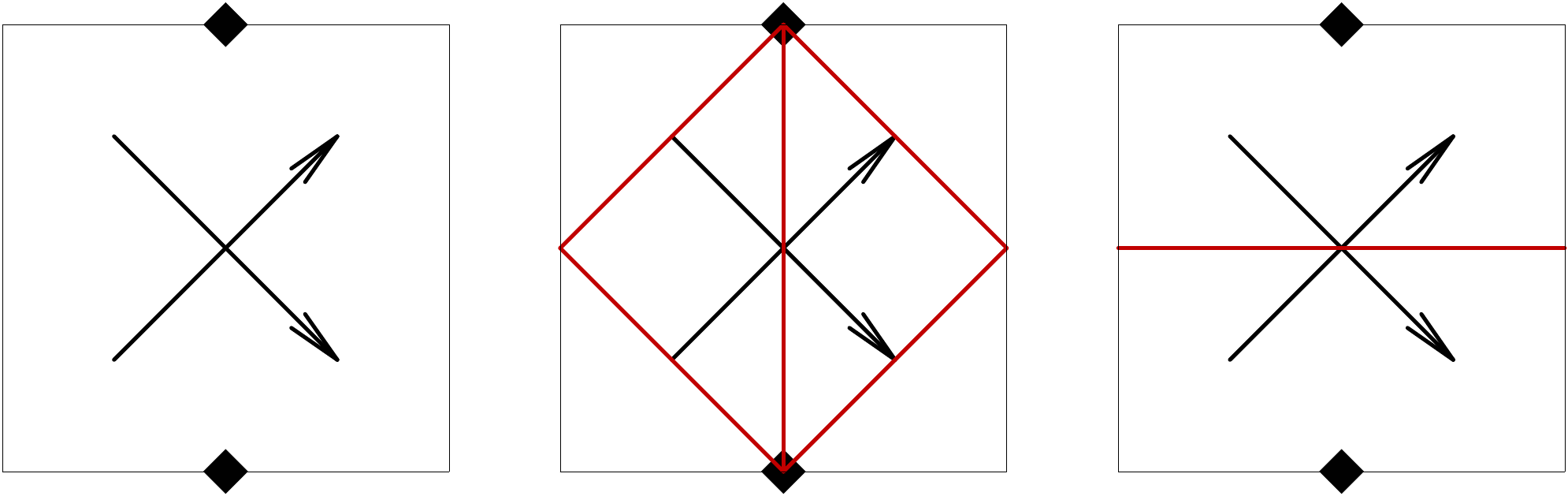}  
  \caption{}
  \label{fig:14}
\end{figure}

In the third type there is no translation invariance and one has rotation
centers of order $2$ at $\left( \sfrac 14,\sfrac 14\right) $, $\left( \sfrac
34,\sfrac 14\right) $, $\left( \sfrac 34,\sfrac 34\right) $ and $\left( \sfrac
14,\sfrac 34\right) $. This tile with reflections, has a reflection line
containing the points $\left( \sfrac 12,0\right) $ and $\left( \sfrac
12,1\right) $ ($p4mm$), or a reflection line containing the points $\left(
0,\sfrac 12\right) $ and $\left( 1,\sfrac 12\right) $ ($p4gm$). 
See Figure \ref{fig:15}.

\begin{figure}[h]
  \centering
  \includegraphics[width=5.5971in]{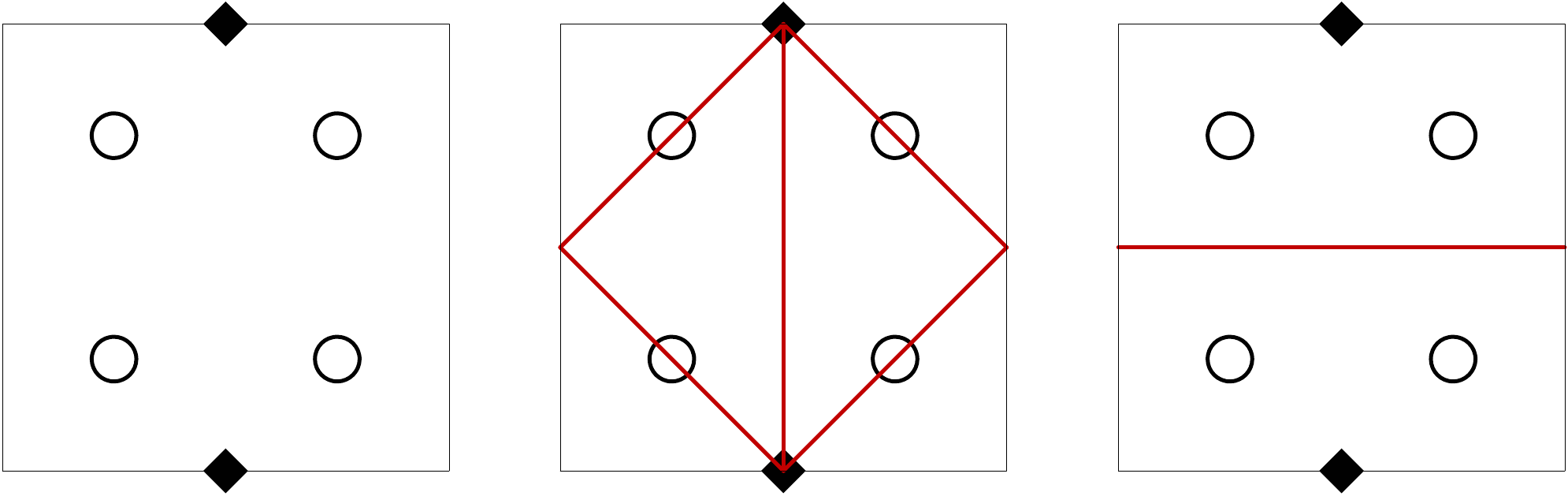}  
  \caption{}
  \label{fig:15}
\end{figure}

\section{Proofs}

\subsection{The general rules}

Consider a tiling of the plane with infinitely many copies of a square tile.
Each tile is located in a region $\left[ p,p+1\right] \times \left[
q,q+1\right] $, with $p,q\in \mathbb{Z}$. This tiling has a non-trivial
rotation center of order $4$ in the point $\left( a,b\right) \in \left[
0,1\right]^2$. Non-trivial means that it is not one of the four vertices of
the tile.

We shall assume w.l.g.\ that $0\leq b\leq a\leq \sfrac 12$, $\,a>0$.

\subsubsection{The action of a rotation on the tiles}

In this section's figures we denote the vertices in the different tiles in the
following way (see Figures \ref{fig:16} and \ref{fig:17}):

\begin{compactenum}[a)]
\item For the tile $\left[ 0,1\right]^2$:\\
$\left( 1,1\right) $ is ``$1$'', $\left( 0,1\right) $ is ``$2$'',
$\left( 0,0\right) $ is ``$3$'' and $\left(1,0\right) $ is ``$4$''.
\item For the tile $\left[ -1,0\right] \times \left[ 0,1\right] $:\\
$\left(0,1\right) $ is ``$a_1$'', $\left( -1,1\right) $ is ``$a_2$'', $\left(
-1,0\right) $ is ``$a_3$'' and $\left( 0,0\right) $ is ``$a_4$''.
\item For the tile $\left[ 0,1\right] \times \left[ -1,0\right]$:\\
$\left(1,0\right) $ is ``$b_1$'', $\left( 0,0\right) $ is ``$b_2$'',
$\left(0,-1\right) $ is ``$b_3$'' and $\left( 1,-1\right) $ is ``$b_4$''.
\item For the tile $\left[ -1,0\right] \times \left[ -1,0\right]$:\\
$\left(0,0\right) $ is ``$c_1$'', $\left( -1,0\right) $ is ``$c_2$'',
$\left(-1,-1\right) $ is ``$c_3$'' and $\left( 0,-1\right) $ is ``$c_4$''.
\item For the tile $\left[ 1,2\right] \times \left[ 0,1\right]$:\\
$\left(2,1\right) $ is ``$d_1$'', $\left( 1,1\right) $ is ``$d_2$'', $\left(
1,0\right) $ is ``$d_3$'' and $\left( 2,0\right) $ is ``$d_4$''.
\item For the tile $\left[ 1,2\right] \times \left[ -1,0\right]$:\\
$\left(2,0\right) $ is ``$e_1$'', $\left( 1,0\right) $ is ``$e_2$'', $\left(
1,-1\right) $ is ``$e_3$'' and $\left( 2,-1\right) $ is ``$e_4$''.
\end{compactenum}

\begin{figure}[h!]
  \centering
  \includegraphics[width=3.8977in]{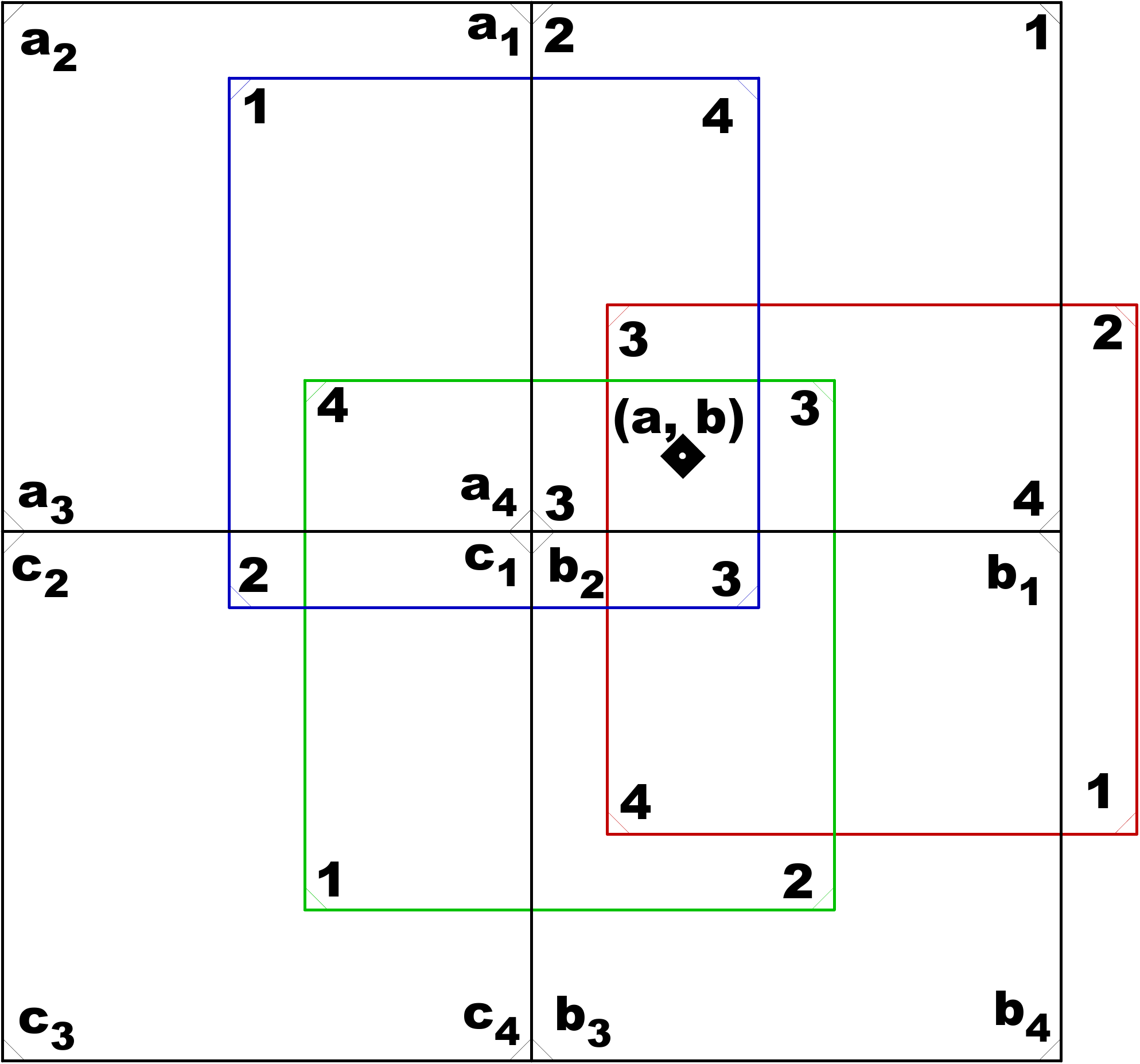}
  \caption{}
  \label{fig:16}
\end{figure}

\begin{figure}[h!]
  \centering
  \includegraphics[width=5.0185in]{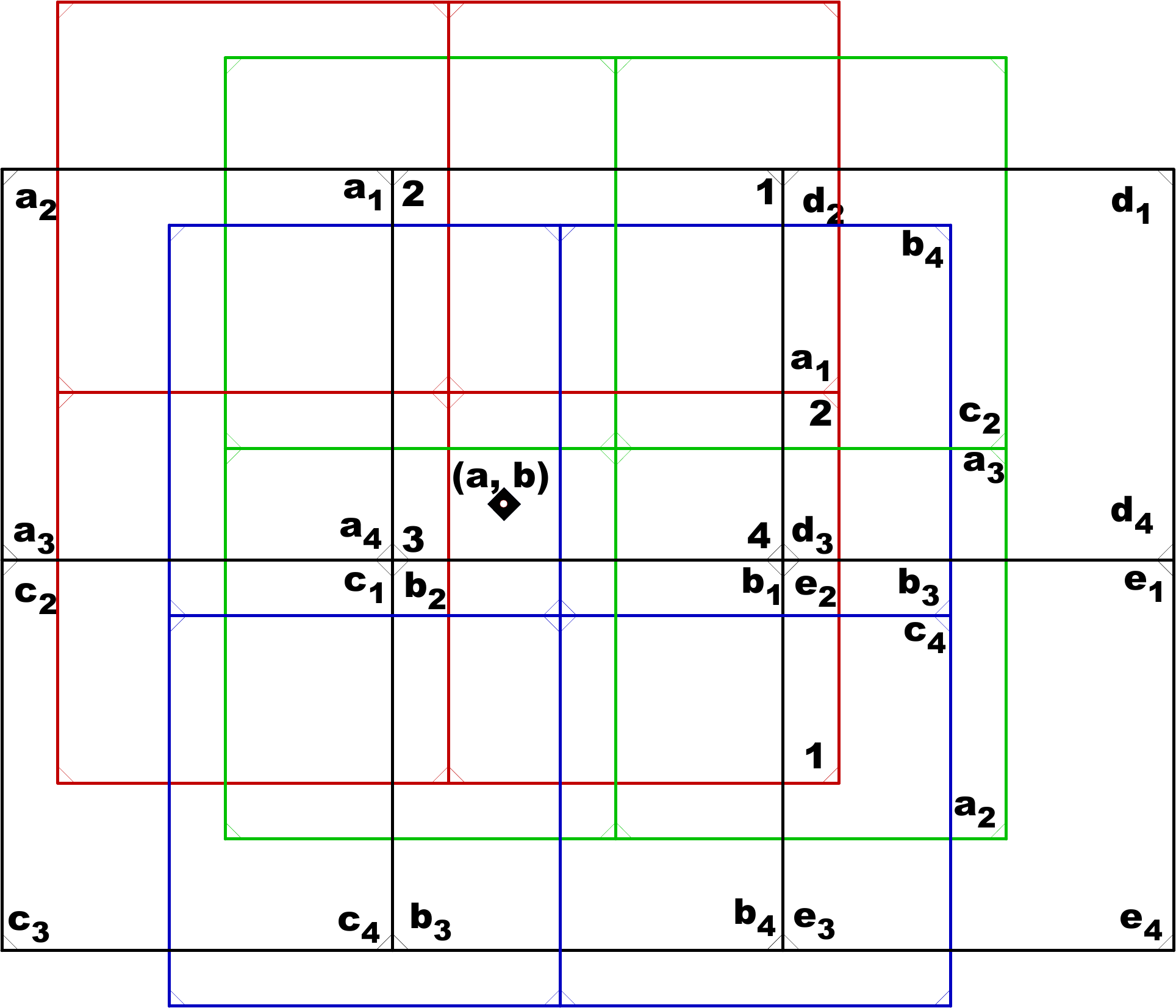}
  \caption{}
  \label{fig:17}
\end{figure}

A single square tile can be located in, at most, four different ways in
every $\left[ p,p+1\right] \times \left[ q,q+1\right] $. We denote A, B, C
and D the different ways of locating the tile in $\left[ -1,0\right] \times
\left[ 0,1\right] $. We denote E, F, G and H the different ways of locating
the tile in $\left[ 0,1\right] \times \left[ -1,0\right]$: \pagebreak[0]
\begin{equation*}
\begin{tabular}{@{\extracolsep{2pt}}@{}ccccc@{}}
\toprule
& A / E & B / F & C / G & D / H \\
\cmidrule(l){2-5}
$a_1$ / $b_1$ & $1$ & $2$ & $3$ & $4$ \\
$a_2$ / $b_2$ & $2$ & $3$ & $4$ & $1$ \\
$a_3$ / $b_3$ & $3$ & $4$ & $1$ & $2$ \\
$a_4$ / $b_4$ & $4$ & $1$ & $2$ & $3$ \\
\bottomrule
\end{tabular}
\end{equation*}

The rotation of order $4$ around $\left( a,b\right) $ generates the
following transformations on the tile (see figures \ref{fig:16}--\ref{fig:24}).
In these tables ``$4$'' means rotation center of order $4$, ``$2$'' means
rotation center of order $2$ and ``tr'' means translation:
\begin{equation*}
\begin{tabular}{@{}ccccc@{}}
\toprule
& A & B & C & D \\
\cmidrule(l){2-5}
$4$ & $\left( a,b\right) $ & $\left( a,b\right) $ & $\left( a,b\right) $ & 
$\left( a,b\right) $ \\
$4$ & $\left( \frac 12+a,\frac 12+b\right) $ & $\left( 1+a-b,a+b\right) $ & 
$\left( \frac 12-b,\frac 12+a\right) $ & $\left( a+b,b-a\right) $ \\
$2$ & $\left( \frac 12+a,b\right) $ & 
$\left( \frac{1+a-b}2,\frac{1+a+b}2\right) $ & - & - \\
tr & - & - & $\left( -2a,1-2b\right) $ & $\left( a-b,a+b\right) $ \\
\bottomrule
\end{tabular}
\end{equation*}
\begin{equation*}
\begin{tabular}{@{}ccccc@{}}
\toprule
& E & F & G & H \\
\cmidrule(l){2-5}
$4$ & $\left( a,b\right) $ & $\left( a,b\right) $ & $\left( a,b\right) $ & 
$\left( a,b\right) $ \\ 
$4$ & $\left( \frac 12+a,\frac 12+b\right) $ & $\left( a-b,a+b\right) $ & 
$\left( \frac 12+b,\frac 12-a\right) $ & $\left( a+b,1-a+b\right) $ \\ 
$4$ & $\left( a-\frac 12,\frac 12+b\right) $ & - & $\left( \frac
12-b,a-\frac 12\right) $ & - \\ 
$2$ & $\left( a,\frac 12+b\right) $ & $\left( \frac{a-b}2,\frac{a+b}2\right) 
$ & - & $\left( \frac{1+a+b}2,\frac{1-a+b}2\right) $ \\ 
tr & - & $\left( a+b,b-a\right) $ & $\left( 2a-1,2b\right) $ & $\left(
1-a+b,1-a-b\right) $ \\
\bottomrule
\end{tabular}
\end{equation*}

Each set of transformations, in these cases (A, B, C, D, E, F, G and H),
generates a $p_4$ pattern in the plane with $pu_i+qv_i$ translation
invariance, where $u_i$ and $v_i$ are
\begin{align*}
&\begin{tabular}{@{}ccccc@{}}
$i\rightarrow $ & A & B & C & D \\
\midrule
$u_i$ & $\left( \frac 12,\frac 12\right) $ & $\left( 1-b,a\right) $ & 
$\left( \frac 12-a-b,\frac 12+a-b\right) $ & $\left( a,b\right) $ \\ 
$v_i$ & $\left( -\frac 12,\frac 12\right) $ & $\left( -a,1-b\right) $ & 
$\left( -\frac 12-a+b,\frac 12-a-b\right) $ & $\left( -b,a\right) $ \\ 
\end{tabular}\\[10pt]
&\begin{tabular}{@{}ccccc@{}}
$i\rightarrow $ & E & F & G & H \\ 
\midrule
$u_i$ & $\left( \frac 12,\frac 12\right) $ & $\left( a,b\right) $ & $\left(
\frac 12-a+b,\frac 12-a-b\right) $ & $\left( b,1-a\right) $ \\ 
$v_i$ & $\left( -\frac 12,\frac 12\right) $ & $\left( -b,a\right) $ & 
$\left( a+b-\frac 12,\frac 12-a+b\right) $ & $\left( a-1,b\right) $ \\ 
\end{tabular}
\end{align*}

For the tile located at $\left[ -1,0\right] \times \left[ -1,0\right] $, the
results are 
\begin{equation*}
\begin{tabular}{@{}ccccc@{}}
\toprule
& $c_1=1$ & $c_1=2$ & $c_1=3$ & $c_1=4$ \\ 
\cmidrule(l){2-5}
$4$ & $\left( a,b\right) $ & $\left( a,b\right) $ & $\left( a,b\right) $ & 
$\left( a,b\right) $ \\ 
$4$ & $\left( a,1+b\right) $ & $\left( \frac 12+a-b,\frac 12+a+b\right) $ & 
$\left( -b,a\right) $ & $\left( \frac 12+a+b,\frac 12-a+b\right) $ \\ 
$2$ & $\left( \frac 12+a,\frac 12+b\right) $ & 
$\left( \frac{a-b}2,\frac{1+a+b}2\right) $ & - & - \\ 
tr & - & - & $\left( 2a,2b\right) $ & $\left( a-b-1,a+b\right) $ \\ 
\bottomrule
\end{tabular}
\end{equation*}
\begin{equation*}
\begin{tabular}{@{}ccccc@{}}
$i\rightarrow $ & $\left( c_1=\right) 1$ & $\left( c_1=\right) 2$ & $\left(
c_1=\right) 3$ & $\left( c_1=\right) 4$ \\
\midrule
$u_i$ & $\left( 1,0\right) $ & $\left( \frac 12-b,\frac 12+a\right) $ & 
$\left( a-b,a+b\right) $ & $\left( \frac 12+b,\frac 12-a\right) $ \\ 
$v_i$ & $\left( 0,1\right) $ & $\left( -\frac 12-a,\frac 12-b\right) $ & 
$\left( -a-b,a-b\right) $ & $\left( a-\frac 12,\frac 12+b\right) $ \\ 
\end{tabular}
\end{equation*}


\begin{figure}[htb!]
  \centering
  \includegraphics[width=4.8516in]{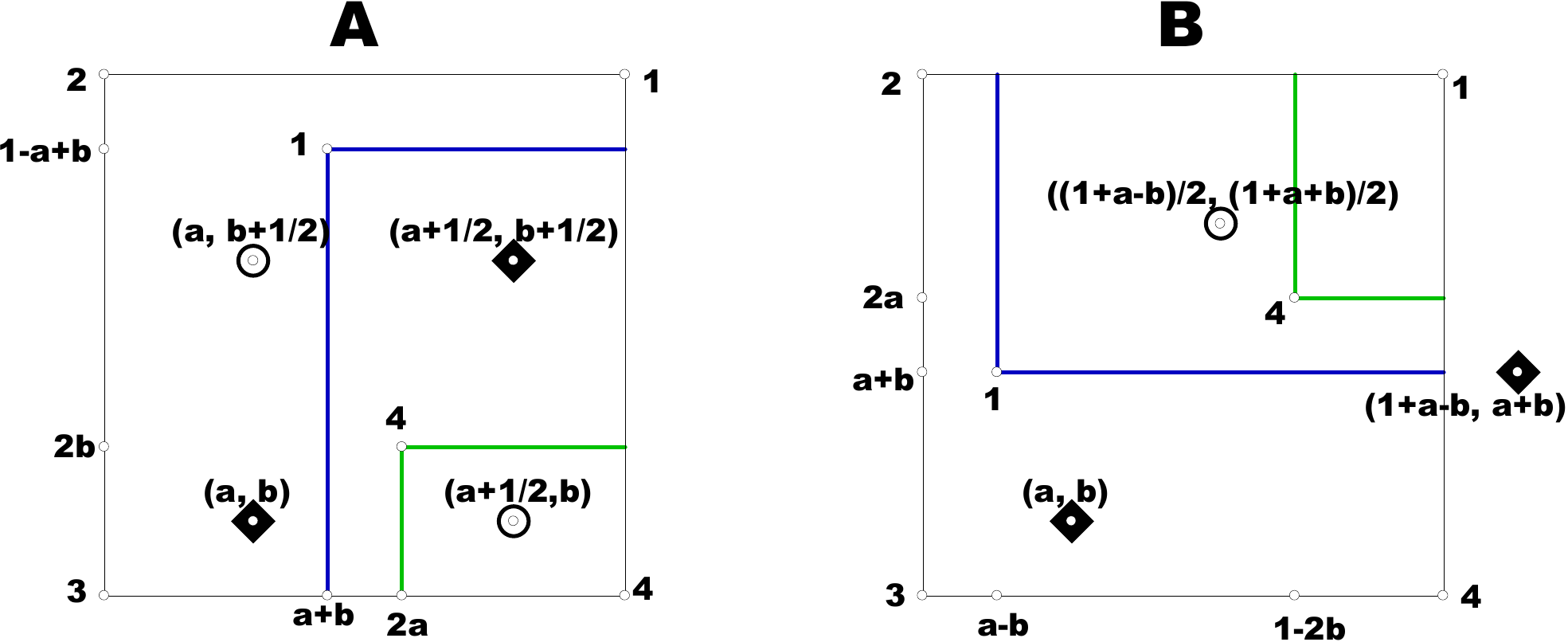}
  \vspace*{-8pt}
  \caption{}
  \label{fig:18}
\end{figure}


\begin{figure}[htb!]
  \centering
  \includegraphics[width=4.5619in]{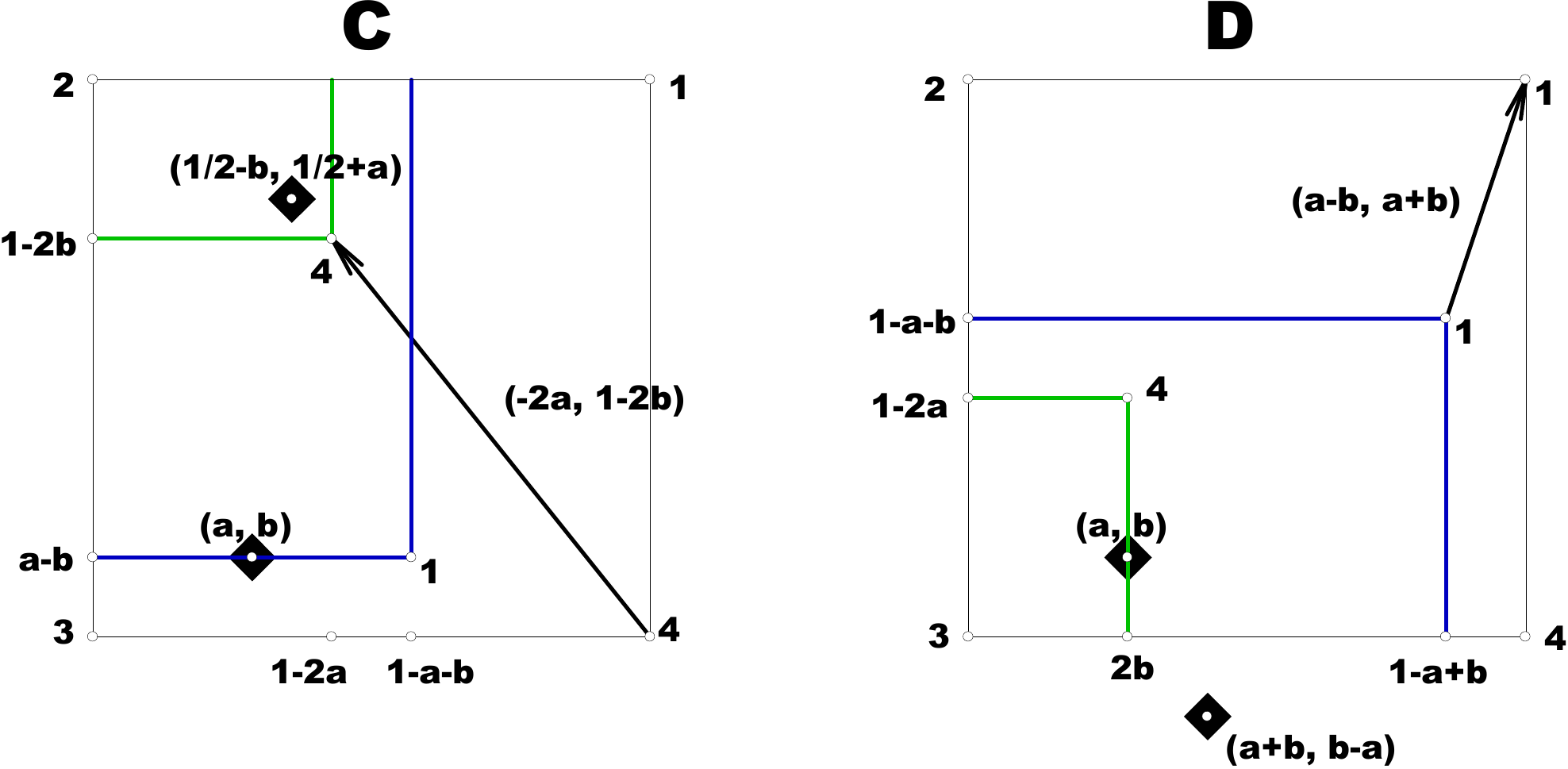}
  \vspace*{-16pt}
  \caption{}
  \label{fig:19}
\end{figure}

Notice that there are more transformations involving, for example, the
interaction of the tiles $\left[ -1,0\right] \times \left[ 0,1\right] $ and 
$\left[ 0,1\right] \times \left[ -1,0\right] $:

\begin{equation*}
\begin{tabular}{@{}ccccc@{}}
\toprule
& AE & AF & AG & AH \\ 
\cmidrule(l){2-5}
$4$ & $\left( a,b\right) $ & $\left( \frac 12+a-b,\frac 12+a+b\right) $ & 
$\left( 1-b,a\right) $ & $\left( \frac 12+a+b,\frac 12-a+b\right) $ \\ 
$4$ & $\left( a,1+b\right) $ & $\left( -b,a\right) $ & $\left( 1-a,-b\right) 
$ & $\left( 1+b,1-a\right) $ \\ 
$2$ & $\left( \frac 12+a,\frac 12+b\right) $ & 
$\left( \frac{1+a-b}2,\frac{a+b}2\right) $ & - & - \\ 
tr & - & - & $\left( 2a,2b\right) $ & $\left( -a+b,1-a-b\right) $ \\ 
\bottomrule
\end{tabular}
\end{equation*}

\begin{equation*}
\begin{tabular}{@{\extracolsep{-1.2pt}}@{}ccccc@{}}
\toprule
& BE & BF & BG & BH \\ 
\cmidrule(l){2-5}
$4$ & $\left( \frac 12+a-b,\frac 12+a+b\right) $ & $\left( 1-b,a\right) $ & 
$\left( \frac 12-a-b,\frac 12+a-b\right) $ & $\left( 1-a,1-b\right) $ \\ 
$4$ & $\left( a,1+b\right) $ & $\left( -b,a\right) $ & $\left( 1-a,-b\right) 
$ & $\left( 1+b,1-a\right) $ \\ 
$2$ & - & $\left( \frac 12-b,\frac 12+a\right) $ & 
$\left( 1-\frac{a+b}2,\frac{1+a-b}2\right) $ & - \\ 
tr & $\left( 1-a-b,a-b\right) $ & - & - & $\left( -2b,2a\right) $ \\ 
\bottomrule
\end{tabular}
\end{equation*}

\begin{equation*}
\begin{tabular}{@{}ccccc@{}}
\toprule
& CE & CF & CG & CH \\ 
\cmidrule(l){2-5}
$4$ & $\left( b,1-a\right) $ & $\left( \frac 12-a-b,\frac 12+a-b\right) $ & 
$\left( 1-a,1-b\right) $ & $\left( \frac 12-a+b,\frac 12-a-b\right) $ \\ 

$4$ & $\left( a,1+b\right) $ & $\left( -b,a\right) $ & $\left( 1-a,-b\right) 
$ & $\left( 1+b,1-a\right) $ \\ 
$2$ & - & - & $\left( \frac 12-a,\frac 12-b\right) $ & 
$\left( \frac{1-a+b}2,1-\frac{a+b}2\right) $ \\ 
tr & $\left( 2a,2b\right) $ & $\left( -a+b,1-a-b\right) $ & - & - \\ 
\bottomrule
\end{tabular}
\end{equation*}

\begin{equation*}
\begin{tabular}{@{}ccccc@{}}
\toprule
& DE & DF & DG & DH \\
\cmidrule(l){2-5}
$4$ & $\left( \frac 12+a+b,\frac 12-a+b\right) $ & $\left( a,b\right) $ & 
$\left( \frac 12-a+b,\frac 12-a-b\right) $ & $\left( b,1-a\right) $ \\ 
$4$ & $\left( a,1+b\right) $ & $\left( -b,a\right) $ & $\left( 1-a,-b\right) 
$ & $\left( 1+b,1-a\right) $ \\ 
$2$ & $\left( \frac{a+b}2,\frac{1-a+b}2\right) $ & - & - & $\left( \frac
12+b,\frac 12-a\right) $ \\ 
tr & - & $\left( -2b,2a\right) $ & $\left( 1-a-b,a-b\right) $ & - \\ 
\bottomrule
\end{tabular}
\end{equation*}

\begin{equation*}
\begin{tabular}{@{}ccccc@{}}
$i\rightarrow $ & AE/BF/CG/DH & AF/BG/CH/DE & AG/BH/CE/DF & AH/BE/CF/DG \\ 
\midrule
$u_i$ & $\left( 1,0\right) $ & $\left( \frac 12+a,\frac 12+b\right) $ & 
$\left( a-b,a+b\right) $ & $\left( \frac 12-a,\frac 12-b\right) $ \\ 
$v_i$ & $\left( 0,1\right) $ & $\left( -\frac 12-b,\frac 12+a\right) $ & 
$\left( -a-b,a-b\right) $ & $\left( b-\frac 12,\frac 12-a\right) $ \\ 
\end{tabular}
\end{equation*}

\subsubsection{Case E}

In this case the transformations are the three rotations of order $4$ 
($\sigma_0$, $\sigma_1$ and $\sigma_2$) with centers located at 
$\left(a,b\right) $, $\left( \sfrac 12+a,\sfrac 12+b\right) $ and
$\left( a-\sfrac 12,\sfrac 12+b\right) $, and the rotation of order $2$ ($\sigma_3$) with
centers located at $\left( a,\sfrac 12+b\right) $. These transformations are
compatible in the sense that they generate a pattern $p4$ in the plane.

\begin{figure}[htb!]
  \centering
  \includegraphics[width=4.907in]{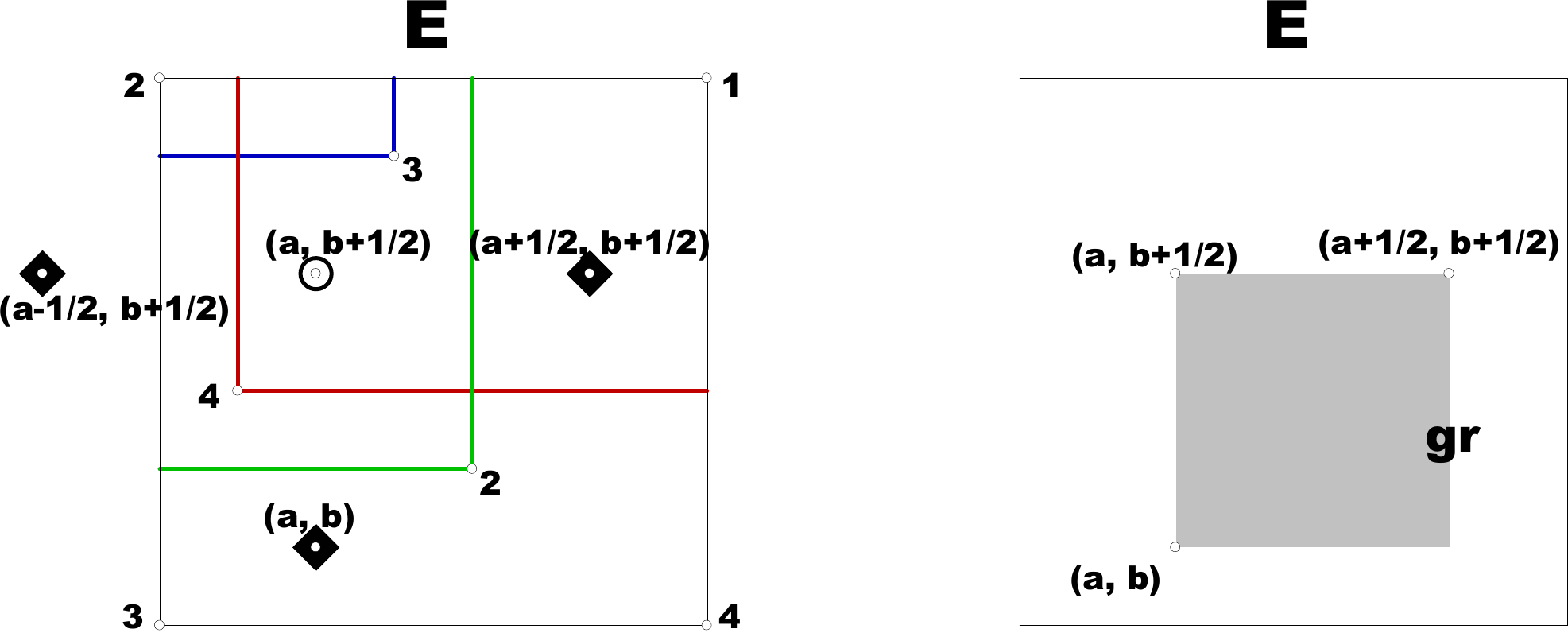}  
  \caption{}
  \label{fig:20}
\end{figure}

\begin{proposition}
The tile is such that it has the pattern $p4$ generated by the four
transformations in the plane ($\mathbb{R}^2$).
\end{proposition}

\begin{proof}
Consider the dashed square \textbf{gr} contained in the tile 
(Figure \ref{fig:20}). 
\textbf{gr} is a generating region of the pattern $p4$ in the plane. Its
rotation by $\sigma_3$ covers $\left[ 0,a\right] \times \left[ b+\sfrac
12,1\right] $. Its rotation by $\sigma_0$, $\sigma_0^2$ and $\sigma_0^3$
covers $\left[ 0,a\right] \times \left[ 0,b+\sfrac 12\right] $ and $\left[
a,a+\sfrac 12\right] \times \left[ 0,b\right] $. Its rotation by $\sigma_1$, 
$\sigma_1^2$ and $\sigma_1^3$ covers $\left[ a+\sfrac 12,1\right] \times
\left[ b,1\right] $ and $\left[ a,a+\sfrac 12\right] \times \left[ b+\sfrac
12,1\right] $. The rotation of $\left[ a,a+b\right] \times \left[ b+\sfrac
12,1-a+b\right] $ by $\sigma_1$ covers $\left[ a+\sfrac 12,1\right] \times
\left[ 0,b\right] $.
\end{proof}

\subsubsection{Case F}

In this case the transformations are a translation by 
$u=(a+b,\linebreak[0]
b-a)$, 
two rotations of order $4$ ($\sigma_0$ and $\sigma_1$)
with centers located at $\left( a,b\right) $ and $\left( a-b,a+b\right) $,
and a rotation of order $2$ ($\sigma_2$) with center located at $\bigl( 
\frac{a-b}2,\frac{a+b}2\bigr) $. These transformations are compatible in
the sense that they generate a pattern $p4$ in the plane.

\begin{figure}[htb!]
  \centering
  \includegraphics[width=4.6561in]{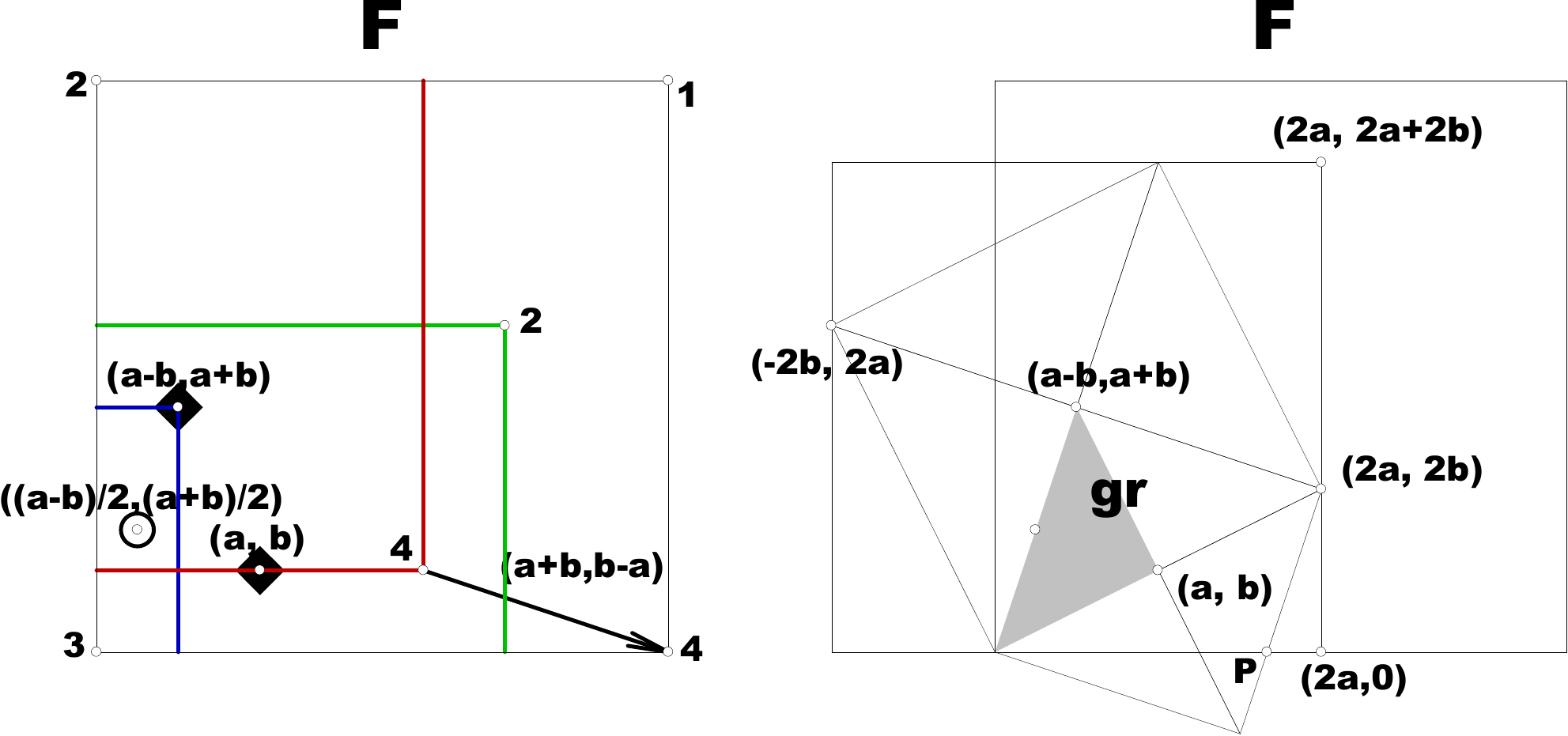}
  \vspace*{-8pt}
  \caption{}
  \label{fig:21}
\end{figure}

\begin{proposition}
The covered region of the tile, by the pattern $p4$ in the plane, is the
union of $\left[ 2a,1\right] \times \left[ 0,a+b\right] $ and the
translations by $nu$ of $\left[ 0,2a\right] \times \left[ 0,1\right] $ and
(whenever $a+b<\frac 12$ and $b<a$) by $-nu$ of $\left[ 2a,1-2b\right]
\times \left[ a+b,2a\right] $, $n=0,1,2,\ldots $
\end{proposition}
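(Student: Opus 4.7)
The plan is to follow the strategy of the Case E proposition: choose a generating region $gr$ for the $p4$ group $G=\langle \sigma_0, \sigma_1, \sigma_2, u\rangle$ of Case F, and then determine which images $g(gr)$, for $g\in G$, lie inside the tile $[0,1]^2$. The union of these images is, by definition, the covered region. Because $G$ does not contain the unit translations along the coordinate axes, this covered region will in general be a strict subset of the tile, and the proposition describes it explicitly.

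First I would fix $gr$ as a fundamental domain whose closure has as vertices the two order-four centers $(a,b)$ and $(a-b, a+b)$ and the order-two center $\bigl(\frac{a-b}{2}, \frac{a+b}{2}\bigr)$, as suggested by Figure~\ref{fig:21}. Then I would apply the four rotations $\sigma_0^k$ ($k=0,1,2,3$) about $(a,b)$, the four rotations $\sigma_1^k$ about $(a-b,a+b)$, and the half-turn $\sigma_2$. A direct coordinate computation (using the explicit centers) shows that the union of all these images, restricted to the tile, is $[0, 2a]\times [0,1]$ together with the small rectangle $[2a,1]\times [0, a+b]$ to its right.

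Next I would add the translates by $nu$ for $n\in\mathbb{Z}$. Since $u=(a+b, b-a)$ points down and to the right in the regime $0\le b\le a\le \sfrac{1}{2}$, positive $n$ push the already-covered region to the lower right, producing the successive translates $nu+[0,2a]\times [0,1]$, each of which intersects the tile for as long as $n(a+b)<1$. Negative $n$ push the region to the upper left; the only part of the $n=0$ coverage whose $-u$-translate still meets the tile is the rectangle $[2a, 1-2b]\times [a+b, 2a]$, which is non-empty precisely when $a+b<\sfrac{1}{2}$ and $b<a$. This yields the final family of translates in the statement.

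The main obstacle is the bookkeeping: one must verify both that every listed piece is genuinely the image of $gr$ under some element of $G$ intersected with the tile, and conversely that no element of $G$ sends $gr$ to a region of the tile outside the listed pieces. I would carry this out by a systematic case analysis on the element of $G$, using the explicit coordinates of the rotation centers and of the translation vector $u$ to compute each image, checking its intersection with $[0,1]^2$ and matching boundaries between consecutive translates piece by piece.
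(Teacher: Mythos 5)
Your overall plan (track images of a generating region \textbf{gr} under the group of Case F) is the right starting point, but the central step as you state it fails, and what is missing is precisely the technical heart of the paper's proof. First, a framing issue: the covered region is not ``the union of those $g(\mathbf{gr})$, $g\in G$, that lie inside the tile'' --- since $\mathbf{gr}$ is a fundamental domain of the $p4$ pattern, $\bigcup_{g\in G} g(\mathbf{gr})$ is the whole plane and its intersection with $[0,1]^2$ is the whole tile. What matters is which points can be reached from $\mathbf{gr}$ by a \emph{chain} of generator applications all of whose intermediate images remain inside $[0,1]^2$; that is why the covered region is a proper subset of the tile. Second, and fatally, your claim that one round of rotations ($\sigma_0^k$, $\sigma_1^k$, $\sigma_2$) applied to $\mathbf{gr}$ already covers $[0,2a]\times[0,1]\cup[2a,1]\times[0,a+b]$ cannot be correct: the translation lattice of this $p4$ pattern is generated by $u_F=(a,b)$ and $v_F=(-b,a)$, so $\mathbf{gr}$ has area $\tfrac14(a^2+b^2)$, and at most nine images of it have total area at most $\tfrac94(a^2+b^2)$, which for small $a,b$ is far smaller than the area $2a$ of $[0,2a]\times[0,1]$. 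Even appending all the $nu$-translates afterwards does not repair this: for $a=b$ small the proposition's region is the entire tile (area $1$), while your construction covers area of order $a$.

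What the paper actually does, and what your proposal needs, is an iterative bootstrap. One round of rotations (plus one translation by $u$ to fill a corner) covers only the small rectangle $[0,2a]\times[0,2a+2b]$ together with $[2a,3a+b]\times[0,a+b]$. The proof then defines increasing regions $\mathcal{R}_n$ via the sequences $\alpha_n,\gamma_n$ and shows inductively that if $\mathcal{R}_n$ is covered, then rotating it by $\sigma_1$, translating a suitable strip by $-u$, and rotating by $\sigma_0^{-1}$ covers $\mathcal{R}_{n+1}$; after roughly $1/(a+b)$ rounds this saturates at $[0,2a]\times[0,1]\cup[2a,1]\times[0,a+b]$, and only then do the final $\sigma_1^{-1}$ rotation and the $\pm nu$ translates produce the region in the statement. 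Without an argument of this alternating, self-reinforcing kind --- in which each newly covered piece is fed back into the generators --- the ``direct coordinate computation'' you invoke cannot reach the claimed region, so the proposal as written has a genuine gap.
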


\begin{proof}
Consider the dashed triangle \textbf{gr} contained in the tile 
(Figure \ref{fig:21}); 
\textbf{gr} is a generating region of the pattern $p4$ in the plane. Its
rotations by $\sigma_0$ covers the quadrilateral with vertices at 
$\left(0,0\right) $, 
$P$, 
$\left( 2a,2b\right) $ and 
$\left( a-b,a+b\right) $; 
$P=\left(\sfrac{2(a^2+b^2)}{(a+b)},0\right) $. 
The triangle with vertices at $P$, 
$\left( 2a,0\right) $ and 
$\left( 2a,2b\right) $ is the
translation by $u$ of the triangle with vertices 
$\left( \sfrac{(a-b)^2}{(a+b)},a-b\right) $, 
$\left( a-b,a-b\right) $ and 
$\left(a-b,a+b\right) $. 
Hence, we have covered a quadrilateral with vertices at 
$\left( 0,0\right) $, 
$\left( 2a,0\right) $, 
$\left( 2a,2b\right) $ and 
$\left( a-b,a+b\right) $. Its rotation by $\sigma_1$, 
$\sigma_1^2$ and 
$\sigma_1^3$ covers the rectangle 
$\left[ 0,2a\right] \times \left[0,2a+2b\right] $.

If one rotates this last rectangle by $\sigma_0^{-1}$, one covers $\left[
2a,3a+b\right] \times \left[ 0,a+b\right] $. Let us use the notations 
\begin{equation*}
\begin{split}
\alpha_n &=\left( n+3\right) a+\left( n+1\right) b,\quad\text{for }0\leq n\leq 
\frac{1-3a-b}{a+b}\text{,} \\
&=1,\quad\text{for }n\geq \frac{1-3a-b}{a+b}\text{;}
\end{split}
\end{equation*}
\begin{equation*}
\begin{split}
\gamma_n &=\left( n+2\right) a+\left( n+2\right) b,\quad\text{for }0\leq n\leq 
\frac{1-2a-2b}{a+b}\text{,} \\
&=1,\quad\text{for }n\geq \frac{1-2a-2b}{a+b}\text{,}
\end{split}
\end{equation*}
and call $\mathcal{R}_n$ the region union of $\left[ 0,2a\right] \times
\left[ 0,\gamma_n\right] $ and $\left[ 2a,\alpha_n\right] \times \left[
0,a+b\right] $. This region of the tile has the pattern $p4$ of the plane
for $n=0$.

Assume that, for some $n\geq 0$, the region $\mathcal{R}_n$ is covered.

Rotating $\mathcal{R}_n$ by $\sigma_1$, one covers also the rectangle 
$[ a-b,2a] \times [ \gamma_n,\gamma_{n+1}] $. The
rectangle $[ 0,a-b] \times [ \gamma_n,\gamma_{n+1}] $
is the translation by $( -u) $ of 
$[a+b,2a] \times [\gamma_n-a+b,\gamma_{n+1}-a+b]$. 
Hence, the region $[0,2a] \times [ \gamma_n,\gamma_{n+1}] $ is covered,
and so is the
region $[ 0,2a] \times [ 0,\gamma_{n+1}] $.

If one rotates this last rectangle by $\sigma_0^{-1}$, one covers $[
2a,\alpha_{n+1}] \times [ 0,a+b] $. Hence, 
$\mathcal{R}_{n+1}$ is covered.

Notice that if $\alpha_n=1$, then $\gamma_{n+1}=1$, and if $\gamma
_{n+1}=1 $, then $\alpha_n=1$. Therefore, for some 
$n\geq 0$, $\mathcal{R}_n $ is the union of 
$[ 0,2a] \times [ 0,1] $ and 
$[ 2a,1] \times [ 0,a+b] $.

If $a+b<\sfrac 12$ and $b<a$, then rotating $[ 0,2a] \times [
0,1] $ by $\sigma_1^{-1}$, one covers $[ 2a,1-2b] \times
[ a+b,2a] $.

Finally, the covered region of the tile, by the pattern $p4$ in the plane,
is the union of $[ 2a,1] \times [ 0,a+b] $ and the
translations by $nu$ of $[ 0,2a] \times [ 0,1] $ and
(whenever $a+b<\sfrac 12$ and $b<a$) by $-nu$ of $[ 2a,1-2b]
\times [ a+b,2a] $, $n=0,1,2,\ldots $
\end{proof}

Notice that the covered region contains 
\begin{equation*}
\left\{ \left( x,y\right) \in \left[ 0,1\right]^2:\,y\leq 1+\left( a+b\right)
^{-1}\left( b-a\right) \left( x-a+b\right) \right\} \text{.}
\end{equation*}

\subsubsection{Case G}

Assume that $\left( a,b\right) \neq \left( \sfrac 12,0\right) $. In this case
the transformations are a translation by $u=\left( 1-2a,-2b\right) $, and
three rotations of order $4$ ($\sigma_0$, $\sigma_1$ and $\sigma_2$) with
centers located at $\left( a,b\right) $, $\left( \sfrac 12+b,\sfrac
12-a\right) $ and $\left( \sfrac 12-b,a-\sfrac 12\right) $ 
(see Figure \ref{fig:22}).
These transformations are compatible in the sense that they generate a
pattern $p4$ in the plane.

\begin{figure}[htb!]
  \centering
  \includegraphics[width=4.7262in]{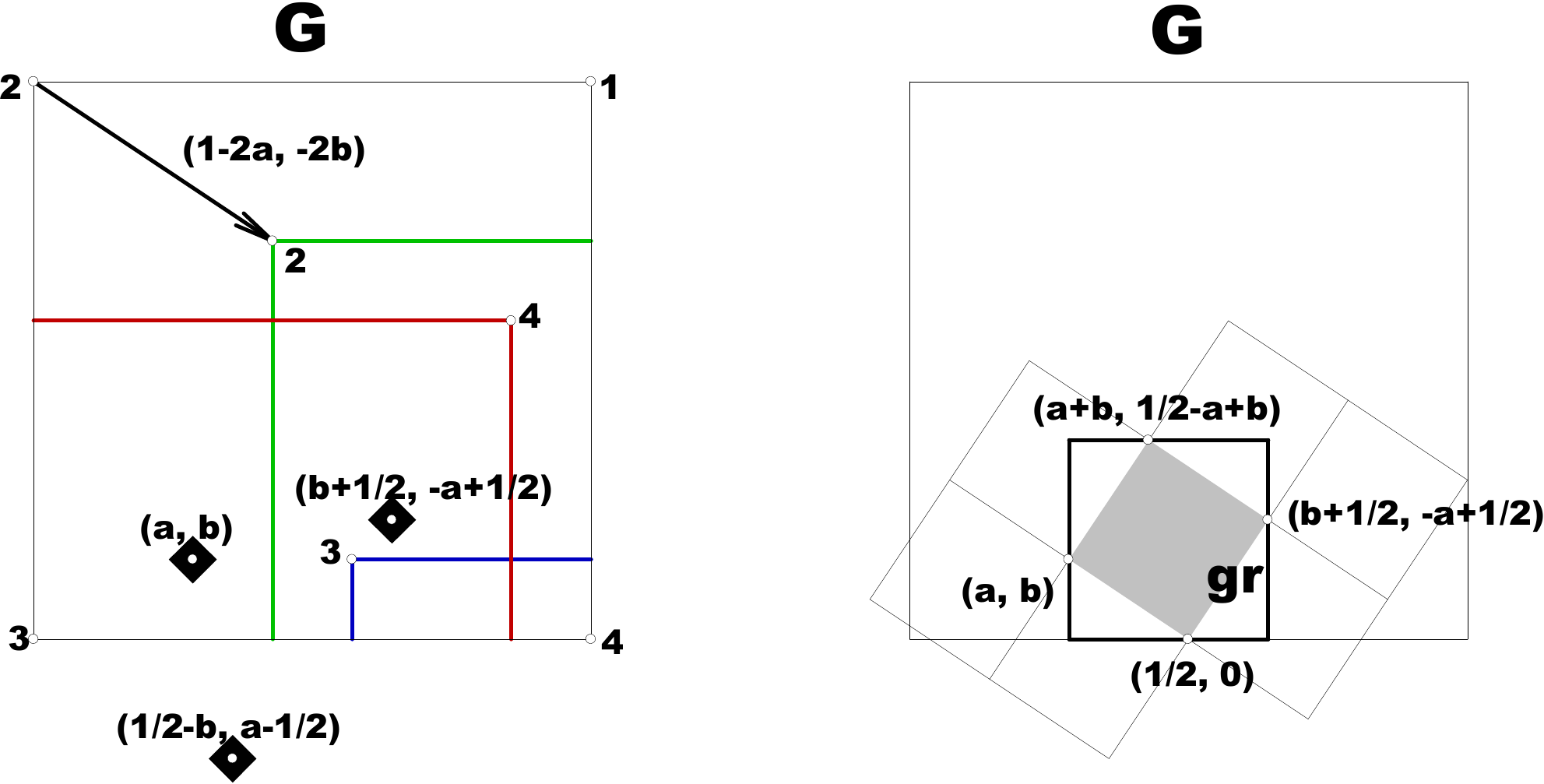}
  \vspace*{-8pt}
  \caption{}
  \label{fig:22}
\end{figure}

Notice that if $\left( a,b\right) =\left( \sfrac 12,0\right) $, the
transformations collapse: $u=0$ and all the centers are located at $\left(
\sfrac 12,0\right) $.

\begin{figure}[htb!]
  \centering
  \vspace{8pt}
  \includegraphics[width=5.5556in]{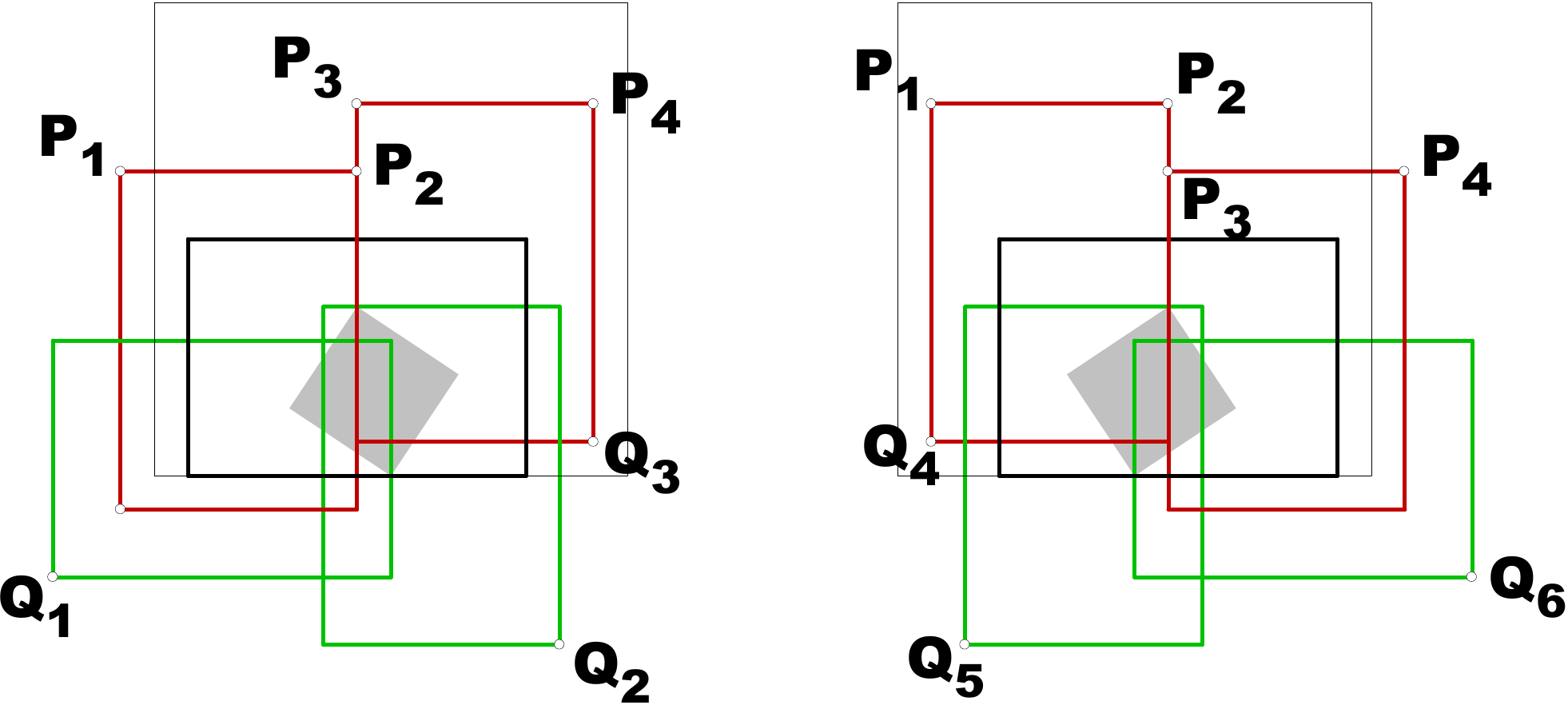}
  \vspace*{-4pt}
  \caption{}
  \label{fig:23}
\end{figure}

In Figure \ref{fig:23} 
the black rectangle, $\mathcal{R}_n$, has vertices at $\left(
\alpha_n,0\right) $, $\left( \beta_n,0\right) $, $\left( \alpha_n,\gamma
_n\right) $ and $\left( \beta_n,\gamma_n\right) $, with 
\begin{align*}
0\leq \alpha_n\leq a<\frac 12+b\leq \beta_n\leq 1\text{,}&\\[2pt]
\text{and \;}0<\frac 12-a+b\leq \gamma_n\leq 1\text{.}&
\end{align*}

The red and the green rectangles are rotations of the black one by $\sigma_0
$, $\sigma_0^2$, $\sigma_1^{-1}$ and $\sigma_1$, in the left hand side
(of Figure \ref{fig:23}), 
and by $\sigma_0$, $\sigma_0^{-1}$, $\sigma_1^{-1}$ and 
$\sigma_1^2$, in the right hand side (of Figure \ref{fig:23}). 
The points $P_1,\dots,P_4$, $Q_1,\dots,Q_6$ have coordinates: 
\begin{align*}
P_1&=\left( a+b-\gamma_n,-a+b+\beta_n\right) \\
P_2&=\left( a+b,-a+b+\beta_n\right) \\
P_3&=\left( a+b,1-a+b-\alpha_n\right) \\
P_4&=\left( a+b+\gamma_n,1-a+b-\alpha_n\right) \\
Q_1&=\left( 2a-\beta_n,2b-\gamma_n\right) \\
Q_2&=\left( 1-a+b,-a-b+\alpha_n\right) \\
Q_3&=\left( a+b+\gamma_n,1-a+b-\beta_n\right) \\
Q_4&=\left( a+b-\gamma_n,-a+b+\alpha_n\right) \\
Q_5&=\left( a-b,a+b-\beta_n\right) \\
Q_6&=\left( 1+2b-\alpha_n,1-2a-\gamma_n\right) \text{.} \\
\end{align*}

The coordinates $\alpha_n$, $\beta_n$ and $\gamma_n$ are defined as
follows: 
\begin{align*}
\alpha_0&=a,\quad\beta_0=\frac 12+b,\quad\gamma_0=\frac 12-a+b\text{;}\\[3pt]
\alpha_{n+1} &=a+b-\gamma_n,\quad\text{if }a+b-\gamma_n\geq 0\text{,} \\
&=0,\quad\text{if }a+b-\gamma_n\leq 0\text{;}\\[3pt]
\beta_{n+1} &=a+b+\gamma_n,\quad\text{if }a+b+\gamma_n\leq 1\text{,} \\
&=1,\quad\text{if }a+b+\gamma_n\geq 1\text{;}\\[3pt]
\gamma_{n+1} &=-a+b+\beta_n,\quad\text{if }a+b\leq \frac 12\text{,} \\
&=1-a+b-\alpha_n,\quad\text{if }a+b\geq \frac 12\text{.}\\
\end{align*}

Notice that $\gamma_n\leq 1-a+b$, for every $n=0,1,2,\ldots $

\begin{proposition}
The tile is such that the region 
\begin{equation*}
\left[ 0,2a\right] \times \left[ 0,1-a+3b\right] \cup \left[ 2a,1\right]
\times \left[ 0,1-a+b\right] 
\end{equation*}
has the pattern $p4$ generated by the four transformations in the plane 
($\mathbb{R}^2$).
\end{proposition}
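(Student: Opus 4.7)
The plan is to mimic the inductive strategy used in Case F, now applied to the three sequences $\alpha_n$, $\beta_n$, $\gamma_n$ that define the rectangles $\mathcal{R}_n$. First I would verify that the dashed region \textbf{gr} in Figure \ref{fig:23} is a fundamental region for the $p4$ pattern generated by $\sigma_0$, $\sigma_1$, $\sigma_2$ and the translation $u=(1-2a,-2b)$; the compatibility of the four transformations is already asserted in the preamble to the proposition, so this reduces to an area check against the parallelogram spanned by $u$ and $\sigma_0 u$. I would then check the base case, namely that $\mathcal{R}_0=[a,\tfrac12+b]\times[0,\tfrac12-a+b]$ is covered by the images of \textbf{gr} under $\sigma_0^{k}$, $\sigma_1^{k}$, $\sigma_2^{k}$ for $k=0,1,2,3$, using the fact that the three rotation centers all lie along the lower portion of the tile.

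The heart of the argument is the inductive step. Assuming $\mathcal{R}_n=[\alpha_n,\beta_n]\times[0,\gamma_n]$ is covered, I would apply $\sigma_0$, $\sigma_0^{2}$ and $\sigma_1^{\pm1}$ (together with $\sigma_1^{2}$ or $\sigma_0^{-1}$, as dictated by the panel of Figure \ref{fig:23}) to $\mathcal{R}_n$; these yield the red and green rectangles whose corners are precisely the listed points $P_1,\dots,P_4,Q_1,\dots,Q_6$. Any residual horizontal gap within the band $[\alpha_{n+1},\beta_{n+1}]\times[\gamma_n,\gamma_{n+1}]$ is then closed by a single $\pm u$-translate of one of these rotated copies. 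Here the case split between $a+b\le\tfrac12$ and $a+b\ge\tfrac12$ is essential, because the recursion for $\gamma_{n+1}$ changes: in the first subcase the bridging rectangle sits to the right of $\mathcal{R}_n$, in the second it sits to the left, matching the two branches in the definitions of $\alpha_{n+1}$, $\beta_{n+1}$, $\gamma_{n+1}$.

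Once the inductive step is established, the bound $\gamma_n\le 1-a+b$ (noted just before the proposition) combined with the strict monotonicity of $\alpha_n$ (down) and $\beta_n$ (up) gives, after finitely many steps, $\alpha_n=0$, $\beta_n=1$, $\gamma_n=1-a+b$; hence the entire horizontal strip $[0,1]\times[0,1-a+b]$ is covered. Because $-u=(2a-1,2b)$ lies in the $p4$ group, translating this strip by $-u$ and intersecting with $[0,1]^{2}$ produces the extra rectangle $[0,2a]\times[2b,1-a+3b]$ (truncated at $y=1$ if $a<3b$). Taking the union with the previously covered strip yields the claimed region $[0,2a]\times[0,1-a+3b]\cup[2a,1]\times[0,1-a+b]$.

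The main obstacle will be the bookkeeping in the inductive step: between the two subcases $a+b\lessgtr\tfrac12$ and the two boundary regimes in which $\alpha_n$ has already saturated at $0$ or $\beta_n$ at $1$, there are effectively four configurations, and in each one has to verify that the rotated copies of $\mathcal{R}_n$ together with a single translate fill the new horizontal band without protruding outside the tile. Tracking the explicit coordinates of the points $P_i$ and $Q_j$ is the real work; the remainder of the argument is a routine combination of $p4$-invariance and planar book-keeping.
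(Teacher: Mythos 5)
Your plan reproduces the paper's strategy for the generic case: the same rectangles $\mathcal{R}_n$, the same induction on $n$, the same split $a+b\lessgtr\frac12$, and the same final assembly of the strip $[0,1]\times[0,1-a+b]$ with its $(-u)$-translate to obtain $[0,2a]\times[0,1-a+3b]$. But there is a genuine gap: the degenerate cases $b=0$ and $a=\frac12$ cannot be absorbed into this induction, because the recursion stalls there and your appeal to ``strict monotonicity of $\alpha_n$ (down) and $\beta_n$ (up)'' fails. Take $b=0$: then $\beta_{n+1}=a+\gamma_n$ and $\gamma_{n+1}=-a+\beta_n$, so from $\beta_0=\frac12$, $\gamma_0=\frac12-a$ one gets $\beta_n=\frac12$ and $\gamma_n=\frac12-a$ for every $n$; the rectangles never grow beyond $[\,\cdot\,,\frac12]\times[0,\frac12-a]$ and the strip $[0,1]\times[0,1-a]$ is never reached. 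The same stalling occurs for $a=\frac12$ (there $\alpha_n\equiv\frac12$ and $\gamma_n\equiv b$). The paper devotes two separate direct arguments to exactly these cases (its parts 3 and 4), rotating the generating region about a single center and then translating by $pu$, $p\in\mathbb{Z}$; some such substitute is indispensable, and without it the proposition is unproved on the boundary of the parameter range.

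A second, smaller inaccuracy: in the regime $a+b\geq\frac12$ the residual gap in the new band is not closed by a $\pm u$-translate. The paper closes it with the order-$2$ rotation $\sigma_2$ about the third center $\left(\frac12-b,a-\frac12\right)$, and must then verify (its inequalities f) and g) in that subcase) that the preimage rectangle lies inside an already-covered red rectangle with vertex $P_4$. Only in the regime $a+b\leq\frac12$ is the bridge a $u$-translate whose preimage lands in $\mathcal{R}_n$. This is fixable, but it is a different transformation from the one you name, and these containment checks — together with the verification that a gap arises only for $n$ in the explicit windows $\frac{1-2a-2b}{2b}<n<\frac{1-2a}{2b}$, respectively $\frac{2a+2b-1}{1-2a}<n<\frac{2b}{1-2a}$ — are where the actual content of the proof lies rather than routine bookkeeping.
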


\begin{proof}
\textbf{1.} Assume that $a+b\leq \frac 12$ and $b>0$. Then 
\begin{align*}
\alpha_n &=2a-\left( n-1\right) b-\frac 12,\quad\text{for }1\leq n\leq 
   \frac{4a+2b-1}{2b}\text{,} \\
&=0,\quad\text{for }n\geq \frac{4a+2b-1}{2b}\text{;}\displaybreak[0]\\[3pt]
\beta_n &=\left( n+1\right) b+\frac 12,\quad\text{for }n\leq \frac{1-2b}{2b} \\
&=1,\quad\text{for }n\geq \frac{1-2b}{2b} \displaybreak[0]\\[3pt]
\gamma_n &=\left( n+1\right) b-a+\frac 12,\quad\text{for }n\leq \frac 1{2b} \\
&=1-a+b,\quad\text{for }n\geq \frac 1{2b}.
\end{align*}

Notice that\\
a) 
\begin{equation*}
4a+2b-1\leq 1-2b<1\text{;}
\end{equation*}
b) for every $n\geq 0$, 
\begin{equation*}
\left( a+b-\gamma_n\right) -\left( 2a-\beta_n\right) =b>0\text{;}
\end{equation*}
c) 
\begin{equation*}
2b-\gamma_n=b-bn+a-\frac 12\leq -bn\leq 0\text{;}
\end{equation*}
d) 
\begin{equation*}
\alpha_n-a-b=a-bn-\frac 12\leq -b\left( n+1\right) <0\text{;}
\end{equation*}
e) however, 
\begin{gather*}
\left( a+b+\gamma_n\right) -\left( 1-a+b\right) =a+b\left( n+1\right)
-\frac 12\text{,}\\
1-a+b-\beta_n=\frac 12-a-bn\text{;}
\end{gather*}
hence, if 
\begin{equation*}
\frac{1-2a-2b}{2b}<n<\frac{1-2a}{2b}\text{,}
\end{equation*}
there is a a small rectangle with vertices at $\left( 1-a+b,0\right) $, 
$\left( \sfrac 12+\left( n+2\right) b,0\right) $, $\left( \sfrac 12+\left(
n+2\right) b,\sfrac 12-a-nb\right) $ and $\left( 1-a+b,\sfrac 12-a-nb\right) $.

This rectangle is the translated by $u$ of the rectangle with vertices 
$( a+b,2b) $,
$( -\sfrac 12+( n+2) b+2a,2b)$, 
$( -\sfrac 12+( n+2) b+2a,\sfrac 12-a-nb+2b) $ and 
$( a+b,\sfrac 12-a-nb+2b) $.

\noindent f) As 
\begin{equation*}
\beta_n-\left( -\frac 12+\left( n+2\right) b+2a\right) =1-2a-b\geq 0\text{,}
\end{equation*}
g) and 
\begin{equation*}
\gamma_n-\left( \frac 12-a-nb+2b\right) =\left( 2n-1\right) b\geq 0\text{,}
\end{equation*}
this last rectangle is contained in $\mathcal{R}_n$, for $n\geq 1$.

\smallskip\noindent\textbf{2.} Assume that $a+b\geq \frac 12$ and $a<\frac 12$. Then 
\begin{align*}
\alpha_n &=\left( n+1\right) a-\frac n2,\quad\text{for }n\leq \frac{2a}{1-2a}
\\
&=0,\quad\text{for }n\geq \frac{2a}{1-2a} \\[3pt]
\beta_n &=2b-\left( n-1\right) a+\frac n2,\quad\text{for }1\leq n\leq 
    \frac{2-2a-4b}{1-2a} \\
&=1,\quad\text{for }n\geq \frac{2-2a-4b}{1-2a} \\[3pt]
\gamma_n &=-\left( n+1\right) a+b+\frac{n+1}2,\quad\text{for }n\leq \frac
1{1-2a} \\
&=1-a+b,\quad\text{for }n\geq \frac 1{1-2a}.
\end{align*}

Notice that\\
a) 
\begin{equation*}
2-2a-4b\leq 2a<1\text{;}
\end{equation*}
b) for every $n\leq \frac{2a}{1-2a}$
\begin{equation*}
\left( 1+2b-\alpha_n\right) -\left( a+b+\gamma_n\right) =\frac 12-a>0\text{;}
\end{equation*}
c) for every $n\leq \frac 1{1-2a}$
\begin{equation*}
1-2a-\gamma_n=\frac 12\left( 2a-1\right) \left( n-1\right) -b\leq 0\text{;}
\end{equation*}
d) for every $1\leq n\leq \frac{2-2a-4b}{1-2a}$
\begin{equation*}
a+b-\beta_n=\frac 12n\left( 2a-1\right) -b<0\text{;}
\end{equation*}
e) however, 
\begin{gather*}
\left( a+b-\gamma_n\right) -\left( a-b\right) =b+\frac 12\left( 2a-1\right)
\left( n+1\right) \text{,}\displaybreak[0]\\
-a+b+\alpha_n=b+\frac 12n\left( 2a-1\right) \text{;}
\end{gather*}
hence, if 
\begin{equation*}
\frac{2a+2b-1}{1-2a}<n<\frac{2b}{1-2a}\text{,}
\end{equation*}
there is a a small rectangle with vertices at $( a-\sfrac 12(
1-2a) ( n+1) ,0) $, $( a-b,0) $, $(
a-b,b+(\sfrac 12)n( 2a-1) ) $ and $( a-\sfrac 12(
1-2a) ( n+1) ,b+(\sfrac 12)n( 2a-1) ) $.

This rectangle is the rotation by $\sigma_2$ of the rectangle with vertices 
$( 1-a-b,0) $, 
$( 1-a-(\sfrac 12)n( 1-2a) ,0) $, 
$( 1-a-(\sfrac 12)n( 1-2a) ,-b+\sfrac 12( 1-2a)
    ( n+1) ) $ 
and 
$( 1-a-b,-b+\sfrac 12( 1-2a) ( n+1) ) $.

As\\
f) 
\begin{equation*}
\left( 1-a+b-\alpha_n\right) -\left( -b+\frac 12\left( 1-2a\right) \left(
n+1\right) \right) \\
= \frac 12-a+2b\geq 0\text{,}
\end{equation*}
g) 
\begin{equation*}
\begin{split}
\left( a+b+\gamma_n\right) -\left( 1-a-\frac 12n\left( 1-2a\right) \right)
&= a+2b-\frac 12+n\left( 1-2a\right) \\
&> 3a+4b-\frac 32\geq b\text{;}
\end{split}
\end{equation*}
this last rectangle is contained in the red rectangle with vertex $P_4$.

\smallskip\noindent\textbf{3.} Assume that $a<\sfrac 12$ and $b=0$.

In this case $u=\left( 1-2a,0\right) $. The generating region, \textbf{gr},
of Figure \ref{fig:22} 
is the square with vertices at $\left( a,0\right) $, $\left(
\sfrac 12,0\right) $, $\left( \sfrac 12,\sfrac 12-a\right) $ and $\left(
a,\sfrac 12-a\right) $. Rotating it around the center $\left( \sfrac 12,\sfrac
12-a\right) $, one obtains another square with vertices at $\left(
a,0\right) $, $\left( 1-a,0\right) $, $\left( 1-a,1-2a\right) $ and $\left(
a,1-2a\right) $. Translating this last square by $pu$, with $p\in \mathbb{Z}$,
one obtains $\left[ 0,1\right] \times \left[ 0,1-2a\right] $. Rotating this
region around the center $\left( \sfrac 12,\sfrac 12-a\right) $, one obtains
the union of $\left[ 0,1\right] \times \left[ 0,1-2a\right] $ with $\left[
a,1-a\right] \times \left[ 0,1-a\right] $. Translating it by $pu$, with 
$p\in \mathbb{Z}$, one obtains $\left[ 0,1\right] \times \left[ 0,1-a\right] $.

\smallskip\noindent\textbf{4.} Assume that $b>0$ and $a=\sfrac 12$.

In this case $u=\left( 0,-2b\right) $. The generating region, \textbf{gr},
of Figure \ref{fig:22} 
is the square with vertices at $\left( \sfrac 12,0\right) $, 
$\left( \sfrac 12+b,0\right) $, $\left( \sfrac 12+b,b\right) $ and $\left(
\sfrac 12,b\right) $. Rotating it around the center $\left( \sfrac 12,b\right) 
$, one obtains another square with vertices at $\left( \sfrac 12-b,0\right) $, 
$\left( \sfrac 12+b,0\right) $, $\left( \sfrac 12+b,2b\right) $ and $\left(
\sfrac 12-b,2b\right) $. Translating this last square by $pu$, with $p\in 
\mathbb{Z}$, one obtains $\left[ \sfrac 12-b,\sfrac 12+b\right] \times \left[
0,1\right] $. Rotating this region around the center $\left( \sfrac
12,b\right) $, one obtains the union of $\left[ \sfrac 12-b,\sfrac 12+b\right]
\times \left[ 0,1\right] $ with $\left[ 0,1\right] \times \left[ 0,2b\right] 
$. Translating it by $pu$, with $p\in \mathbb{Z}$, one obtains $\left[
0,1\right] \times \left[ 0,1\right] $.
\end{proof}

\subsubsection{Case H}

In this case the transformations are a translation by  $u=(1-a+b,1-a-b)$, 
two rotations of order $4$ ($\sigma_0$ and $\sigma_1$) 
with centers located at $\left( a,b\right) $ and $\left( a+b,1-a+b\right) $, 
and a rotation of order $2$ ($\sigma_2$) with center located at $\bigl( 
\frac{1+a+b}2,\frac{1-a+b}2\bigr) $. These transformations are compatible
in the sense that they generate a pattern $p4$ in the plane.

\begin{figure}[htb!]
  \centering
  \includegraphics[width=4.7444in]{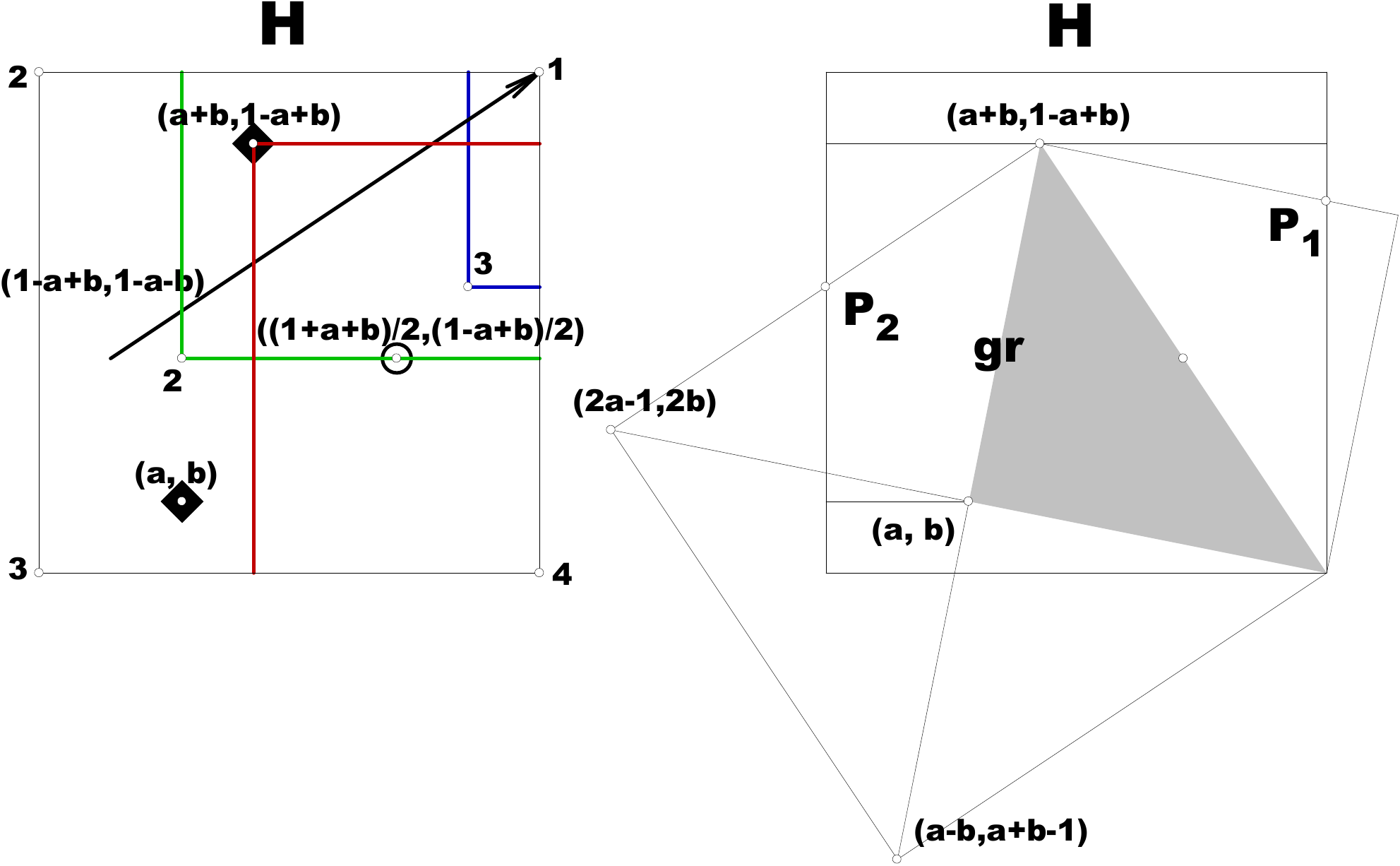}
  \vspace*{-8pt}
  \caption{}
  \label{fig:24}
\end{figure}

\begin{proposition}
The tile is such that it has the pattern $p4$ generated by the four
transformations in the plane ($\mathbb{R}^2$).
\end{proposition}

\begin{proof}
Consider the dashed triangle \textbf{gr} contained in the tile 
(Figure \ref{fig:24}); 
\textbf{gr} is a generating region of the pattern $p4$ in the plane. Its
rotation by $\sigma_0$, $\sigma_0^2$ and $\sigma_0^3$ covers the union of 
$\left[ a,1\right] \times \left[ 0,b\right] $, the quadrilateral with
vertices $\left( 0,b\right) $, $\left( a,b\right) $, $\left(
a+b,1-a+b\right) $ and $P_2$; $P_2=\left( 0,2b+\sfrac{\left( 1-2a\right)
\left( 1-a-b\right) }{(1-a+b)}\right) $. Its rotation by $\sigma_2$ covers
the triangle with vertices $\left( a+b,1-a+b\right) $, $\left( 1,0\right) $
and $P_1$; $P_1=\left( 1,1-a+\sfrac{b^2}{(1-a)}\right) $. The triangle with
vertices $\left( a+b,1-a+b\right) $, $P_1$ and $\left( 1,1-a+b\right) $, is
the rotation by $\sigma_1$ of a triangle in \textbf{gr}. The triangle with
vertices $\left( 0,1-a+b\right) $, $P_2$ and $\left( a+b,1-a+b\right) $, is
the rotation by $\sigma_1^{-1}$ of a triangle in \textbf{gr}.

Hence, the region union of $\left[ a,1\right] \times \left[ 0,b\right] $ and 
$\left[ 0,1\right] \times \left[ b,1-a+b\right] $ is covered. Its rotation
by $\sigma_0^2$ covers $\left[ 0,a\right] \times \left[ 0,b\right] $. The
rotation of $\left[ 0,1\right] \times \left[ 0,1-a+b\right] $ by $\sigma_1$
and $\sigma_1^{-1}$ covers $\left[ 0,1\right] \times \left[ 1-a+b,1\right] $. 
Therefore the proposition is proved.
\end{proof}

\subsubsection{Periodicity of the tile pattern}

Let us briefly point out that all the transformations involving cases A, B,
C, D, E, F, G and H, transform the generating regions (\textbf{gr}, see
Figures \ref{fig:20}--\ref{fig:24}), covering the tile.

Remember now that
\begin{align*}
&\begin{tabular}{@{}ccccc@{}}
$i\rightarrow $ & A & B & C & D \\ 
\midrule
$u_i$ & $\left( \frac 12,\frac 12\right) $ & $\left( 1-b,a\right) $ & 
$\left( \frac 12-a-b,\frac 12+a-b\right) $ & $\left( a,b\right) $ \\ 
$v_i$ & $\left( -\frac 12,\frac 12\right) $ & $\left( -a,1-b\right) $ & 
$\left( -\frac 12-a+b,\frac 12-a-b\right) $ & $\left( -b,a\right) $ \\ 
\end{tabular}\displaybreak[0]\\[5pt]
&\begin{tabular}{@{}ccccc@{}}
$i\rightarrow $ & E & F & G & H \\ 
\midrule
$u_i$ & $\left( \frac 12,\frac 12\right) $ & $\left( a,b\right) $ & $\left(
\frac 12-a+b,\frac 12-a-b\right) $ & $\left( b,1-a\right) $ \\ 
$v_i$ & $\left( -\frac 12,\frac 12\right) $ & $\left( -b,a\right) $ & 
$\left( a+b-\frac 12,\frac 12-a+b\right) $ & $\left( a-1,b\right) $ \\ 
\end{tabular}
\end{align*}

\smallskip\noindent\textbf{1.} For all the cases involving the three
tiles $\left[ 0,1\right]^2$, $\left[ -1,0\right] \times \left[
  0,1\right] $ and $\left[ 0,1\right] \times \left[ -1,0\right] $ one
has the following:
\begin{compactenum}[ a)]
\item For the seven cases AE, AF, AG, AH, BE, CE and DE, one has 
\begin{equation*}
pu_A+qv_A=\frac 12\left( p-q,p+q\right) =p\left( \frac 12,\frac 12\right)
+q\left( -\frac 12,\frac 12\right)
\end{equation*}
or 
\begin{equation*}
pu_E+qv_E=\frac 12\left( p-q,p+q\right) =p\left( \frac 12,\frac 12\right)
+q\left( -\frac 12,\frac 12\right) \text{;}
\end{equation*}
\item BF 
\begin{equation*}
pu_B+qv_B+qu_F-pv_F=\left( p,q\right) =p\left( 1,0\right) +q\left(
0,1\right) \text{;}
\end{equation*}
\item BG 
\begin{equation*}
\left( p+q\right) u_B+\left( q-p\right) v_B+pu_G+qv_G=q\left( \frac 12,\frac
32\right) -p\left( -\frac 32,\frac 12\right) \text{;}
\end{equation*}
\item BH 
\begin{equation*}
pu_B+qv_B+pu_H+qv_H=p\left( 1,1\right) +q\left( -1,1\right) \text{;}
\end{equation*}
\item CF 
\begin{equation*}
pu_C+qv_C+\left( p+q\right) u_F+\left( q-p\right) v_F=p\left( \frac 12,\frac
12\right) +q\left( -\frac 12,\frac 12\right) \text{;}
\end{equation*}
\item CG 
\begin{equation*}
pu_C-qv_C+qu_G+pv_G=q\left( 1,0\right) +p\left( 0,1\right) \text{;}
\end{equation*}
\item CH 
\begin{equation*}
pu_C-qv_C+\left( p+q\right) u_H+\left( p-q\right) v_H=q\left( \frac 32,\frac
12\right) +p\left( -\frac 12,\frac 32\right) \text{;}
\end{equation*}
\item DF 
\begin{equation*}
pu_D+qv_D-qv_F-pu_F=\left( 0,0\right) \text{;}
\end{equation*}
\item DG 
\begin{equation*}
\left( p+q\right) u_D+\left( p-q\right) v_D+pu_G-qv_G=p\left( \frac 12,\frac
12\right) -q\left( -\frac 12,\frac 12\right) \text{;}
\end{equation*}
\item  DH 
\begin{equation*}
pu_D+qv_D+qu_H-pv_H=p\left( 1,0\right) +q\left( 0,1\right) \text{.}
\end{equation*}
\end{compactenum}

Hence, for the nine cases AE, AF, AG, AH, BE, CE, DE, CF and DG the tile has
a pattern with rotation centers of order $4$ at: 
\begin{equation*}
\left( a,b\right) +p\left( \frac 12,\frac 12\right) +q\left( -\frac 12,\frac
12\right) \text{;}
\end{equation*}
for the cases BF, CG, and DH: 
\begin{equation*}
\left( a,b\right) +p\left( 1,0\right) +q\left( 0,1\right) \text{;}
\end{equation*}
for the case BH: 
\begin{equation*}
\left( a,b\right) +p\left( 1,1\right) +q\left( -1,1\right) \text{;}
\end{equation*}
for the case BG: 
\begin{equation*}
\left( a,b\right) +p\left( \frac 12,\frac 32\right) +q\left( -\frac 32,\frac
12\right) \text{;}
\end{equation*}
for the case CH: 
\begin{equation*}
\left( a,b\right) +p\left( \frac 32,\frac 12\right) +q\left( -\frac 12,\frac
32\right) \text{;}
\end{equation*}
with $p,q\in \mathbb{Z}$; for the case DF there is no conclusion.

\smallskip\noindent\textbf{2.} Remember that
\begin{equation*}
\begin{tabular}{@{}ccccc@{}}
$i\rightarrow $ & $\left( c_1=\right) 1$ & $\left( c_1=\right) 2$ & $\left(
c_1=\right) 3$ & $\left( c_1=\right) 4$ \\ 
\midrule
$u_i$ & $\left( 1,0\right) $ & $\left( \frac 12-b,\frac 12+a\right) $ & 
$\left( a-b,a+b\right) $ & $\left( \frac 12+b,\frac 12-a\right) $ \\ 
$v_i$ & $\left( 0,1\right) $ & $\left( -\frac 12-a,\frac 12-b\right) $ & 
$\left( -a-b,a-b\right) $ & $\left( a-\frac 12,\frac 12+b\right) $ \\ 
\end{tabular}
\end{equation*}

In the following the notations BG$i$, CH$i$, DF$i$, mean that $c_1=i$; also 
$u=u_D=u_F$ and $v=v_D=v_F$ (see Figures \ref{fig:16} and \ref{fig:17}).

For all the cases involving the three tiles $\left[ 0,1\right]^2$, $\left[
-1,0\right] \times \left[ 0,1\right] $, $\left[ 0,1\right] \times \left[
-1,0\right] $ and $\left[ -1,0\right] \times \left[ -1,0\right] $ one has
the following:
\begin{compactenum}[ a)]
\item For the three cases BG$1$, CH$1$ and DF$1$, one has 
\begin{equation*}
pu_1+qv_1=\left( p,q\right) =p\left( 1,0\right) +q\left( 0,1\right) \text{;}
\end{equation*}
\item BG2 
\begin{gather*}
pu_B+qv_B-pu_2-qv_2=q\left( \frac 12,\frac 12\right) +p\left( \frac
12,-\frac 12\right) \text{;}\displaybreak[0]\\
pu_G+qv_G+\left( p+q\right) u_2+\left( q-p\right) v_2=p\left( \frac 32,\frac
12\right) +q\left( -\frac 12,\frac 32\right) \text{;}
\end{gather*}
\item BG3 
\begin{gather*}
\left( q-p\right) u_B+\left( p+q\right) v_B+pu_3-qv_3=q\left( 1,1\right)
+p\left( -1,1\right) \text{;}\displaybreak[0]\\
pu_G+qv_G+pu_3+qv_3=p\left( \frac 12,\frac 12\right) +q\left( -\frac
12,\frac 12\right) \text{;}
\end{gather*}
\item BG4 
\begin{gather*}
pu_B+qv_B+pu_4+qv_4=p\left( \frac 32,\frac 12\right) +q\left( -\frac
12,\frac 32\right) \text{;}\displaybreak[0]\\
pu_G+qv_G-\left( p+q\right) u_4+\left( p-q\right) v_4=p\left( -\frac
12,\frac 12\right) -q\left( \frac 12,\frac 12\right) \text{;}
\end{gather*}
\item CH2 
\begin{gather*}
pu_H+qv_H+pu_2+qv_2=p\left( \frac 12,\frac 32\right) +q\left( -\frac
32,\frac 12\right) \text{;}\displaybreak[0]\\
-pu_C+qv_C+\left( p+q\right) u_2+\left( p-q\right) v_2=q\left( \frac
12,\frac 12\right) +p\left( -\frac 12,\frac 12\right) \text{;}
\end{gather*}
\item CH3 
\begin{gather*}
\left( p+q\right) u_H+\left( -p+q\right) v_H+pu_3+qv_3=p\left( 1,1\right)
+q\left( -1,1\right) \text{;}\displaybreak[0]\\
pu_C+qv_C+qu_3-pv_3=p\left( \frac 12,\frac 12\right) +q\left( -\frac
12,\frac 12\right) \text{;}
\end{gather*}
\item CH4 
\begin{gather*}
pu_H-qv_H-pu_4+qv_4=q\left( \frac 12,\frac 12\right) +p\left( -\frac
12,\frac 12\right) \text{;}\displaybreak[0]\\
pu_C+qv_C+\left( p-q\right) u_4+\left( p+q\right) v_4=p\left( \frac 12,\frac
32\right) +q\left( -\frac 32,\frac 12\right) \text{;}
\end{gather*}
\item DF2 
\begin{equation*}
pu-qv+qu_2+pv_2=q\left( \frac 12,\frac 12\right) +p\left( -\frac 12,\frac
12\right) \text{;}
\end{equation*}
\item DF3 
\begin{equation*}
\left( q-p\right) u-\left( p+q\right) v+pu_3+qv_3=\left( 0,0\right) \text{;}
\end{equation*}
\item DF4 
\begin{equation*}
-pu+qv+qu_4+pv_4=q\left( \frac 12,\frac 12\right) +p\left( -\frac 12,\frac
12\right) \text{.}
\end{equation*}
\end{compactenum}

Hence, for the eleven cases BG1, CH1, DF1, BG2, BG3, BG4, CH2, CH3, CH4, DF2
and DF4 the tile has a pattern with rotation centers of order $4$ at: 
\begin{equation*}
\left( a,b\right) +p\left( \frac 12,\frac 12\right) +q\left( -\frac 12,\frac
12\right) \text{;}
\end{equation*}
with $p,q\in \mathbb{Z}$; for the case DF3 there is no conclusion.

\smallskip\noindent\textbf{3.} In the following the notation DF$3i$,
means that $c_1=3$ and $d_1=i$; also, $u=u_D=u_F$, $v=v_D=v_F$,
$u_i=u_{DF3i}$ and $v_i=v_{DF3i}$ (see Figure \ref{fig:17}).

\begin{equation*}
\begin{tabular}{@{}ccccc@{}}
\toprule
& DF31 & DF32 & DF33 & DF34 \\ 
\cmidrule(l){2-5}
$4$ & 
\cellboxt{$(a+b-\sfrac 12,\\ \xqquad\sfrac 12-a+b)$} & 
$\left( 1-b,a\right) $ & 
\cellboxt{$(\sfrac 12-a+b,\\ \xqquad\sfrac 32-a-b)$} & 
$\left( 1-a,1-b\right) $ \\ \addlinespace
$4$ & 
$( a-\sfrac 12,\sfrac 12+b) $ & 
$\left( 1-a,-b\right) $ & 
$\left( \sfrac 12+b,\sfrac 32-a\right) $ & 
$\left( 1+b,1-a\right) $ \\ \addlinespace
$4$ & 
- & 
\cellboxt{$( 1-a-b,\\ \xqqquad a-b)$} & 
- & 
\cellboxt{$( 1-a+b,\\ \xqquad 1-a-b)$} \\ \addlinespace
$2$ & 
- & 
- & 
$\Bigl( 1-\frac{a+b}2,\frac{1+a-b}2\Bigr) $ & 
$\left(b,1-a\right) $ \\ \addlinespace
$2$ & 
- & 
- & 
$\Bigl( 1+\frac{b-a}2,\frac{1-a-b}2\Bigr) $ & 
$\Bigl( \frac{a+b}2,1+\frac{b-a}2\Bigr) $ \\ \addlinespace
2 & 
- & 
- & 
$\Bigl( 1-a,\frac 12-b\Bigr) $ & 
- \\ \addlinespace
tr & 
\cellboxt{$( 1-a-b,\\ \xqqquad a-b)$} & 
$\left( 2b-1,1-2a\right) $ & 
- & 
- \\ \addlinespace

tr & 
\cellboxt{$( a-b-1,\\ \xqqquad a+b)$} & 
\cellboxt{$( a+b-1,\\ \xqquad 1-a+b)$} & 
- & 
- \\ \addlinespace
tr & 
$\left( 2a-1,2b\right) $ & 
- & 
- & 
- \\ 
\bottomrule
\end{tabular}
\end{equation*}

\begin{equation*}
\begin{tabular}{@{}ccccc@{}}
$i\rightarrow $ & 
$\left( d_1=\right) 1$ & 
$\left( d_1=\right) 2$ & 
$\left(d_1=\right) 3$ & 
$\left( d_1=\right) 4$ \\ 
\midrule
$u_i$ & 
\cellboxt{$(\sfrac 12-a+b,\\ \xquad\sfrac 12-a-b)$} & 
\cellboxt{$(1-2a-b,\\ \xquad a-2b)$} & 
\cellboxt{$(\sfrac 12-2a+2b,\\ \xquad\sfrac 32-2a-2b)$} & 
\cellboxt{$(1-2a+b,\\ \xquad 1-a-2b)$} \\ \addlinespace
$v_i$ & 
\cellboxt{$(a+b-\sfrac 12,\\ \xquad\sfrac 12-a+b)$} & 
\cellboxt{$(2b-a,\\ \xquad 1-2a-b)$} & 
\cellboxt{$(2a+2b-\sfrac 32,\\ \xquad\sfrac 12-2a+2b)$} & 
\cellboxt{$(a+2b-1,\\ \xquad 1-2a+b)$} \\ 
\end{tabular}
\end{equation*}

Notice that
\begin{gather*}
v_1-u_1=\left( 2a-1,2b\right),\quad
u_2=\left( 1-a-b,a-b\right) -\left( a,b\right) , \\
\left( 1-a,\tfrac 12-b\right) -\left( a,b\right)
  =\left( 1-2a,\tfrac 12-2b\right) , \\
u_3=\left( 1-2a,\tfrac 12-2b\right) +\left( 2b-\tfrac 12,1-2a\right) , \\
u_4=\left( 1-a+b,1-a-b\right) -\left(a,b\right) .
\end{gather*}

\begin{compactenum}[ a)]
\item DF31
\begin{equation*}
\left( p-q\right) u+\left( p+q\right) v+pu_1+qv_1=p\left( \frac 12,\frac
12\right) +q\left( -\frac 12,\frac 12\right) \text{;}
\end{equation*}
\item DF32
\begin{equation*}
\left( 2p+q\right) u+\left( 2q-p\right) v+pu_2+qv_2=p\left( 1,0\right)
+q\left( 0,1\right) \text{;}
\end{equation*}
\item DF33
\begin{equation*}
\left( 2p-2q\right) u+\left( 2p+2q\right) v+pu_3+qv_3=p\left( \frac 12,\frac
32\right) +q\left( -\frac 32,\frac 12\right) \text{;}
\end{equation*}
\item DF34
\begin{equation*}
\left( 2p-q\right) u+\left( p+2q\right) v+pu_4+qv_4=p\left( 1,1\right)
+q\left( -1,1\right) \text{.}
\end{equation*}
\end{compactenum}

Hence, for the case DF31, the tile has a pattern with rotation centers of
order $4$ at: 
\begin{equation*}
\left( a,b\right) +p\left( \frac 12,\frac 12\right) +q\left( -\frac 12,\frac
12\right) \text{;}
\end{equation*}
for the case DF32, the tile has a pattern with rotation centers of order $4$
at: 
\begin{equation*}
\left( a,b\right) +p\left( 1,0\right) +q\left( 0,1\right) \text{;}
\end{equation*}
for the case DF34, the tile has a pattern with rotation centers of order $4$
at: 
\begin{equation*}
\left( a,b\right) +p\left( 1,1\right) +q\left( -1,1\right) \text{;}
\end{equation*}
with $p,q\in \mathbb{Z}$; for the case DF33 there is no conclusion.

\smallskip\noindent\textbf{4.} In the following the notation DF$33i$, means
that $c_1=3$, $d_1=3$ and 
$e_i=i $; also, $u=u_D=u_F$, $v=v_D=v_F$, $u_i=u_{DF33i}$ and $v_i=v_{DF33i}$
(see Figure \ref{fig:17}).

\begin{equation*}
\begin{tabular}{@{}ccccc@{}}
\toprule
& DF331 & DF332 & DF333 & DF334 \\
\cmidrule(l){2-5}
$4$ & 
\cellboxt{$( a+b,\\ \xquad 1-a+b)$} & 
$\left( \frac 12-b,a-\frac 12\right) $ & 
\cellboxt{$(1-a+b,\\ \xquad 1-a-b)$} & 
$\left( \frac 32-a,\frac 12-b\right) $ \\ \addlinespace 
$4$ & 
$\left( a,1+b\right) $ & 
- & 
$\left( 1+b,1-a\right) $ & 
- \\ \addlinespace 
$4$ & 
$\left( b,1-a\right) $ & 
- & 
$\left( 1-a,1-b\right) $ & 
- \\ \addlinespace 
$2$ & 
- & 
- & 
$\left( 1-\frac{a+b}2,\frac{a-b}2\right) $ & 
$\left( \frac12+b,1-a\right) $ \\ \addlinespace 
$2$ & 
- & 
- & 
- & 
$\left( \frac{1+a+b}2,1+\frac{b-a}2\right) $ \\ \addlinespace 
$2$ & 
- & 
- & 
- & 
$\left( \frac{1-a+b}2,1-\frac{a+b}2\right) $ \\ \addlinespace 
tr & 
\cellboxt{$(a+b-1,\\ \xquad 1-a+b) $} & 
$\left( 2b,1-2a\right) $ & 
- & 
- \\ \addlinespace 
tr & 
- & 
$\left( a+b,1-a+b\right) $ & 
- & 
- \\ \addlinespace 
tr & 
- & 
$\left( b-a,1-a-b\right) $ & 
- & 
- \\
\bottomrule 
\end{tabular}
\end{equation*}

\begin{tabular}{@{}ccccc@{}}
$i\rightarrow $ & 
$( e_1=) 1$ & 
$( e_1=) 2$ & 
$(e_1=) 3$ & 
$( e_1=) 4$ \\ 
\midrule
$u_i$ & 
$( b,1-a) $ & 
\cellboxt{$( \sfrac 12-a+b,\\ \xqquad\sfrac 12-a-b) $} & 
\cellboxt{$( 1-2a+b,\\ \xqquad 1-a-2b) $} & 
$( \sfrac 32-2a,\sfrac 12-2b) $ \\ \addlinespace
$v_i$ & 
$( a-1,b) $ & 
\cellboxt{$( a+b-\sfrac 12,\\ \xqquad\sfrac 12-a+b) $} & 
\cellboxt{$( a+2b-1,\\ \xqquad 1-2a+b) $} & 
$( 2b-\sfrac 12,\sfrac 32-2a) $ \\ 
\end{tabular}

Notice that
\begin{gather*}
u_1=\left( a+b,1-a+b\right) -\left( a,b\right) , \quad
u_2+v_2=\left( 2b,1-2a\right) ,\\
u_3=\left( 1-a+b,1-a-b\right) -\left(a,b\right) ,\\
\left( \tfrac 12+b,1-a\right) -\left( a,b\right) =\left( \tfrac
12-a+b,1-a-b\right) ,\\
u_4=\left( \tfrac 12-a+b,1-a-b\right) +\left(
1-a-b,a-b-\tfrac 12\right) .
\end{gather*}

\begin{compactenum}[ a)]
\item DF331 
\begin{equation*}
pu+qv+qu_1-pv_1=p\left( 1,0\right) +q\left( 0,1\right) \text{;}
\end{equation*}
\item DF332 
\begin{equation*}
\left( p-q\right) u+\left( p+q\right) v+pu_2+qv_2=p\left( \frac 12,\frac
12\right) +q\left( -\frac 12,\frac 12\right) \text{;}
\end{equation*}
\item DF333 
\begin{equation*}
\left( 2p-q\right) u+\left( p+2q\right) v+pu_3+qv_3=p\left( 1,1\right)
+\left( -1,1\right) \text{;}
\end{equation*}
\item DF334 
\begin{equation*}
2pu+2qv+pu_4+qv_4=p\left( \frac 32,\frac 12\right) +q\left( -\frac 12,\frac
32\right) \text{;}
\end{equation*}
\end{compactenum}

\begin{equation*}
-pu_{DF33}-qv_{DF33}+\left( p-q\right) u_4+\left( p+q\right) v_4=p\left(
\frac 12,\frac 12\right) +q\left( -\frac 12,\frac 12\right) \text{.}
\end{equation*}

Hence, for the cases DF332 and DF334, the tile has a pattern with rotation
centers of order $4$ at: 
\begin{equation*}
\left( a,b\right) +p\left( \frac 12,\frac 12\right) +q\left( -\frac 12,\frac
12\right) \text{;}
\end{equation*}
for the case DF331, the tile has a pattern with rotation centers of order $4$
at: 
\begin{equation*}
\left( a,b\right) +p\left( 1,0\right) +q\left( 0,1\right) \text{;}
\end{equation*}
for the case DF333, the tile has a pattern with rotation centers of order $4$
at: 
\begin{equation*}
\left( a,b\right) +p\left( 1,1\right) +q\left( -1,1\right) \text{;}
\end{equation*}
with $p,q\in \mathbb{Z}$.

Hence, we have proved

\begin{proposition}
Let $\left( a,b\right) $ the rotation center of order $4$ with $0\leq b\leq
a\leq \sfrac 12$, $a>0$, $\left( a,b\right) \neq \left( \sfrac 12,0\right) $.
Consider the tiles $\left[ 0,1\right]^2$ and $\left[ 0,1\right] \times
\left[ -1,0\right] $, and their four possibilities: E, F, G and H. The tile
has a pattern $p_4$ with the following possible translation invariances:
\begin{compactenum}[ 1)]
\item F, G and H: $p\left( 1,0\right) +q\left( 0,1\right) $, \; $p,q\in \mathbb{Z}$.
\item F and H: $p\left( 1,1\right) +q\left( -1,1\right) $, \; $p,q\in \mathbb{Z}$.
\item E, F, G and H: $p\left( \frac 12,\frac 12\right) +q\left( -\frac 12,\frac
12\right) $, \; $p,q\in \mathbb{Z}$.
\end{compactenum}
\end{proposition}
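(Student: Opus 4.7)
The plan is to read the proposition as a consolidation of the identities established in the four numbered parts of the \emph{Periodicity of the tile pattern} subsection. For each configuration code (the letter pairs AE, \ldots, DH in part~1, then BG$i$, CH$i$, DF$i$ for $i\in\{1,2,3,4\}$ in part~2, then DF3$i$ in part~3, and finally DF33$i$ in part~4), the preceding computations exhibit one or more explicit integer linear combinations of the associated $u_?,v_?$ vectors that equal $p\tau_1+q\tau_2$ for some basis $(\tau_1,\tau_2)$ of one of the three lattices
\begin{equation*}
L_1=\mathbb{Z}(1,0)+\mathbb{Z}(0,1),\quad L_2=\mathbb{Z}(1,1)+\mathbb{Z}(-1,1),\quad L_3=\mathbb{Z}\bigl(\tfrac12,\tfrac12\bigr)+\mathbb{Z}\bigl(-\tfrac12,\tfrac12\bigr).
\end{equation*}
Each such identity exhibits an honest translation symmetry of the pattern, because the $u_?,v_?$ are the generating translations of the $p4$-subpattern associated to that configuration.

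The main step is then to group the identities by the choice E, F, G, or H of the lower tile and to record which of $L_1,L_2,L_3$ is produced. For E one has $u_E=(\tfrac12,\tfrac12)$, $v_E=(-\tfrac12,\tfrac12)$ directly, so E contributes only to item~(3). For F the identities for AF and CF give $L_3$, BF gives $L_1$, and the DF branch, after successive subdivision, furnishes $L_2$ (through DF34 in part~3 and DF333 in part~4) as well as $L_1$ and $L_3$; hence F appears in items~(1), (2), and (3). An analogous scan, using AG, CG, DG together with the BG$i$ subdivisions, places G into items~(1) and (3) but never into (2); and using AH, BH, DH together with the CH$i$ subdivisions places H into all three items. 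Comparison with the statement confirms the lists.

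The main obstacle is organisational rather than mathematical: the D-on-left/F-below combination refuses to close at any single level and must be refined through DF $\rightarrow$ DF1--4 $\rightarrow$ DF31--34 $\rightarrow$ DF331--334, each refinement specifying in turn the diagonal tile index $c_1$, the upper-right neighbour index $d_1$, and the lower-right neighbour index $e_1$. At every leaf of this tree an explicit integer combination is produced that lies in one of $L_1,L_2,L_3$, and no transient sublattice that appears at an intermediate stage (such as those spanned by $(\tfrac12,\tfrac32),(-\tfrac32,\tfrac12)$ in BG or by $(\tfrac32,\tfrac12),(-\tfrac12,\tfrac32)$ in CH) ever survives the final refinement. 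Once the tree has been laid out in full, no further argument is required: the proposition is a direct inspection of which of $L_1,L_2,L_3$ occurs at each leaf, tabulated against the choice of lower tile.
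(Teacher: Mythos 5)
Your proposal is correct and follows essentially the same route as the paper: the proposition is indeed obtained by tabulating, for each lower-tile letter E, F, G, H, which of the three lattices arises from the integer combinations of the generating translations computed in parts 1--4 of the periodicity subsection, with the unresolved cases BG, CH and DF refined through the diagonal and neighbouring tiles until every leaf of the case tree lands in one of the three lattices. Your reading of the leaves (E contributing only the lattice generated by $\left(\frac 12,\frac 12\right)$ and $\left(-\frac 12,\frac 12\right)$; G never producing the lattice generated by $\left(1,1\right)$ and $\left(-1,1\right)$; F and H realising all three) matches the paper's conclusion exactly.
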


\subsubsection{Mathematical classification of tiles in the general case}

Consider a tile in $\mathbb{R}^2$ like it is shown in Figure \ref{fig:07}. 
Let $n$ the
number of rotation centers of order $4$ in the tile. Assume that there are
rotation centers of order $4$ located at $\left( a,b\right) $ and $\left(
a+\alpha ,b+\beta \right) $, with $\alpha >0$ and $\beta \geq 0$, and that,
in these conditions, $\left( a+\alpha ,b+\beta \right) $ is the nearest from 
$\left( a,b\right) $. Then, the rotation centers of order $4$ are located at 
\begin{equation*}
\left( a,b\right) +r\left( \alpha ,\beta \right) +s\left( -\beta ,\alpha
\right) \text{,}
\end{equation*}
with $r,s\in \mathbb{Z}$. The number of rotation centers in the tile is 
$n=\frac 1{\alpha^2+\beta^2}$.

We consider three possibilities.

\smallskip\noindent\textbf{1}. There is a rotation center at $\left(
  a+1,b\right) $ (as already seen this can be the situation in the
cases F, G and H; we call them F1, G1 and H1). Then, there are $p,q\in
\mathbb{Z}$ such that
\begin{equation*}
p\left( \alpha ,\beta \right) +q\left( -\beta ,\alpha \right) =\left(
1,0\right) \text{;}
\end{equation*}
hence 
\begin{equation*}
\alpha =\frac p{p^2+q^2}\text{, }\beta =\frac{-q}{p^2+q^2}\text{,}
\end{equation*}
$n=p^2+q^2$ and the centers are located at the points. 
\begin{equation*}
\left( a,b\right) +\frac 1{p^2+q^2}\left( rp+sq,sp-rq\right) \text{.}
\end{equation*}

\smallskip\noindent\textbf{2}. There is a rotation center at $\left( a+1,b+1\right) $ (as
already seen this can be the situation in the cases F and H; we call them F2
and H2). Then, there are $p,q\in \mathbb{Z}$ such that 
\begin{equation*}
p\left( \alpha ,\beta \right) +q\left( -\beta ,\alpha \right) =\left(
1,1\right) \text{;}
\end{equation*}
hence 
\begin{equation*}
\alpha =\frac{p+q}{p^2+q^2}\text{, }\beta =\frac{p-q}{p^2+q^2}\text{,}
\end{equation*}
$n=\frac{p^2+q^2}2$ and the centers are located at the points 
\begin{equation*}
\left( a,b\right) +\frac 1{p^2+q^2}\left( r\left( p+q\right) +s\left(
q-p\right) ,s\left( p+q\right) +r\left( p-q\right) \right) \text{.}
\end{equation*}

\smallskip\noindent\textbf{3}. There is a rotation center at $\left( a+\frac 12,b+\frac
12\right) $ (as already seen this can be the situation in the cases E, F, G
and H; we call them E3, F3, G3 and H3). Then, there are $p,q\in \mathbb{Z}$
such that 
\begin{equation*}
p\left( \alpha ,\beta \right) +q\left( -\beta ,\alpha \right) =\left( \frac
12,\frac 12\right) \text{;}
\end{equation*}
hence 
\begin{equation*}
\alpha =\frac{p+q}{2\left( p^2+q^2\right) }\text{, }
\beta =\frac{p-q}{2\left( p^2+q^2\right) }\text{,}
\end{equation*}
$n=2\left( p^2+q^2\right) $ and the centers are located at the points 
\begin{equation*}
\left( a,b\right) +\frac 1{2\left( p^2+q^2\right) }\left( r\left( p+q\right)
+s\left( q-p\right) ,s\left( p+q\right) +r\left( p-q\right) \right) \text{.}
\end{equation*}

\textbf{These are always in the first type of general tiles}. The cases E3,
F3, G3 and H3 are in the first type of general tiles.

Hence, let us now consider the other five cases: F1, F2, G1, H1, H2.

Remember that 
\begin{equation*}
\begin{tabular}{@{}ccccc@{}}
$i\rightarrow $ & E & F & G & H \\ 
\midrule
$u_i$ & $\left( \frac 12,\frac 12\right) $ & $\left( a,b\right) $ & $\left(
\frac 12-a+b,\frac 12-a-b\right) $ & $\left( b,1-a\right) $ \\ \addlinespace
$v_i$ & $\left( -\frac 12,\frac 12\right) $ & $\left( -b,a\right) $ & 
$\left( a+b-\frac 12,\frac 12-a+b\right) $ & $\left( a-1,b\right) $ \\ 
\end{tabular}
\end{equation*}
and let $\left( x,y\right) $ the coordinates of the rotation centers of
order $4$. In the following we use the notations 
$p,q,p_i,q_i,r,s,r_{i,}s_i\in \mathbb{Z}$, $i=0,1,\ldots $

\smallskip\noindent\textbf{F1.} In this case
\begin{align*}
\left( x,y\right) &= \left( a,b\right) +\frac 1{p^2+q^2}\left(
rp+sq,sp-rq\right) \text{,} \\
&\qquad \left( a,b\right) 
    =\left( \frac{r_0p+s_0q}{p^2+q^2},\frac{s_0p-r_0q}{p^2+q^2}\right) \\
\left( x,y\right) &=\frac 1{p^2+q^2}\left( \left( r+r_0\right) p+\left(
s+s_0\right) q,\left( s+s_0\right) p-\left( r+r_0\right) q\right) \text{,} \\
\left( x,y\right) &=\frac 1{p^2+q^2}\left( r_1p+s_1q,s_1p-r_1q\right) \text{.}
\end{align*}
These are included in the first and second types of general tiles.

\smallskip\noindent\textbf{F2.} In this case
\begin{align*}
\left( x,y\right) &=\left( a,b\right) +\frac 1{p^2+q^2}\left( r\left(
p+q\right) +s\left( q-p\right) ,s\left( p+q\right) +r\left( p-q\right)
\right) \text{,} \\
\left( x,y\right) &=\frac 1{p^2+q^2}\bigl( \left( r+r_0\right) \left(
p+q\right) +\left( s+s_0\right) \left( q-p\right),\\
&\qquad\qquad\qquad\left( s+s_0\right)
\left( p+q\right) +\left( r+r_0\right) \left( p-q\right) \bigr) \text{,}\\
\left( x,y\right) &=\frac 1{p^2+q^2}\left( r_1\left( p+q\right) +s_1\left(
q-p\right) ,s_1\left( p+q\right) +r_1\left( p-q\right) \right) \text{.}
\end{align*}
Let $p_1=p+q$ and $q_1=q-p$. Then $p=\frac{p_1-q_1}2$ and $q=\frac{p_1+q_1}2$
and 
\begin{equation*}
\left( x,y\right) =\frac 2{p_1^2+q_1^2}\left(
r_1p_1+s_1q_1,s_1p_1-r_1q_1\right) \text{.}
\end{equation*}
\begin{compactenum}[ a)]
\item If $p+q$ is odd, then $p_1$ and $q_1$ are odd numbers and these are the
fourth type of general tiles.
\item If $p+q$ is even, then $p_1$ and $q_1$ are even numbers. Let $p_1=2p_2$
and $q_1=2q_2$; then 
\begin{equation*}
\left( x,y\right) =\frac 1{p_2^2+q_2^2}\left(
r_1p_2+s_1q_2,s_1p_2-r_1q_2\right) \text{.}
\end{equation*}
These are included in the first and second types of general tiles.
\end{compactenum}

\smallskip\noindent\textbf{G1.} In this case
\begin{gather*}
\left( x,y\right) =\left( a,b\right) +\frac 1{p^2+q^2}\left(
rp+sq,sp-rq\right) \text{,}\\
\left( a,b\right) =\left( \frac 12+\frac 12\frac{-\left( r_0+s_0\right)
p+q\left( r_0-s_0\right) }{p^2+q^2},\frac 12\frac{\left( r_0-s_0\right)
p+q\left( r_0+s_0\right) }{p^2+q^2}\right)\text{.}
\end{gather*}
\begin{compactenum}[ a)]
\item If $p+q$ is even, these are included in the first type of general tiles.
\item If $p+q$ is odd,
\begin{compactenum}[(i)]
\item if $r_0+s_0$ is even, let 
\begin{gather*}
r_1=r-\frac{r_0+s_0}2\text{,}\quad s_1=s+\frac{r_0-s_0}2\text{,}\\
\left( x,y\right) =\left( \frac 12,0\right) +\frac 1{p^2+q^2}\left(
r_1p+s_1q,s_1p-r_1q\right) \text{.}
\end{gather*}
\item if $r_0+s_0$ is odd, then $r_0+s_0-p-q$ and $r_0-s_0-p+q$ are even. Let 
\begin{equation*}
r_1=\frac{r_0+s_0-p-q}2\text{,}\quad s_1=\frac{r_0-s_0-p+q}2\text{;}
\end{equation*}
then $r_0+s_0=2r_1+p+q$ and $r_0-s_0=2s_1+p-q$ and
\begin{equation*}
\left( a,b\right) =\left( \frac{-pr_1+qs_1}{p^2+q^2},
      \frac 12+\frac{ps_1+qr_1}{p^2+q^2}\right) \text{.}
\end{equation*}
Let $r_2=r-r_1$ and $s_2=s+s_1$;
\begin{equation*}
\left( x,y\right) =\left( 0,\frac 12\right) +\frac 1{p^2+q^2}\left(
r_2p+s_2q,s_2p-r_2q\right) \text{.}
\end{equation*}
\end{compactenum}
This case b) represents the third type of general tiles.
\end{compactenum}

\smallskip\noindent\textbf{H1.} In this case
\begin{gather*}
\left( x,y\right) =\left( a,b\right) +\frac 1{p^2+q^2}\left(
rp+sq,sp-rq\right) \text{,}\\
\left( a,b\right) =\left( 1-\frac{s_0p-r_0q}{p^2+q^2},
   \frac{r_0p+s_0q}{p^2+q^2}\right) \text{.}
\end{gather*}
\begin{compactenum}[ a)]
\item If $p+q$ is odd, these are included in the first type of general tiles.
\item If $p+q$ is even, let 
\begin{equation*}
r_1=r+p-s_0\text{,}\quad s_1=s+q+r_0\text{;}
\end{equation*}
\begin{equation*}
\left(x,y\right)=\frac 1{p^2+q^2}\left( r_1p+s_1q,s_1p-r_1q\right) \text{.}
\end{equation*}
These represent the second type of general tiles.
\end{compactenum}

\smallskip\noindent\textbf{H2.} In this case
\begin{gather*}
\left( x,y\right) =\left( a,b\right) +\frac 1{p^2+q^2}\left( r\left(
p+q\right) +s\left( q-p\right) ,s\left( p+q\right) +r\left( p-q\right)
\right) \text{,}\\ 
\left( a,b\right) =\left( 1-\frac{s_0\left( p+q\right) +r_0\left( p-q\right)}
                                 {p^2+q^2},
    \frac{r_0\left( p+q\right) +s_0\left( q-p\right) }{p^2+q^2}\right) \text{.}
\end{gather*}
Let
\begin{gather*}
r_1=r-s_0\text{,}\quad s_1=s+r_0\text{.}\\
\left( x,y\right) =\left( 1,0\right) +\frac 1{p^2+q^2}\left( r_1\left(
p+q\right) +s_1\left( q-p\right) ,s_1\left( p+q\right) +r_1\left( p-q\right)
\right) \text{.}
\end{gather*}
Let $p_1=p+q$ and $q_1=q-p$. Then $p=\frac{p_1-q_1}2$ and $q=\frac{p_1+q_1}2$
\begin{equation*}
\left( x,y\right) =\left( 1,0\right) +\frac 2{p_1^2+q_1^2}\left(
r_1p_1+s_1q_1,s_1p_1-r_1q_1\right) \text{.}
\end{equation*}
Notice that $p_1$ and $q_1$ are both even or both odd.
\begin{compactenum}[ a)]
\item If $p_1$ and $q_1$ are both odd, these are the fourth type of general
tiles.
\item If $p_1$ and $q_1$ are both even, let $p_1=2p_2$ and $q_1=2q_2$. Then
\begin{equation*}
\left( x,y\right) =\left( 1,0\right) +\frac 1{p_2^2+q_2^2}\left(
r_1p_2+s_1q_2,s_1p_2-r_1q_2\right) \text{.}
\end{equation*}
These are included in the first and second types of general tiles.
\end{compactenum}

\subsection{The exceptions}

Assume that the rotation center of order $4$ is in the middle of the edge of
one tile. Divide the tile in four identical squares $a_1$-$a_2$-$a_3$-$a_4$, 
$a_2$-$a_3$-$a_4$-$a_1$, $b_1$-$b_2$-$b_3$-$b_4$ and $c_1$-$c_2$-$c_3$-$c_4$, 
like it is shown in Figure \ref{fig:25}.

\begin{figure}[htb!]
  \centering
  \includegraphics[width=2.0323in]{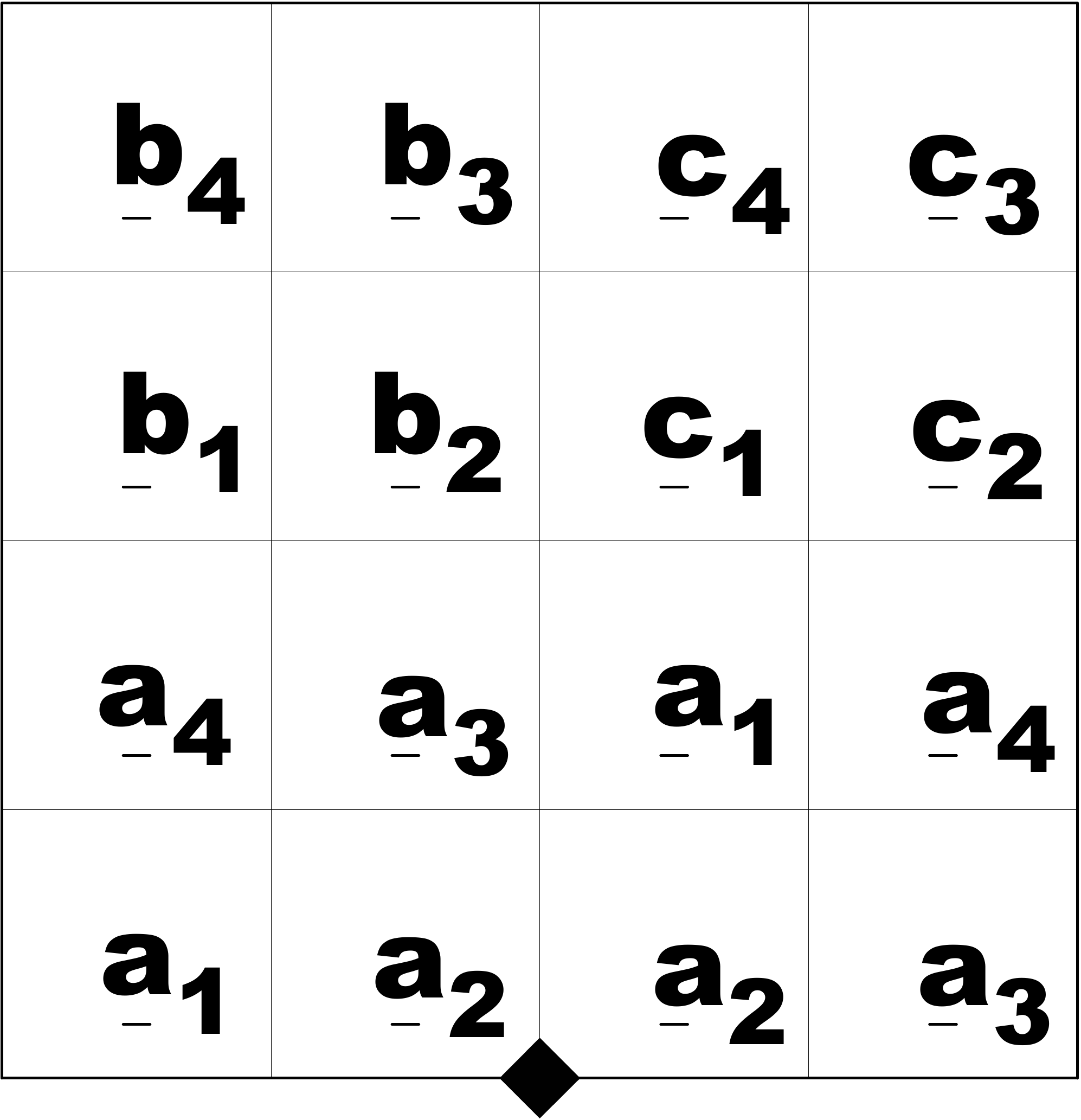}
  \vspace*{-8pt}
  \caption{}
  \label{fig:25}
\end{figure}

\begin{figure}[htb!]
  \centering
  \includegraphics[width=2.5659in]{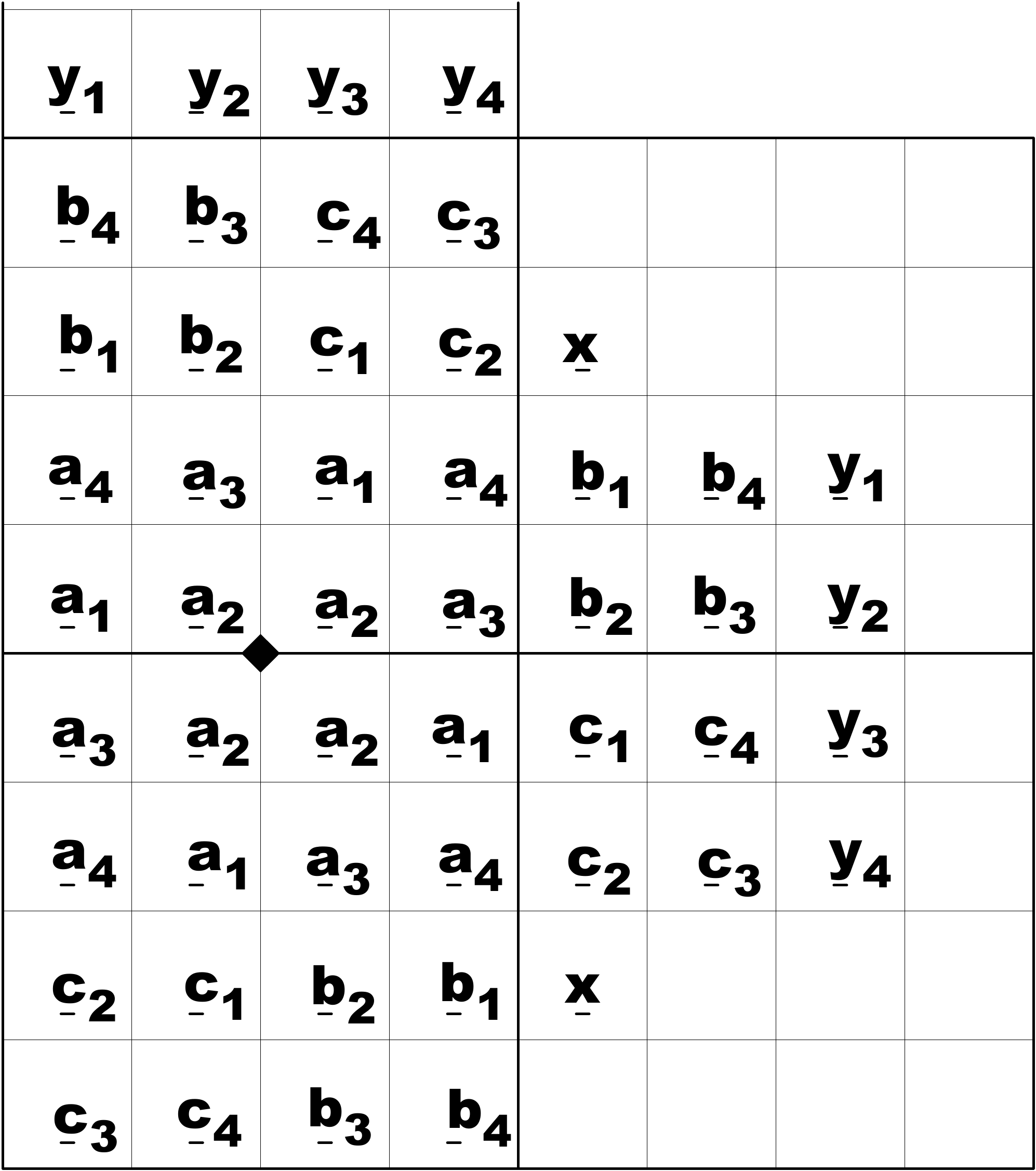}
  \vspace*{-4pt}
  \caption{}
  \label{fig:26}
\end{figure}

Figure \ref{fig:26} 
shows the rotation center of order $4$ in the middle of the common
edge of two tiles and two neighbor tiles. There are sixteen ways of choosing
these last tiles:
\begin{figure}[htb!]
  \centering
  \includegraphics[width=4.3414in]{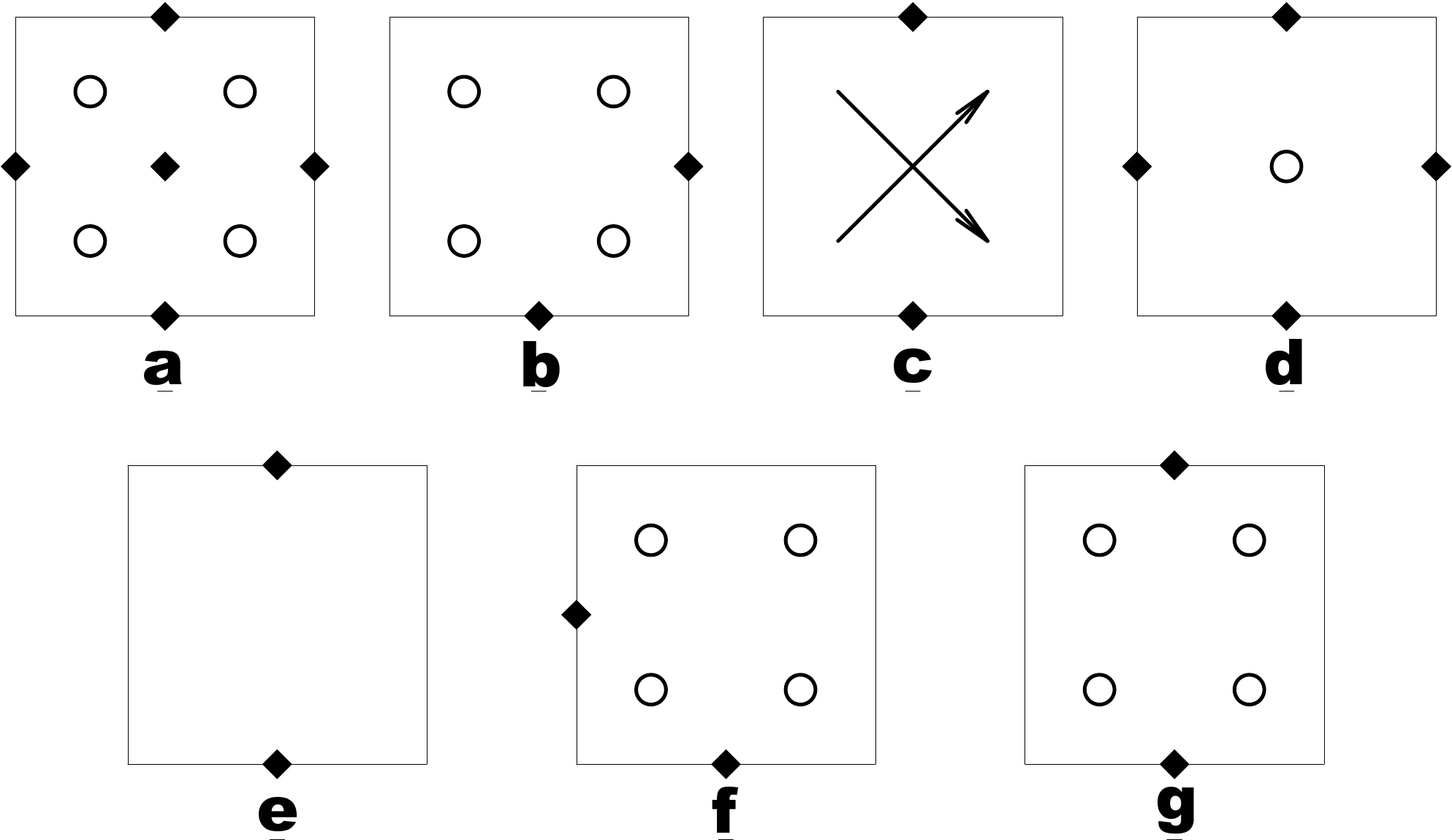}
  \vspace*{-8pt}
  \caption{}
  \label{fig:27}
\end{figure}
\begin{compactenum}[ a)]
\item In six of them 
\begin{align*}
a_1 &\equiv a_3\equiv b_2\equiv b_4\equiv c_1\equiv c_3\text{,}\\
a_2 &\equiv a_4\equiv b_1\equiv b_3\equiv c_2\equiv c_4\text{;}
\end{align*}
the tile is like it is shown in Figure \ref{fig:27}a;
\item in two of them 
\begin{align*}
a_1&\equiv a_3\equiv c_1\equiv c_3\text{,} \\
a_2&\equiv a_4\equiv c_2\equiv c_4\text{,} \\
b_1&\equiv b_3\text{,}\quad b_2\equiv b_4\text{;}
\end{align*}
the tile is like it is shown in Figure \ref{fig:27}b;
\item in one of them 
\begin{alignat*}{2}
a_1&\equiv b_4\equiv c_1\text{,}&\quad a_2&\equiv b_1\equiv c_2\text{,}\\
a_3&\equiv b_2\equiv c_3\text{,}&      a_4&\equiv b_3\equiv c_4\text{;}
\end{alignat*}
the tile is like it is shown in Figure \ref{fig:27}c;
\item in three of them 
\begin{alignat*}{2}
a_1&\equiv b_2\equiv c_3\text{,}&\quad a_2&\equiv b_3\equiv c_4\text{,}\\
a_3&\equiv b_4\equiv c_1\text{,}&      a_4&\equiv b_1\equiv c_2\text{;}
\end{alignat*}
the tile is like it is shown in Figure \ref{fig:27}d;
\item in one of them
\begin{equation*}
b_1\equiv c_2\text{,}\quad b_2\equiv c_3\text{,}\quad b_3\equiv c_4\text{,}\quad
b_4\equiv c_1\text{;}
\end{equation*}
the tile is like it is shown in Figure \ref{fig:27}e;
\item in two of them 
\begin{align*}
a_1&\equiv a_3\equiv b_2\equiv b_4\text{,}\\
a_2&\equiv a_4\equiv b_1\equiv b_3\text{,}\\
c_1&\equiv c_3\text{,}\quad c_2\equiv c_4\text{;}
\end{align*}
the tile is like it is shown in Figure \ref{fig:27}f;
\item in one of them 
\begin{gather*}
b_1\equiv b_3\equiv c_2\equiv c_4\text{,}\quad
b_2\equiv b_4\equiv c_1\equiv c_3\text{,}\\
y_1\equiv a_3\text{,}\quad
y_2\equiv y_3\equiv a_4\text{,}\quad
y_4\equiv a_1\text{;}
\end{gather*}
there are four possibilities for the $y$'s, but only one of them ($a_1\equiv
a_3$, $a_2\equiv a_4$) gives a different tile; it is like is shown in 
Figure \ref{fig:27}g.
\end{compactenum}

The tiles in Figure \ref{fig:27}a and \ref{fig:27}d can be obtained by
the general rule, making $p=2$ and $q=0$ in the first one, and $p=1$
and $q=1$ in the second one. The tile in Figure \ref{fig:27}e can be
obtained by the general rule, making $p=1$ and $q=0 $; it is of the
third type.

The tiles in Figure \ref{fig:27}b and \ref{fig:27}f are identical.

Hence, if we do not consider reflections, there are three exceptions to the
general rule, that are shown in Figures \ref{fig:13}--\ref{fig:15}.

Considering reflections, it is not difficult to see that there are the eight
possibilities represented in Figures \ref{fig:13}--\ref{fig:15}.

\newpage

\section*{Appendix: three patterns with Eduardo Nery tile}

\begin{figure}[H]
  \centering
  \includegraphics[width=5.7199in]{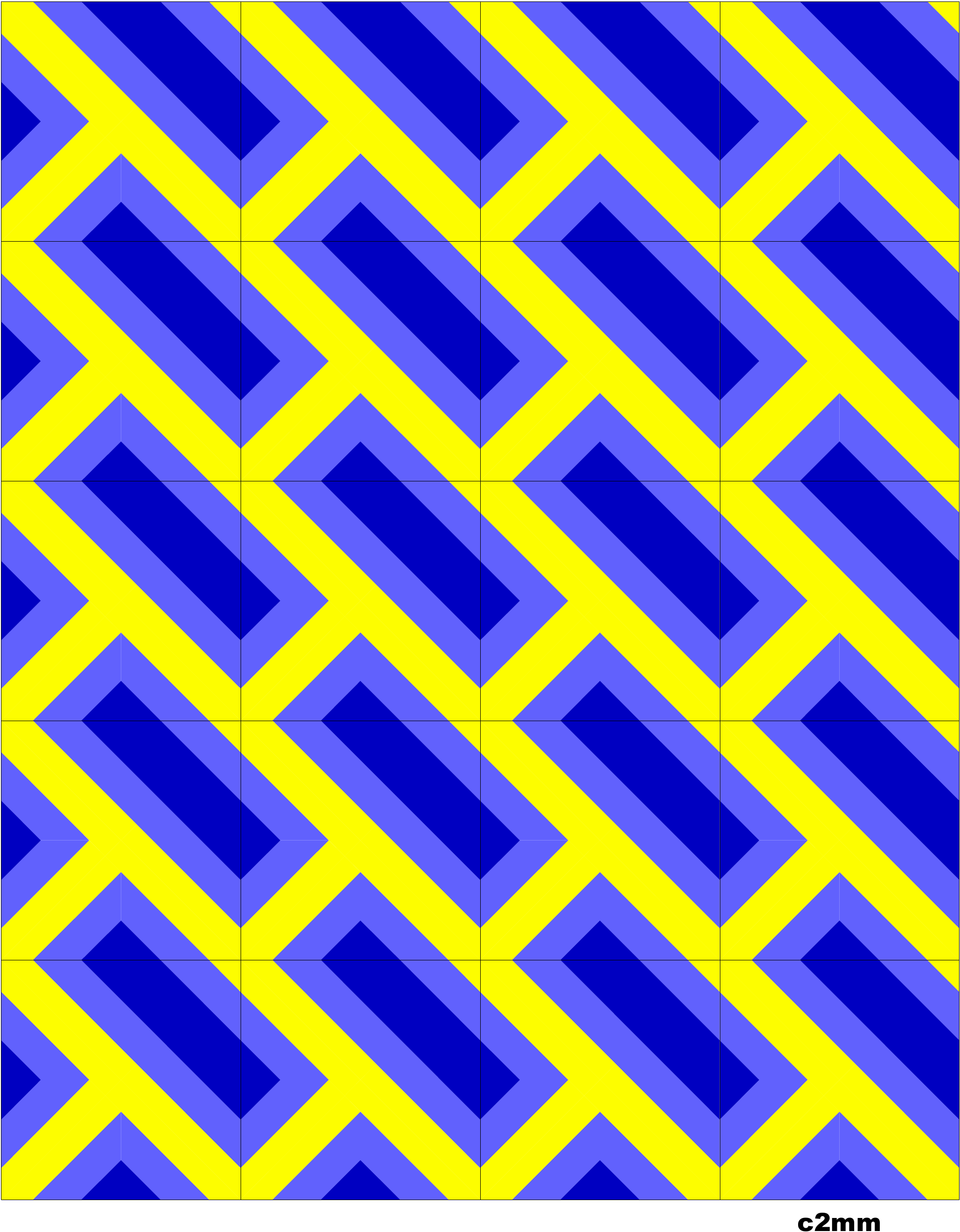}
  \vspace{-8pt}
  \caption{}
  \label{fig:28}
\end{figure}

\newpage

\begin{figure}[H]
  \centering
  \includegraphics[width=5.7199in]{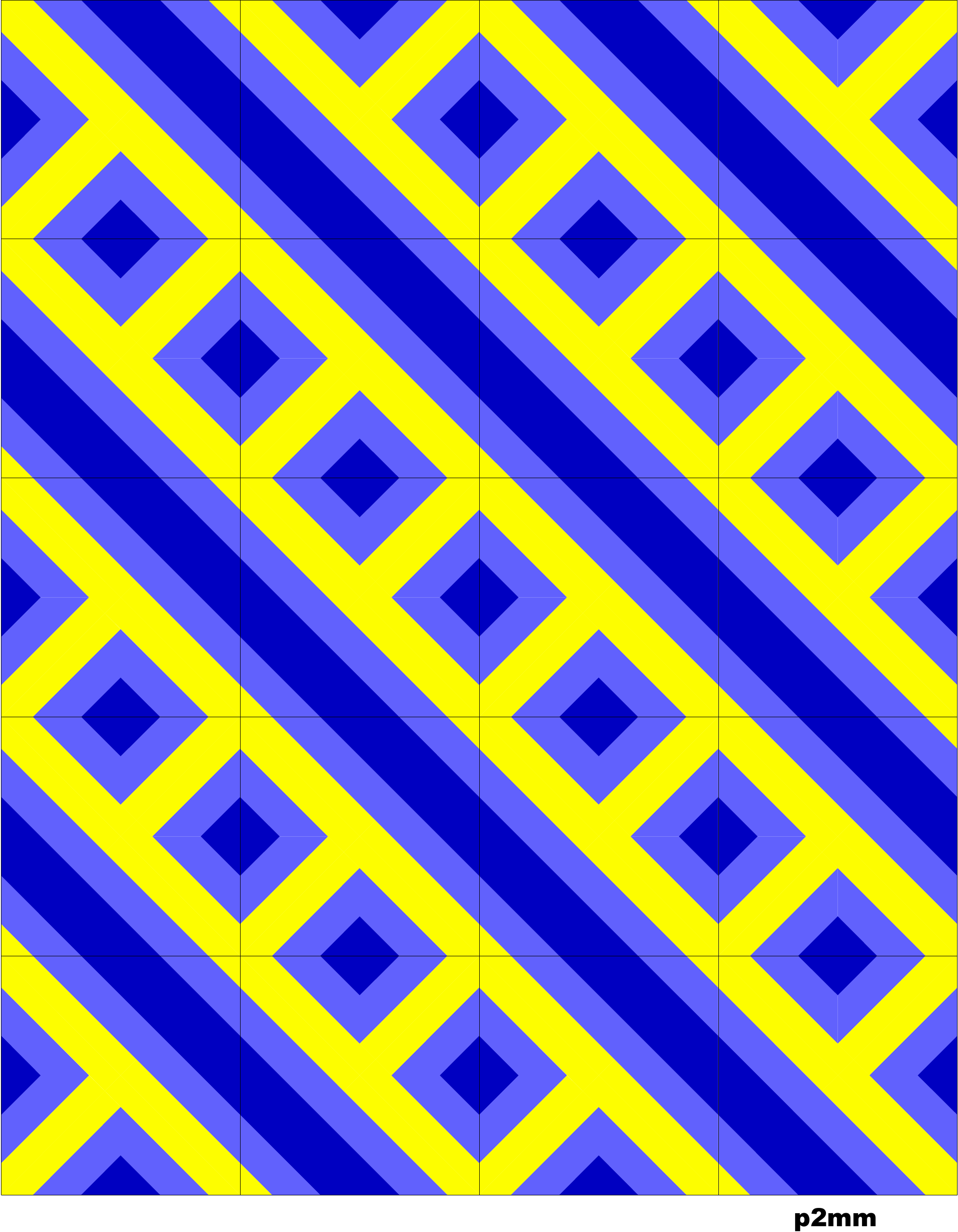}  
  \vspace{-8pt}
  \caption{}
  \label{fig:29}
\end{figure}

\newpage

\begin{figure}[H]
  \centering
  \includegraphics[width=5.7199in]{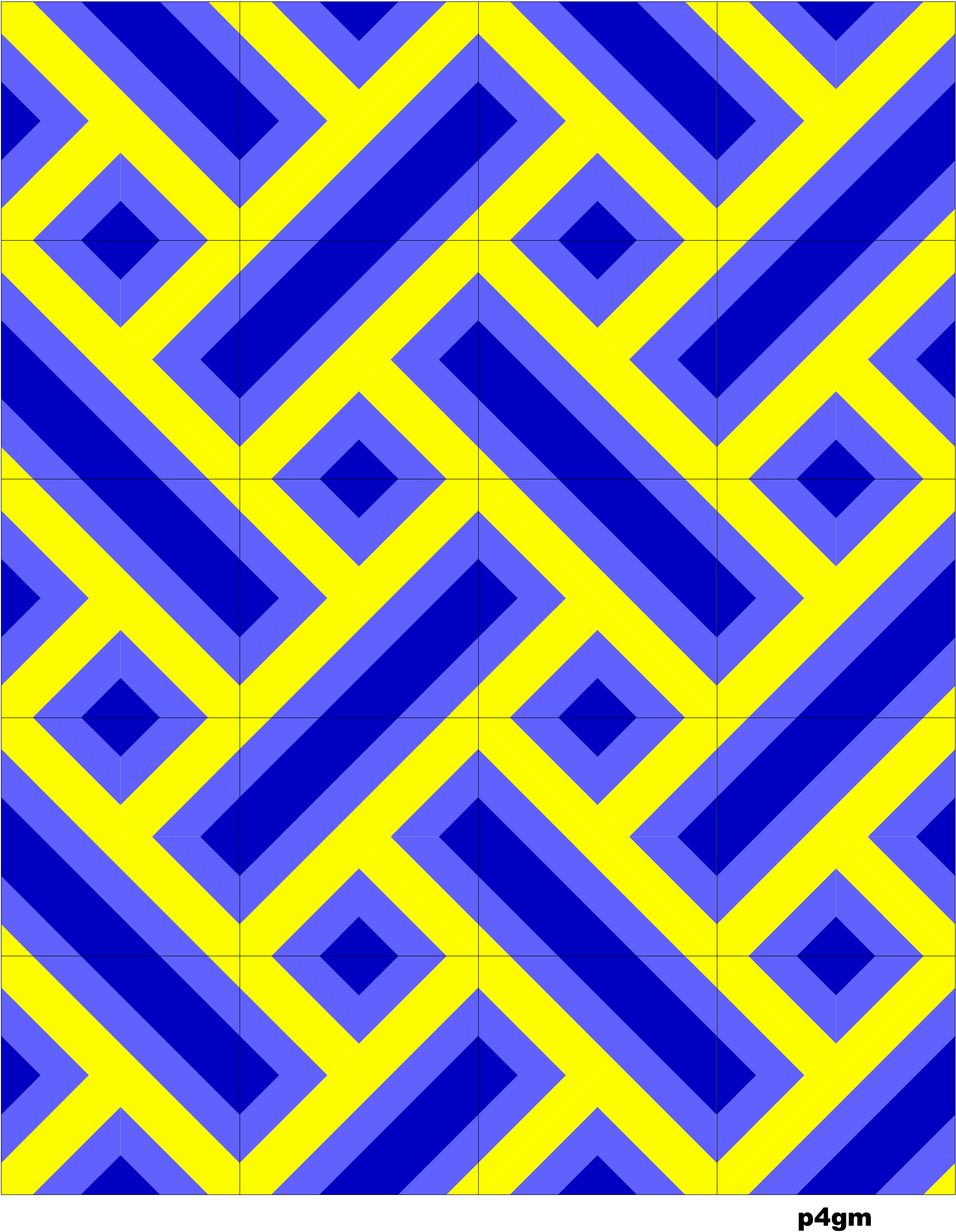}
  \vspace{-8pt}
  \caption{}
  \label{fig:30}
\end{figure}

\newpage

\end{document}